\documentclass[11pt,onecolumn]{article}

\usepackage{stackengine}
\usepackage{a4wide}
\usepackage{amsmath,amsthm,epsfig,amssymb,amsbsy}
\usepackage{enumerate}
\usepackage{comment}
\usepackage{algorithm}
\usepackage{algorithmic}
\usepackage{amsfonts}       
\usepackage{threeparttable}
\usepackage{dsfont}
\usepackage{enumitem}
\usepackage{amssymb}
\usepackage{tikz}
\usepackage{tikz-3dplot}
\usepackage{pgfplots}
\usetikzlibrary{backgrounds}
\usepackage{makecell}

\usepackage[
giveninits=true,
maxbibnames=9,
maxcitenames=2,
backend=biber,
bibstyle=numeric,
doi=false,isbn=false,url=false,
]{biblatex}
\renewbibmacro{in:}{
    \ifentrytype{article}
    {}
    {\printtext{\bibstring{in}\intitlepunct}}
    }
\addbibresource{references.bib}

\newenvironment{keywords}{\begin{paragraph}{Keywords:}
}
{
\end{paragraph}
}
    
\newenvironment{subclass}{\begin{paragraph}{AMS Subject Classification:}
}
{\end{paragraph}
}

\DeclareMathOperator*{\argmin}{arg\,min}

\DeclareMathOperator{\gph}{gph}

\DeclareMathOperator{\dom}{dom}

\DeclareMathOperator{\intr}{int}
\DeclareMathOperator{\bdry}{bdry}

\DeclareMathOperator{\ran}{rge}

\DeclareMathOperator{\id}{id}

\newcommand{\bR}{\mathbb{R}}

\newcommand{\bN}{\mathbb{N}}

\newcommand{\exR}{\overline{\mathbb{R}}}

\DeclareMathOperator{\gap}{gap}
\newcommand{\gapf}{\gap_g^\Phi}
\newcommand{\gapg}{\gap_f^\Phi}
\newcommand{\valf}{\mathcal{V}_F}

\newcommand{\cC}{\mathcal{C}}

\newcommand{\cI}{\mathcal{I}}
\newcommand{\cJ}{\mathcal{J}}

\newcommand\xrowht[2][0]{\addstackgap[.5\dimexpr#2\relax]{\vphantom{#1}}}


\makeatletter
\newcommand{\prox}[3][\@nil]{%
  \def\tmp{#1}%
   \ifx\tmp\@nnil
       \operatorname{prox}_{#3}^{#2}
    \else
         \operatorname{prox}_{#3}^{#1 \star #2}
    \fi}

\newcommand{\bprox}[3][\@nil]{%
  \def\tmp{#1}%
   \ifx\tmp\@nnil
       \operatorname{bprox}_{#3}^{#2}
    \else
        \operatorname{bprox}_{#3}^{#1 #2}
    \fi}
\makeatother

\newcommand{\phif}{\nabla_\Phi f}
\newcommand{\phibreg}{D_f^\Phi}

\usepackage{hyperref}
\hypersetup{
    colorlinks=true,
    linkcolor=blue,
    filecolor=magenta,
    citecolor =magenta,  
    urlcolor=magenta,
    pdftitle={Generalized DCA}
    }
\usepackage[capitalize]{cleveref}[0.19]

\crefname{section}{section}{sections}
\crefname{subsection}{subsection}{subsections}
\Crefname{section}{Section}{Sections}
\Crefname{subsection}{Subsection}{Subsections}

\Crefname{figure}{Figure}{Figures}

\crefformat{equation}{\textup{#2(#1)#3}}
\crefrangeformat{equation}{\textup{#3(#1)#4--#5(#2)#6}}
\crefmultiformat{equation}{\textup{#2(#1)#3}}{ and \textup{#2(#1)#3}}
{, \textup{#2(#1)#3}}{, and \textup{#2(#1)#3}}
\crefrangemultiformat{equation}{\textup{#3(#1)#4--#5(#2)#6}}%
{ and \textup{#3(#1)#4--#5(#2)#6}}{, \textup{#3(#1)#4--#5(#2)#6}}{, and \textup{#3(#1)#4--#5(#2)#6}}

\Crefformat{equation}{#2Equation~\textup{(#1)}#3}
\Crefrangeformat{equation}{Equations~\textup{#3(#1)#4--#5(#2)#6}}
\Crefmultiformat{equation}{Equations~\textup{#2(#1)#3}}{ and \textup{#2(#1)#3}}
{, \textup{#2(#1)#3}}{, and \textup{#2(#1)#3}}
\Crefrangemultiformat{equation}{Equations~\textup{#3(#1)#4--#5(#2)#6}}%
{ and \textup{#3(#1)#4--#5(#2)#6}}{, \textup{#3(#1)#4--#5(#2)#6}}{, and \textup{#3(#1)#4--#5(#2)#6}}

\newtheorem{theorem}{Theorem}[section]
\newlist{thmenum}{enumerate}{1} 
\setlist[thmenum]{label=(\roman*), ref=\theproposition(\roman*), font=\rm}
\crefalias{thmenumi}{theorem} 

\newtheorem{corollary}[theorem]{Corollary}
\newtheorem{lemma}[theorem]{Lemma}
\newlist{lemenum}{enumerate}{1} 
\setlist[lemenum]{label=(\roman*), ref=\theproposition(\roman*), font=\rm}
\crefalias{lemenumi}{lemma} 

\newtheorem{proposition}[theorem]{Proposition}
\newlist{propenum}{enumerate}{1} 
\setlist[propenum]{label=(\roman*), ref=\theproposition(\roman*), font=\rm}
\crefalias{propenumi}{proposition} 

\newlist{defenum}{enumerate}{1} 
\setlist[defenum]{label=(\roman*), ref=\thedefinition(\roman*), font=\rm}
\crefalias{defenumi}{definition} 

\newlist{corenum}{enumerate}{1} 
\setlist[corenum]{label=(\roman*), ref=\thedefinition(\roman*), font=\rm}
\crefalias{corenum}{corollary} 

\newtheorem{definition}[theorem]{Definition}

\newtheoremstyle{boldremark}
    {\dimexpr\topsep/2\relax} 
    {\dimexpr\topsep/2\relax} 
    {}          
    {}          
    {\bfseries} 
    {.}         
    {.5em}      
    {}          

\theoremstyle{boldremark}
\newtheorem{remark}[theorem]{Remark}
\theoremstyle{boldremark}
\newtheorem{example}[theorem]{Example}

\newlist{assumenum}{enumerate}{1} 
\setlist[assumenum]{leftmargin=2.1cm,label=(A\arabic*),font=\bfseries}
\creflabelformat{assumenumi}{#2#1#3}
\crefname{assumenumi}{assumption}{assumptions}
\Crefname{assumenumi}{Assumption}{Assumptions}

\usepackage{xcolor}
\newcommand{\change}[2]{#2}

\title{Forward-backward splitting
\\
under the light of generalized convexity}
\author{Konstantinos Oikonomidis\footnotemark[1]\thanks{KU Leuven,
		Department of Electrical Engineering (ESAT-STADIUS),
		Kasteelpark Arenberg 10, 3001 Leuven, Belgium~
		{\tt%
            \href{mailto:konstantinos.oikonomidis@esat.kuleuven.be}{\{konstantinos.oikonomidis,}%
			\href{mailto:panos.patrinos@esat.kuleuven.be}{panos.patrinos\}}%
			\href{mailto:konstantinos.oikonomidis@esat.kuleuven.be,panos.patrinos@esat.kuleuven.be}{@esat.kuleuven.be}%
		}
	} \and Emanuel Laude\footnotemark[2]\thanks{Proxima Fusion,
		Fl\"o{\ss}ergasse 2,
		81369 Munich, Germany~
		{\tt
        \href{mailto:elaude@proximafusion.com}{elaude@proximafusion.com}%
		}
	} \and Panagiotis Patrinos\footnotemark[1]}

\begin{document}

\maketitle
\begin{abstract}
In this paper we present a unifying framework for continuous optimization methods grounded in the concept of generalized convexity. Utilizing the powerful theory of $\Phi$-convexity, we propose a conceptual algorithm that extends the classical difference-of-convex method, encompassing a broad spectrum of optimization algorithms. Relying exclusively on the tools of generalized convexity we develop a gap function analysis that strictly characterizes the decrease of the function values, leading to simplified and unified convergence results. As an outcome of this analysis, we naturally obtain a generalized PL inequality which ensures $q$-linear convergence rates of the proposed method, incorporating various well-established conditions from the existing literature. Moreover we propose a $\Phi$-Bregman proximal point interpretation of the scheme that allows us to capture conditions that lead to sublinear rates under convexity.
\end{abstract}
\begin{keywords}
duality $\cdot$ generalized convexity $\cdot$ DC-programming
\end{keywords}
\begin{subclass}
65K05 $\cdot$ 49J52 $\cdot$ 90C30
\end{subclass}

\tableofcontents
\section{Introduction}
\subsection{Motivation}
A significant number of important optimization algorithms are generated by succesively minimizing upper bounds of the cost function, a principle often called majorization-minimization. This foundational approach underlies a wide array of methods, ranging from classical first-order methods such as the proximal gradient method \cite[Chapter 10]{beck2017first}, the Bregman proximal gradient method \cite{bolte2018first} and the difference-of-convex algorithm (DCA) \cite{horst1999dc}, to even higher-order methods such as the cubic regularization of Newton's method \cite{nesterov2006cubic}. It also extends to algorithms beyond traditional optimization, including Expectation Maximization and Sinkhorn's algorithm \cite{kunstner2021homeomorphic, aubin2022mirror, wang2024emparameterlearningframework}. 

While these algorithms are all rooted in the same majorization-minimization principle, their convergence analysis is often highly specific to the form of the upper bounds they employ. As a result, the analysis varies considerably across the literature, tailored to the unique characteristics of each method. This raises an intriguing question: Is it possible to create a unifying framework that not only generalizes the convergence analysis of these well-established methods but also facilitates the development of new algorithms without requiring significant modifications to the existing techniques?

In this paper we seek to address this question by leveraging the theory of $\Phi$-convexity, also known as $c$-concavity in the optimal transport literature. Originally introduced in an effort to transfer notions of convex analysis into nonlinear spaces (see \cite{bachir2017krein} and references therein), in the field of optimization theory $\Phi$-convexity has primarily been used in the context of eliminating duality gaps \cite{rockafellar1974augmented, penot1990strongly, bui2021zero}. Recently it has also been utilized in the context of learning \cite{blondel2022learning}. Following a line of work that explores optimization methods through the lens of of $\Phi$-convexity \cite{laude2023dualities, laude2022anisotropic}, we consider an extension of the classical difference-of-convex algorithm within the framework of generalized convexity. Parallel to our work, a unifying framework based on generalized convexity was also developed in \cite{léger2023gradient}, where an alternating minimization scheme was proposed. Our contribution is twofold:
\begin{enumerate}[label=(\roman*)]
    \item We study the difference of $\Phi$-convex problem, examining its optimality conditions and introducing the difference of $\Phi$-convex duality that generalizes the standard DC duality \cite{tao1997convex}. Building on the classical DCA, we introduce the $\Phi$-DCA and analyze its convergence properties using only the principles of (generalized) convexity: (generalized) subdifferentials and conjugates. We employ a novel approach that simultaneously considers both primal and dual regularized gap functions, enabling us to provide a strict characterization of function value decrease. We also introduce a generalized PL condition, which ensures the $q$-linear convergence rate of the proposed scheme. Moreover, we offer a $\Phi$-Bregman proximal point interpretation of our algorithm and establish sufficient conditions for achieving a sublinear rate of the function values in the convex regime.

    \item We demonstrate that numerous well-known algorithms can be viewed as specific instances of our generalized framework. We present examples that highlight the unifying properties of our proposed scheme across various aspects of analysis, including nonconvex subsequential convergence, sublinear rates under convexity, and $q$-linear convergence under the newly introduced generalized PL condition. Furthermore, our analysis leads to new sublinear rate guarantees for subclasses of the anisotropic proximal gradient algorithm \cite{laude2023dualities,laude2022anisotropic,oikonomidis2025nonlinearly}, while our linear convergence result \cref{lemma:apgm_linear:rate} improves upon the findings of \cite[Corollary 6.11]{laude2022anisotropic}.
\end{enumerate}

\subsection{Paper organization}
The remainder of this paper is organized as follows. \Cref{sec:gen_convexity} offers an overview of the notions of generalized convexity and conjugacy which provide valuable tools that are the building blocks of our analysis. In \cref{sec:prob_def} we describe the problem we study in this paper and present the difference of $\Phi$-convex duality as an extension of the classical difference of convex duality. In \cref{sec:alg} the proposed scheme is introduced and various examples of existent methods that can be considered instances of our generalized framework are provided. In \cref{sec:conv_analysis} the asymptotic convergence of the method is analyzed and the sublinear rate under convexity is studied, while in \cref{sec:linear_rate_phi_dca} a sufficient condition for $q$-linear convergence rates is stated and various examples are provided. Finally, \cref{sec:conclusion} concludes the paper.

\subsection{Notation and preliminaries}
We denote by $\langle \cdot, \cdot\rangle$ the standard Euclidean inner product on $\bR^n$ and by $\|\cdot\|:=\sqrt{\langle \cdot, \cdot \rangle}$ the standard Euclidean norm and the operator norm for matrices. The effective domain of an extended real-valued function $f : \bR^n \to \exR$ is denoted by $\dom f:=\{x\in\bR^n : f(x)<\infty\}$, and we say that $f$ is proper if $\dom f\neq\emptyset$ and $f(x) > -\infty$ for all $x \in \bR^n$; lower semicontinuous (lsc) if $f(\bar x)\leq\liminf_{x\to\bar x}f(x)$ for all $\bar x\in\bR^n$. We define by $\Gamma_0(\bR^n)$ the class of all proper, lsc convex functions $f:\bR^n \to \exR$, with $\Gamma_L(\bR^n)$ the ones that are $L$-weakly convex and with $\mathcal{C}^k(\bR^n)$ the ones which are $k$ times continuously differentiable. For a proper function $f :\bR^n \to \exR$ and $\lambda \geq 0$ we define the epi-scaling $(\lambda \star f)(x) = \lambda f(\lambda^{-1} x)$ for $\lambda > 0$ and $(\lambda \star f)(x)=\delta_{\{0\}}(x)$ otherwise. The set-valued mappings $\widehat{\partial}f, \partial f: \bR^n \rightrightarrows \bR^n$ are the regular and the limiting subdifferential of $f$, where $\bar v \in \widehat{\partial}f(\bar x)$ if $\liminf_{\substack{x\to\bar x \\ x \neq \bar x}} \frac{f(x)-f(\bar x) - \langle \bar v,x-\bar x\rangle}{\|x-\bar x\|} \geq 0$, while $\bar v \in \partial f(\bar x)$ if $\bar x \in \dom f$ and there exists a sequence $(x^k, v^k) \to (\bar x, \bar v)$, with $v^k \in \widehat{\partial}f(x^k)$ such that $f(x^k) \to f(\bar x)$. If $f \in \Gamma_0(\bR^n)$, the limiting subdifferential agrees with the one from convex analysis. We adopt the notions of essential smoothness, essential strict convexity and Legendre type functions from \cite[Section 26]{Roc70}: We say that a function $f \in \Gamma_0(\bR^n)$ is \emph{essentially smooth}, if $\intr(\dom f) \neq \emptyset$ and $f$ is differentiable on $\intr(\dom f)$ such that $\|\nabla f(x^\nu)\|\to \infty$, whenever $\intr(\dom f) \ni x^\nu \to x \in \bdry\dom f$, and \emph{essentially strictly convex}, if $f$ is strictly convex on every convex subset of $\dom \partial f$, and \emph{Legendre type}, if $f$ is both essentially smooth and essentially strictly convex.
Let $F: \bR^n \rightrightarrows \bR^n$ be a set-valued mapping. We define its domain $\dom F := \{x \in \bR^n: F(x) \neq \emptyset\}$
and its range $\ran F := \{u \in \bR^n: u \in F(x) \text{ for some } x \in \bR^n\}$. $F$ is locally bounded at a point $\bar x \in \bR^n$ if for some neighborhood $V$ of $\bar x$ the set $F(V) \subseteq \bR^n$ is bounded. It is called locally bounded on $\bR^n$ if this holds for every $\bar x \in \bR^n$. For a function $f$ defined on a nonempty, convex and open set $C \subseteq \bR^n$ that is proper, lsc and convex and $\mathcal{C}^1$ on $\intr \dom f = C$, we define $D_f(x,y) = f(x) - f(y) - \langle \nabla f(y),x-y \rangle$ if $x \in \dom f$, $y \in \intr \dom f$ and $+\infty$ otherwise, called the Bregman divergence generated by $f$. We denote by $\bN_0 := \bN \cup \{0\}$.
Otherwise we adopt the notation from \cite{RoWe98}. 

\section{\texorpdfstring{$\Phi$}{Φ}-convexity}
\label{sec:gen_convexity}
In this section we recapitulate the existing notions of $\Phi$-convexity and $\Phi$-conjugacy \cite{moreau1970inf} which are used heavily as tools in the remainder of the manuscript. Originating as a generalization of convexity to nonlinear spaces, these notions have since appeared in the context of eliminating duality gaps in nonconvex and nonsmooth optimization \cite{rockafellar1974augmented,penot1990strongly,Vil08,bauermeister2021lifting} and optimal transport theory \cite{Vil08}.

\begin{definition}[$\Phi$-convex and $\Phi$-concave functions] \label{def:phi_cvx}
Let $X$ and $Y$ be nonempty sets and $\Phi: X \times Y \to \bR$ a real-valued coupling. Let $f : X \to \exR$ and $g: Y \to \exR$.
We say that $f$ is $\Phi$-convex on $X$ if there is an index set $\cI$ and parameters $(y_i, \beta_i) \in Y \times \exR$ for $i \in \cI$ such that
\begin{align}
f(x) = \sup_{i \in \cI} \Phi(x, y_i) - \beta_i\quad \forall x \in X.
\end{align}
When $\cI =\emptyset$ we define $f \equiv -\infty$. Likewise we say that $g$ is $\Phi$-convex on $Y$ if there is an index set $\cJ$ and parameters $(x_j, \alpha_j) \in X \times \exR$ for $j \in \cJ$ such that
\begin{align}
g(y) = \sup_{j \in \cJ} \Phi(x_j, y) - \alpha_j \quad \forall y \in Y.
\end{align}
When $\cI =\emptyset$ we define $g \equiv -\infty$. We say that $f$ or $g$ is $\Phi$-concave if $-f$ or $-g$ is $\change{\Phi}{(-\Phi)}$-convex.
\end{definition}
\begin{figure}[!h] 
    \centering
    \includegraphics[]{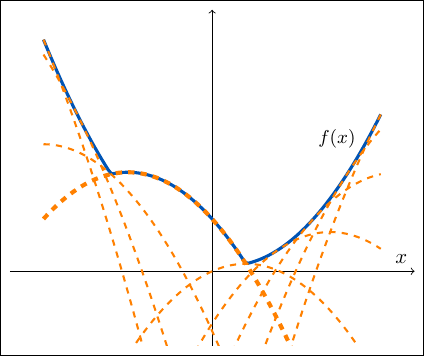}
    \caption{Illustration of the nonconvex function $f(x) = \max\{x^2 - x + 1, -x^2-5x+7\}$} which is $\Phi$-convex for $\Phi(x,y) = -(x-y)^2$, along with its $\Phi$-minorants at various points.\label{fig:phi_conv}
\end{figure}
If $X=Y$ are indistinguishable and $\Phi$ is not symmetric we shall refer to $f$ as left $\Phi$-convex and $g$ as right $\Phi$-convex. Note that in the bibliography $\Phi$-convex functions appear also as $\Phi$-envelopes. In light of \cite[Theorem 8.13]{RoWe98}, any proper, lsc and convex function $h:\bR^n \rightarrow \exR$ is the pointwise supremum over a family of affine functions:
\begin{equation}
    h(x) = \sup_{i \in \mathcal{I}}\langle x, y_i\rangle - \beta_i.
\end{equation}
Therefore, using the coupling $\Phi = \langle \cdot,\cdot \rangle$ and identifying $X, Y$ with $\bR^n$, one recovers from $\Phi$-convex functions the class of proper, lsc and convex functions and $(-\Phi)$-concavity coincides with the classical notion of concavity. An example of a nonconvex function that is yet $\Phi$-convex is provided in \cref{fig:phi_conv}. Note that the minorants of the function in this case are nonlinear functions, in contrast to classical convexity.

We also present the notion of a $\Phi$-conjugate and a $\Phi$-biconjugate function:
\begin{definition}[$\Phi$-conjugate functions]
Let $X$ and $Y$ be nonempty sets and $\Phi: X \times Y \to \bR$ a real-valued coupling. Let $f: X \to \exR$. Then we define
\begin{align}
f^\Phi(y)=\sup_{x \in X} \Phi(x, y) - f(x),
\end{align}
as the $\Phi$-conjugate of $f$ on $Y$ and 
\begin{align}
f^{\Phi\Phi}(x)=\sup_{y \in Y} \Phi(x, y) - f^\Phi(y),
\end{align}
as the $\Phi$-biconjugate back on $X$.
The definitions of $g^\Phi$ and $g^{\Phi\Phi}$ for $g:Y \to \exR$ are parallel.
\end{definition}
Equivalently to the $\Phi$-convexity definition, if $X=Y$ are indistinguishable and $\Phi$ is not symmetric we shall refer to $f^\Phi$ as the left $\Phi$-conjugate and $g^\Phi$ as the right $\Phi$-conjugate.
From the definition it is clear that $f^\Phi$ is $\Phi$-convex on $Y$ and $f^{\Phi\Phi}$ is $\Phi$-convex back on $X$. Notice that when $X$ is a Banach space and $X^*$ its dual, by considering the duality pairing $\langle \cdot, \cdot \rangle_{X^* \times X}$ as a coupling, we obtain the classical convex conjugate definition. An interesting example of $\Phi$-conjugacy is explored in \cite[Example 11.66]{RoWe98}. Let $\Phi(x,(y,r)) = \langle y,x \rangle - \tfrac{r}{2}\|x\|^2$ and identify $X$ with $\bR^n$ and $Y$ with $\bR^n \times \bR$. In this case, we have $f^\Phi(y,r) = (f+\tfrac{r}{2}\|\cdot\|^2)^*(y)$ and if $f$ is prox-bounded, $f^\Phi$ is proper, lsc and convex on $Y$.

We also adopt from \cite{balder1977extension} the definition of the $\varepsilon$-$\Phi$-subgradients, which is a generalization of the classical notion of $\varepsilon$-subgradients:

\begin{definition}[$\varepsilon$-$\Phi$-subgradients] \label{def:phi_subg}
Let $X$ and $Y$ be nonempty sets and $\Phi: X \times Y \to \bR$ a real-valued coupling. Let $f: X \to \exR$ and $\varepsilon \geq 0$. Then we say that $y$ is a $\varepsilon$-$\Phi$-subgradient of $f$ at $\bar x$ if
\begin{equation} \label{eq:phi_subgradient_ineq}
    f(x) \geq f(\bar x) + \Phi(x, y) - \Phi(\bar x, y) - \varepsilon,
\end{equation}
for all $x \in X$.

We denote by $\partial_\Phi^\varepsilon f(\bar x)$ the set of all $\varepsilon$-$\Phi$-subgradients of $f$ at a point $\bar{x} \in X$.

If $\varepsilon =0$ we omit the superscript $\varepsilon$ and refer to $\partial_\Phi f(\bar x)$ as the $\Phi$-subdifferential of $f$ at $\bar x$ and $y \in \partial_\Phi f(\bar x)$ as a $\Phi$-subgradient of $f$ at $\bar x$.

If $\partial_\Phi f(\bar x) = \{y\}$ is a singleton we adopt the notation $\nabla_\Phi f(\bar x) = y$.
\end{definition}
If $\Phi(x,y)=\langle x,y \rangle$, the above defintion coincides with the classical notion of $\varepsilon$-subgradients of convex functions.

The following statement is a generalization of the Fenchel--Young theorem. It is standard in literature; see \cite{balder1977extension,dolecki1978convexity} and references therein. 
\begin{proposition} \label{thm:phi_envelope}
Let $X$ and $Y$ be nonempty sets, $\Phi: X \times Y \to \bR$ a real-valued coupling and $f:X\to \exR$. Then we have
\begin{propenum}
\item \label{thm:phi_envelope:phi_convex} $f^\Phi$ is $\Phi$-convex on $Y$ and $f^{\Phi\Phi}$ is $\Phi$-convex on X;
\item \label{thm:phi_envelope:fenchel_young} $f(x) + f^\Phi(y) \geq \Phi(x, y) \quad \forall x \in X, \quad \forall y \in Y$;
\item \label{thm:phi_envelope:phi_biconj} $f(x) \geq f^{\Phi\Phi}(x) \quad \forall x \in X$.
\end{propenum}
In addition, $f^{\Phi\Phi}$ is the pointwise largest $\Phi$-convex function below $f$. In particular, this means that $f$ is $\Phi$-convex on $X$ if and only if $f(x) = f^{\Phi\Phi}(x)$ for all $x \in X$.

The statements for $g:Y \to \exR$ are parallel.
\end{proposition}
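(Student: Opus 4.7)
The plan is to read parts \cref{thm:phi_envelope:phi_convex}--\cref{thm:phi_envelope:phi_biconj} directly off the supremum definitions of $f^\Phi$ and $f^{\Phi\Phi}$ in sequence, and then prove the maximality claim; the stated equivalence at the end follows as a one-line consequence. First I would verify \cref{thm:phi_envelope:phi_convex} by observing that $f^\Phi(y) = \sup_{x \in X} \Phi(x, y) - f(x)$ is already in the form prescribed by \cref{def:phi_cvx}, with index set $\mathcal{J} := \{x \in X : f(x) < \infty\}$ and parameters $\beta_x := f(x)$ for $x \in \mathcal{J}$; points with $f(x) = +\infty$ contribute $-\infty$ to the sup and can be discarded, and the degenerate case $\dom f = \emptyset$ falls under the $f \equiv -\infty$ convention. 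Applying the same observation one level up yields $\Phi$-convexity of $f^{\Phi\Phi}$ on $X$.

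Next, \cref{thm:phi_envelope:fenchel_young} is the one-point instance of the sup defining $f^\Phi$: for any fixed $x, y$ one has $f^\Phi(y) \geq \Phi(x,y) - f(x)$, which rearranges to the Fenchel--Young inequality. Then \cref{thm:phi_envelope:phi_biconj} comes from rewriting \cref{thm:phi_envelope:fenchel_young} as $\Phi(x,y) - f^\Phi(y) \leq f(x)$ and taking the supremum in $y$ on the left. For the maximality claim I would take an arbitrary $\Phi$-convex minorant $h(x) = \sup_{i \in \mathcal{I}} \Phi(x, y_i) - \beta_i$ with $h \leq f$, fix $i$, and use the pointwise bound $\Phi(x, y_i) - \beta_i \leq f(x)$ to take the supremum over $x$, obtaining $f^\Phi(y_i) \leq \beta_i$. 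This upgrades each slice to $\Phi(x, y_i) - \beta_i \leq \Phi(x, y_i) - f^\Phi(y_i) \leq f^{\Phi\Phi}(x)$, and the supremum over $i$ yields $h \leq f^{\Phi\Phi}$, i.e., $f^{\Phi\Phi}$ dominates every $\Phi$-convex minorant of $f$.

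The equivalence in the last sentence is then immediate: by \cref{thm:phi_envelope:phi_convex} the biconjugate $f^{\Phi\Phi}$ is always $\Phi$-convex, so $f = f^{\Phi\Phi}$ forces $f$ to be $\Phi$-convex; conversely, if $f$ is $\Phi$-convex then $f$ is a $\Phi$-convex minorant of itself and hence satisfies $f \leq f^{\Phi\Phi}$, which combined with \cref{thm:phi_envelope:phi_biconj} gives equality. There is no substantial obstacle, every step is a one-line manipulation of suprema; the only care needed is the bookkeeping for the empty-index convention in \cref{def:phi_cvx}, and the parallel statements for $g : Y \to \exR$ follow by an entirely symmetric argument with the roles of $X$ and $Y$ swapped.
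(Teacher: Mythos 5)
Your proof is correct. The paper itself offers no proof of \cref{thm:phi_envelope}, delegating it to the literature as a standard result, and your argument is precisely the standard one: each of \cref{thm:phi_envelope:phi_convex}--\cref{thm:phi_envelope:phi_biconj} read off the defining suprema, the maximality of $f^{\Phi\Phi}$ via the slice-wise bound $f^\Phi(y_i)\leq\beta_i$, and the characterization of $\Phi$-convexity as a two-line corollary. Your handling of the empty-index and $+\infty$ conventions is also consistent with \cref{def:phi_cvx}.
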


The following proposition relates $\Phi$-subgradients with minimization problems and is of major importance for the analysis of the proposed framework. It can be found in \cite[Lemma 2.6]{laude2023dualities}.
\begin{proposition} \label{thm:phi_subgradients}
    Let $X$ and $Y$ be nonempty sets, $\Phi: X \times Y \to \bR$ a real-valued coupling and  $f: X \to \exR$. Then for any $\bar x \in X$ and $\bar y \in Y$ the following statements are equivalent:
    \begin{propenum}
\item $\bar y \in \partial_\Phi f(\bar x)$; \label{thm:phi_subgradients:y_subg}
\item $ f(\bar x) + f^\Phi(\bar y) = \Phi(\bar x, \bar y)$;
\item $\bar x \in \argmin_{x \in X} f(x) - \Phi(x, \bar y)$; \label{thm:phi_subgradients:x_argmin}
\end{propenum}
where any of the above equivalent statements implies that $f^{\Phi\Phi}(\bar x)= f(\bar x)$ and $\bar x \in \partial_\Phi f^\Phi(\bar y)$.
In particular, if $f$ is $\Phi$-convex $\bar y \in \partial_\Phi f(\bar x) \Leftrightarrow \bar x \in \partial_\Phi f^\Phi(\bar y)$.
\end{proposition}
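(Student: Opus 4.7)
The plan is to establish the three equivalences by a short chain that unpacks definitions, and then to derive the two consequences directly from the $\Phi$-Fenchel--Young and biconjugacy inequalities of \cref{thm:phi_envelope}. First, $(i) \Leftrightarrow (iii)$ is essentially tautological: the $\Phi$-subgradient inequality \eqref{eq:phi_subgradient_ineq} with $\varepsilon = 0$,
\[
f(x) \;\geq\; f(\bar x) + \Phi(x,\bar y) - \Phi(\bar x,\bar y) \qquad \forall x \in X,
\]
is equivalent, after transposing the coupling terms, to $f(x) - \Phi(x,\bar y) \geq f(\bar x) - \Phi(\bar x,\bar y)$ for all $x \in X$, which is precisely $\bar x \in \argmin_{x \in X}\{f(x) - \Phi(x,\bar y)\}$. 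For $(ii) \Leftrightarrow (iii)$, the definition of the $\Phi$-conjugate gives $\inf_{x \in X}\{f(x) - \Phi(x,\bar y)\} = -f^\Phi(\bar y)$, so $\bar x$ attains this infimum if and only if $f(\bar x) - \Phi(\bar x,\bar y) = -f^\Phi(\bar y)$, which rearranges to (ii).

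Assuming (ii), the identity $f^{\Phi\Phi}(\bar x) = f(\bar x)$ then follows by a sandwich argument: the pointwise-supremum definition of the biconjugate yields $f^{\Phi\Phi}(\bar x) \geq \Phi(\bar x,\bar y) - f^\Phi(\bar y) = f(\bar x)$, while the opposite inequality $f \geq f^{\Phi\Phi}$ is \cref{thm:phi_envelope:phi_biconj}. To obtain $\bar x \in \partial_\Phi f^\Phi(\bar y)$, I would apply $\Phi$-Fenchel--Young (\cref{thm:phi_envelope:fenchel_young}) at an arbitrary $y \in Y$, getting $f^\Phi(y) + f(\bar x) \geq \Phi(\bar x, y)$, and then substitute $f(\bar x) = \Phi(\bar x,\bar y) - f^\Phi(\bar y)$ via (ii); what remains is exactly the $\Phi$-subgradient inequality for $f^\Phi$ at $\bar y$ with subgradient $\bar x$.

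For the symmetric equivalence under $\Phi$-convexity of $f$, one direction is contained in the consequences above. For the converse, since $f^\Phi$ is itself $\Phi$-convex on $Y$ by \cref{thm:phi_envelope:phi_convex}, I would invoke the implication $(i) \Rightarrow (ii)$ already proved, applied with the roles of $X$ and $Y$ swapped and $f^\Phi$ in the role of $f$: this turns $\bar x \in \partial_\Phi f^\Phi(\bar y)$ into $f^\Phi(\bar y) + f^{\Phi\Phi}(\bar x) = \Phi(\bar x,\bar y)$, and using $f = f^{\Phi\Phi}$ recovers (ii), hence (i). The whole argument is essentially routine; the only real delicacy is the bookkeeping of the left/right $\Phi$-convex distinction when $\Phi$ is not symmetric, which is what legitimates applying the $Y$-side counterpart of the proposition to $f^\Phi$.
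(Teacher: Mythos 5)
Your proof is correct, and all steps check out: the equivalences (i)$\Leftrightarrow$(iii)$\Leftrightarrow$(ii) follow by unpacking the subgradient inequality and the definition of $f^\Phi$, the two consequences follow from biconjugate maximality and $\Phi$-Fenchel--Young, and the swap of roles for the converse under $\Phi$-convexity is legitimate since $f^\Phi$ is $\Phi$-convex on $Y$ and $(f^\Phi)^\Phi=f^{\Phi\Phi}=f$. The paper itself gives no proof, deferring to \cite[Lemma 2.6]{laude2023dualities}, and your argument is exactly the standard one that reference follows, so there is nothing substantive to contrast.
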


An important notion that will be applied to $\Phi$-subdifferentials is that of a \emph{selection} of a set valued mapping. We next provide its definition from \cite[p. 49]{dontchev2009implicit}:
\begin{definition}[selection]
    Given a set-valued mapping $F: \bR^n \to \bR^m$ and a set $D \subseteq \dom F$, a function $w: \bR^n \to \bR^m$ is said to be a selection of $F$ on $D$ if $\dom w \supseteq D$ and $w(x) \in F(x)$ for all $x \in D$. If moreover $w$ is continuous, it is called a continuous selection.
\end{definition}
\begin{example}[quadratic coupling] \label{example:quadratic}
    A coupling function that is standard in the related literature is $\Phi(x,y) = -\tfrac{L}{2}\|x-y\|^2$ for $L > 0$. For this coupling, in light of \cite[Proposition 3.4]{laude2023dualities}, any $f \in \Gamma_L(\bR^n)$ is a $\Phi$-convex function, while its $\Phi$-subdifferential is given by $\partial_\Phi f(x) = x + \tfrac{1}{L}\partial f(x)$ for any $x \in \dom f$. Note that if $w: \bR^n \to \bR^n$ is a continuous selection of $\partial f$ on $\bR^n$, then $x \mapsto x + \tfrac{1}{L}w(x)$ is a continuous selection of $\partial_\Phi f$ on $\bR^n$. The $\Phi$-conjugate of a function $g:\bR^n \to \exR$ is given in this setting by $g^\Phi(y) = \max_{x \in \bR^n}\Phi(x,y) - g(x) = -\min_{x \in \bR^n}g(x) + \tfrac{L}{2}\|x-y\|^2$, which is the negative Moreau envelope of the function $g$ \cite[Definition 1.22]{RoWe98}. An example of a nonconvex function that is $\Phi$-convex for $L=1$ is given in \cref{fig:phi_conv}.
\end{example}

\section{Difference of \texorpdfstring{$\Phi$}{Φ}-convex problems}
\label{sec:prob_def}
\subsection{Problem definition}
In this section we define the problem that we study in this paper. It can be considered as a generalization of the classical \emph{difference-of-convex} (DC) problem where convexity is replaced by the more general notion of $\Phi$-convexity that was presented in the previous section.
The problem formulation that we consider is the following
\begin{equation} \label{eq:opt} \tag{P}
    \min_{x \in X}~F(x):=g(x) - f(x),
\end{equation}
where $\Phi:X \times Y \to \bR$ is continuous and the functions $f:X \to \exR$ and $g:X \to \exR$ are possibly extended-valued. Overall we have the following requirements
\begin{assumenum}
    \item \label{assum:xy} $X \subseteq \bR^n$ and $Y \subseteq \bR^m$ are nonempty.
    \item \label{assum:phi} $\Phi$ is jointly continuous relative to $X\times Y$.
    \item \label{assum:f} $f:X \to \exR$ is proper and $\Phi$-convex on $X$.
    \item \label{assum:g} $g: X \to \exR$ is proper and lower semi-continuous relative to $X$ and there exists $y\in Y$ and $\beta \in \bR$ such that $g \geq \Phi(\cdot, y) -\beta$.
    \item \label{assum:lower_bound} $\alpha = \inf_{x \in X}F < \infty$.
\end{assumenum}
While we assume that $f$ is $\Phi$-convex we do not require $g$ to be $\Phi$-convex in which case \cref{eq:opt} does not amount to a true difference of $\Phi$-convex problem.
Nevertheless, \cref{eq:opt} is equivalent (in terms of infima) to a true difference of $\Phi$-convex problem obtained by a certain double-min duality explored in the next subsection.

\subsection{Difference of \texorpdfstring{$\Phi$}{Φ}-convex duality}
\label{subsec:duality}
A powerful and frequently utilized concept in DC analysis is the double-min duality, commonly referred to as DC-duality \cite{toland1978duality,mordukhovich2022convex}. This principle asserts that the infimum of the original optimization problem is equivalent to the infimum of its corresponding DC-dual problem, i.e. in the case where $f, g$ are proper, convex and lsc we have that $\inf_{x \in X}g(x)-f(x) = \inf_{y \in Y}f^*(y)-g^*(y)$. This type of duality extends beyond traditional settings and is also applicable in the more abstract framework considered in this paper: First, note that by $\Phi$-convexity of $f$ on $X$ invoking \cref{thm:phi_envelope:phi_convex} we have the identity 
\begin{equation}
    -f(x) = -f^{\Phi\Phi}(x)=\inf_{y \in Y} -\Phi(x,y) + f^\Phi(y),
\end{equation}
and thus we can equivalently rewrite the problem as
\begin{align} \nonumber
\inf_{x \in X}~g(x) - f(x)&= \inf_{x \in X}~g(x)-f^{\Phi\Phi}(x) 
\\
&=\inf_{y \in Y} \inf_{x \in X} ~g(x) -\Phi(x,y) + f^\Phi(y)  \nonumber
\\
&=\inf_{y \in Y} ~f^\Phi(y) - g^\Phi(y) 
\\
&=\inf_{y \in Y}~G(y)\tag{D} \label{eq:dual},
\end{align}
where the second equality follows by interchanging the order of minimization \cite[Proposition 1.35]{RoWe98}. Note that we adopt extended arithmetics if necessary, i.e., $g(x) - f(x) = +\infty$ if both $g(x) = \infty$ and $f(x)=\infty$.
By virtue of \cref{thm:phi_envelope:phi_convex} $f^\Phi$ and $g^\Phi$ are both $\Phi$-convex on $Y$ and hence \cref{eq:dual} amounts to a true difference of $\Phi$-convex problem whose infimum coincides with the one of \cref{eq:opt}.
In particular note that \cref{assum:phi,assum:g} imply the properness and lower semi-continuity of $g^\Phi$ and thus the DC-dual problem complies with the same assumptions. 

The dual function $G$ is in general a lower bound on $F$ along the $\Phi$-subdifferential of $f$. This is captured in the following proposition:
\begin{proposition} \label{thm:lower_bound_dual}
    Let $x \in X$ be such that $\partial_\Phi f(x)$ is nonempty and consider $y \in \partial_\Phi f(x)$. Then, 
    \begin{equation}
        G(y) \leq F(x)
    \end{equation}
\end{proposition}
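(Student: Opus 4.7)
The plan is to exploit the two ingredients from \cref{thm:phi_envelope,thm:phi_subgradients}: the Fenchel--Young equality that characterizes $\Phi$-subgradients of $f$, and the Fenchel--Young inequality applied to $g$. Writing out $G(y) = f^\Phi(y) - g^\Phi(y)$, my aim is to replace $f^\Phi(y)$ by an equality and bound $-g^\Phi(y)$ from above in terms of $g(x)$ and $\Phi(x,y)$, so that the coupling terms cancel and $F(x)=g(x)-f(x)$ appears on the right-hand side.

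Concretely, first I would apply \cref{thm:phi_subgradients} (specifically the equivalence between (i) and (ii)) to the hypothesis $y \in \partial_\Phi f(x)$, which yields the equality
\begin{equation*}
f^\Phi(y) = \Phi(x,y) - f(x).
\end{equation*}
Next, by \cref{thm:phi_envelope:fenchel_young} applied to $g$ at the pair $(x,y)$, one has $g(x) + g^\Phi(y) \geq \Phi(x,y)$, i.e.\
\begin{equation*}
-g^\Phi(y) \leq g(x) - \Phi(x,y).
\end{equation*}
Adding the two relations gives $G(y) = f^\Phi(y) - g^\Phi(y) \leq \Phi(x,y) - f(x) + g(x) - \Phi(x,y) = g(x) - f(x) = F(x)$, which is the claimed inequality.

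There is no substantive obstacle here; the main point to verify is that the extended arithmetic is well-behaved. Since $y \in \partial_\Phi f(x)$ forces $f(x) \in \bR$ and $f^\Phi(y) \in \bR$ via the Fenchel--Young equality (as $\Phi$ is real-valued by \cref{assum:phi}), and since $g^\Phi$ is proper by \cref{assum:phi,assum:g} as noted in the paragraph preceding \cref{thm:lower_bound_dual}, the algebraic manipulation above does not run into $\infty - \infty$ issues. The case $g(x) = +\infty$ is trivial because then $F(x) = +\infty$ and the inequality holds vacuously.
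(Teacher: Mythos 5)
Your proof is correct and follows essentially the same route as the paper's: the paper also uses the Fenchel--Young equality $f^\Phi(y)=\Phi(x,y)-f(x)$ from \cref{thm:phi_subgradients} and then bounds $-g^\Phi(y)=\inf_{z\in X} g(z)-\Phi(z,y)$ by choosing $z=x$, which is exactly the Fenchel--Young inequality for $g$ that you invoke. Your additional remarks on extended arithmetic are a harmless refinement, not a divergence in method.
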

\begin{proof}
    \begin{align*}
        G(y)
        &= 
        - f(x) + \Phi(x, y) + \inf_{z \in X} ~g(z) - \Phi(z, y)  \\
        & \leq g(x) - f(x),
    \end{align*}
    where the equality follows by the equivalence in \cref{thm:phi_subgradients} since $y \in \partial_\Phi f(x)$ and the inequality follows by choosing $z=x$ in the infimum.
\end{proof}

Although both problems are equivalent in terms of infima, due to nonconvexity, we cannot expect our algorithm to find a global minimum. In the next subsection we will therefore explore necessary and sufficient conditions of local optimality. 

\subsection{Optimality conditions for difference of \texorpdfstring{$\Phi$}{Φ}-convex problems}

Even in the classical difference-of-convex case, 
verifying conditions for global minimality can be challenging due to the involvement of the $\varepsilon$-subdifferentials of the functions $f$ and $g$ \cite[Theorem 4.4]{hiriart1989convex}. The situation remains the same for the more general setting described in \eqref{eq:opt}, which has been extensively studied for special cases of the $\Phi$-coupling function; see for example \cite{fajardo2022subdifferentials}. For the sake of completeness, we present some results that provide sufficient and necessary conditions for global minimality in this broader context, which are a generalization of \cite[Theorem 7.114]{mordukhovich2022convex}.
\begin{proposition}[necessary and sufficient condition for global optimality] \label{thm:global_min}
    Let $x^\star \in X$. If $x^\star$ is a global minimizer of \cref{eq:opt} then $\partial_\Phi^\varepsilon f(x^\star) \subseteq \partial_\Phi^\varepsilon g(x^\star)$ for all $\varepsilon > 0$.
    Conversely, $\partial_\Phi^\varepsilon f(x^\star) \subseteq \partial_\Phi^\varepsilon g(x^\star)$ for all $\varepsilon \geq 0$ implies that $x^\star$ is a global minimizer of \cref{eq:opt}.
\end{proposition}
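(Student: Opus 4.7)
\emph{Plan.} The necessity direction should be a direct manipulation. Given $y\in \partial_\Phi^\varepsilon f(x^\star)$, one adds the defining $\varepsilon$-$\Phi$-subgradient inequality $f(x) - f(x^\star) \geq \Phi(x,y) - \Phi(x^\star, y) - \varepsilon$ to the global optimality inequality $g(x) - g(x^\star) \geq f(x) - f(x^\star)$, valid for every $x \in X$; this immediately yields $g(x) \geq g(x^\star) + \Phi(x,y) - \Phi(x^\star, y) - \varepsilon$, i.e. $y \in \partial_\Phi^\varepsilon g(x^\star)$. Notice that this argument in fact works for all $\varepsilon \geq 0$, slightly strengthening the stated conclusion.

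For the sufficiency, I fix any $x \in X$ and aim to show $g(x) - f(x) \geq g(x^\star) - f(x^\star)$. The plan is to manufacture, for each $\eta > 0$, some $y \in \partial_\Phi^\eta f(x)$ by exploiting $\Phi$-convexity of $f$: since $f = f^{\Phi\Phi}$ by \cref{thm:phi_envelope:phi_biconj}, the supremum defining $f^{\Phi\Phi}(x)$ is attained up to tolerance $\eta$, which by a Fenchel--Young rearrangement is equivalent to membership in $\partial_\Phi^\eta f(x)$. Evaluating the $\eta$-subgradient inequality at $x^\star$ shows that
\begin{equation*}
    \varepsilon' := \eta + f(x^\star) - f(x) + \Phi(x,y) - \Phi(x^\star,y) \geq 0,
\end{equation*}
and a small rearrangement upgrades the same $y$ to $\partial_\Phi^{\varepsilon'} f(x^\star)$. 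The hypothesis transports this to $\partial_\Phi^{\varepsilon'} g(x^\star)$; evaluating its defining inequality at the point $x$ and using the definition of $\varepsilon'$ to cancel the $\Phi$-terms delivers $g(x) - f(x) \geq g(x^\star) - f(x^\star) - \eta$, and letting $\eta \downarrow 0$ concludes.

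The main subtlety will be this \emph{subgradient transport} step: a near-subgradient at $x$ is only a near-subgradient at $x^\star$ at an inflated tolerance $\varepsilon'$ that depends on the (possibly large) values of $f$ and $\Phi$ at $x$ and $x^\star$. This is exactly why the converse direction needs the inclusion to hold for every $\varepsilon \geq 0$ rather than just for small $\varepsilon$. Minor care should also be taken for the degenerate values $f(x) = +\infty$ or $f(x^\star) = +\infty$, which can be dispatched by observing that $\partial_\Phi^\varepsilon f$ is vacuous at such points, so that both sides of the implication become trivial.
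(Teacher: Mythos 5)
Your proof is correct, and its core mechanism coincides with the paper's: the inflated tolerance $\varepsilon' = \eta + f(x^\star) - f(x) + \Phi(x,y) - \Phi(x^\star,y)$ that you use to transport an $\eta$-$\Phi$-subgradient from $x$ to $x^\star$ is exactly the quantity $\varepsilon$ defined in the paper's proof, and its nonnegativity is verified the same way (by evaluating the $\eta$-subgradient inequality at $x^\star$). The differences are in packaging. For necessity, the paper first passes to the DC-dual: it rewrites global optimality as $g(x^\star)+g^\Phi(y) \le f(x^\star)+f^\Phi(y)$ for all $y\in Y$ via the double-min duality \cref{eq:dual} and then unwinds the conjugates, whereas your chaining of the two primal inequalities $g(x)-g(x^\star)\ge f(x)-f(x^\star)\ge \Phi(x,y)-\Phi(x^\star,y)-\varepsilon$ reaches the same conclusion without invoking duality at all; your remark that this works for $\varepsilon=0$ too is correct (the paper restricts necessity to $\varepsilon>0$ only because its argument begins by \emph{selecting} an $\varepsilon$-subgradient, whose existence is guaranteed only for $\varepsilon>0$, which is irrelevant for a bare inclusion). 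For sufficiency, the paper argues by contradiction --- it picks $\bar x$ and $\bar\varepsilon>0$ with $g(\bar x)-f(\bar x)+\bar\varepsilon<g(x^\star)-f(x^\star)$ and shows the transported subgradient violates the $\varepsilon'$-inequality for $g$ at $\bar x$ --- whereas you run the same computation forward and let $\eta\downarrow 0$; both are sound, and your version buys a slightly cleaner quantitative statement ($F(x)\ge F(x^\star)-\eta$ for every $\eta>0$). The degenerate situations you flag ($f(x)=+\infty$ with $g(x)$ finite, or $\inf F=-\infty$) are glossed over by the paper as well (its displayed chain silently assumes $-\infty<\inf F$ and $\bar x\in\dom f$), so you are not behind the paper on rigor there.
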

\begin{proof}
    In virtue of \cref{eq:dual} we have that global optimality of $x^\star$ is equivalent to
    \begin{equation*}
        g(x^\star)-f(x^\star) \leq ~f^\Phi(y) - g^\Phi(y), \quad \forall y \in Y
    \end{equation*}
    which reads as
    \begin{equation} \label{eq:incl_second}
        g(x^\star)+ g^\Phi(y) \leq ~ f(x^\star) + f^\Phi(y),\quad \forall y \in Y.
    \end{equation}
    Now, choose an $\varepsilon > 0$ and consider a $y \in \partial_\Phi^\varepsilon f(x^\star) \subseteq Y$, which exists since $f$ is $\Phi$-convex. By definition,
    \begin{equation*}
        f(x) \geq f(x^\star) + \Phi(x, y) - \Phi(x^\star, y) -\varepsilon \qquad \forall x \in X,
    \end{equation*}
    and thus
    \begin{equation*}
        \Phi(x^\star, y)-f(x^\star) \geq \sup_{x \in X} \{\Phi(x,y) - f(x)\} - \varepsilon = f^\Phi(y) - \varepsilon.
    \end{equation*}
    by definition of the $\Phi$-conjugate. Combining the above inequality with \cref{eq:incl_second} we obtain that
    \begin{align*}
        \Phi(x^\star, y) &\geq f(x^\star) + f^\Phi(y) - \varepsilon \\
        &\geq g(x^\star) + g^\Phi(y) - \varepsilon \\
        &=g(x^\star) + \sup_{x \in X} \{\Phi(x,y) - g(x)\} - \varepsilon,
    \end{align*}
    and thus $\Phi(x^\star, y) \geq g(x^\star) + \Phi(x,y) - g(x) - \varepsilon$ for any $x \in X$ implying that $y \in \partial_\Phi^\varepsilon g(x^\star)$.

    We will prove the converse direction by contradiction. Let $\partial_\Phi^\varepsilon f(x^\star) \subseteq \partial_\Phi^\varepsilon g(x^\star)$ for all $\varepsilon \geq 0$. Suppose that $x^\star$ is not a global minimum of $F$. 

This implies the existence of some $\Bar{x} \in X$ and some $\Bar{\varepsilon} > 0$ such that
\begin{equation} \label{eq:contr}
    -\infty < \inf F < g(\Bar{x}) - f(\Bar{x}) + \bar \varepsilon < g(x^\star) - f(x^\star) < \infty.
\end{equation}
Hence $\bar x \in \dom f$ and thus we can find a $\Bar{y} \in \partial_\Phi^{\Bar{\varepsilon}} f(\Bar{x})$. The $\bar \varepsilon$-$\Phi$-subgradient inequality for $\bar y$ reads
\begin{align*}
    \Phi(x, \Bar{y}) - \Phi(\Bar{x}, \Bar{y}) \leq f(x) - f(\Bar{x}) + \Bar{\varepsilon}, \quad \forall x \in X.
\end{align*}
Adding and subtracting $\Phi(x^\star, \Bar{y})$ and $f(x^\star)$, we obtain
\begin{align*}
    \Phi(x, \Bar{y}) - \Phi(x^\star, \Bar{y}) \leq f(x) - f(x^\star) + \Phi(\Bar{x}, \Bar{y}) - \Phi(x^\star, \Bar{y}) + f(x^\star) - f(\Bar{x})  + \Bar{\varepsilon}, \quad \forall x \in X
\end{align*}
and by defining $\varepsilon := \Phi(\Bar{x}, \Bar{y}) - \Phi(x^\star, \Bar{y}) + f(x^\star) - f(\Bar{x})  + \Bar{\varepsilon}$ we have that $\varepsilon \geq 0$ by the $\bar \varepsilon$-$\Phi$-subgradient inequality at $\bar x$ for $\bar y$. This in turn means that $\Bar{y} \in \partial_\Phi^{\varepsilon} f(x^\star)$, while we also have that
\begin{align*}
    \Phi(\bar x, \Bar{y}) - \Phi(x^\star, \Bar{y}) = -f(x^\star) + f(\Bar{x})  - \Bar{\varepsilon} + \varepsilon > g(\Bar{x})-g(x^\star)  + \varepsilon,
\end{align*}
where the last inequality follows by \eqref{eq:contr}. This means that $g(\Bar{x})< g(x^\star) +\Phi(\bar x, \Bar{y}) - \Phi(x^\star, \Bar{y}) -\varepsilon$ and hence the $\varepsilon$-$\Phi$-subgradient inequality is violated at $\bar x$ for $\varepsilon$ and thus $\Bar{y} \notin \partial_\Phi^{\varepsilon} g(x^\star)$, a contradiction.
\end{proof}

The verification of such conditions is in general intractable and thus in this paper we mainly focus on conditions for local minimality and stationarity, which is common practice in nonconvex optimization. We proceed with a necessary condition for local minimality under the existence of local $\Phi$-subgradients for $f$:
\begin{proposition}[necessary condition for local optimality] \label{thm:local_min}
    Let $x^\star$ be a (local) minimizer of \cref{eq:opt} and let $y^\star \in \partial_\Phi f(x^\star)$. Then $y^\star$ satisfies the $\Phi$-subgradient inequality for $g$ (locally).
\end{proposition}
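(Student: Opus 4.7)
The plan is to chain two inequalities: the one coming from local optimality of $x^\star$ for $F = g - f$, and the one coming from the $\Phi$-subgradient relation at $x^\star$.

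First, I note that since $x^\star$ is a (local) minimizer of \cref{eq:opt}, there is a neighborhood $U$ of $x^\star$ such that
\begin{equation*}
g(x) - f(x) \geq g(x^\star) - f(x^\star) \quad \text{for all } x \in U \cap X.
\end{equation*}
Before rearranging, I would observe that necessarily $x^\star \in \dom F$ (otherwise the local minimum condition would be vacuous or inconsistent with \cref{assum:lower_bound}), so $g(x^\star)$ and $f(x^\star)$ are finite; in particular we can legitimately move $f(x^\star)$ across the inequality to obtain
\begin{equation*}
g(x) - g(x^\star) \geq f(x) - f(x^\star) \quad \text{for all } x \in U \cap X.
\end{equation*}

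Second, by hypothesis $y^\star \in \partial_\Phi f(x^\star)$, so \cref{def:phi_subg} with $\varepsilon=0$ yields the global inequality
\begin{equation*}
f(x) - f(x^\star) \geq \Phi(x, y^\star) - \Phi(x^\star, y^\star) \quad \text{for all } x \in X.
\end{equation*}
Combining the two displayed bounds gives, for every $x \in U \cap X$,
\begin{equation*}
g(x) \geq g(x^\star) + \Phi(x, y^\star) - \Phi(x^\star, y^\star),
\end{equation*}
which is exactly the local $\Phi$-subgradient inequality for $g$ at $x^\star$ with multiplier $y^\star$.

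This argument is essentially a one-line chaining; there is no real obstacle. The only points I would be careful about are (a) that $f(x^\star)$ is finite so that the subtraction in the first step is unambiguous under the extended arithmetics recalled after \cref{eq:dual}, and (b) if the statement is to be read globally (i.e., $x^\star$ a global minimizer), then the neighborhood $U$ can be taken as the whole of $X$ and the same computation yields the full (not merely local) $\Phi$-subgradient inequality, so $y^\star \in \partial_\Phi g(x^\star)$ in that stronger case.
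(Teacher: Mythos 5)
Your proof is correct and follows essentially the same argument as the paper: rearrange the local optimality inequality for $F=g-f$ and chain it with the $\Phi$-subgradient inequality for $f$ at $x^\star$. The additional remarks on the finiteness of $f(x^\star)$ and the global case are sensible but not needed beyond what the paper already does.
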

\begin{proof}
    Since $x^\star$ is a (local) minimium of \cref{eq:opt} we have that $g(x) - f(x) \geq g(x^\star) - f(x^\star)$ for all $x \in X$ (near $x^\star$). This inequality can be rewritten as:
    \begin{equation*}
        g(x) - g(x^\star)  \geq f(x) - f(x^\star).
    \end{equation*}
    Since $y^\star \in \partial_\Phi f(x^\star)$ we have that
    \begin{equation}
     f(x)-f(x^\star) \geq \Phi(x, y^\star) - \Phi(x^\star,y^\star).
    \end{equation}
    for all $x \in X$. Combining this inequality with the above, we obtain that:
    \begin{align} \label{eq:subgradient_ineq_local}
        g(x) - g(x^\star) \geq \Phi(x, y^\star) - \Phi(x^\star,y^\star),
    \end{align}
    for all $x \in X$ (with $x$ near $x^\star$).
\end{proof}
\begin{remark}[local vs. global subgradient inequality]
In contrast to the convex case, the validity of the $\Phi$-subgradient inequality in a neighborhood of $x^\star$ does not imply that the inequality holds on the whole set $X$. This is illustrated in \cite[Example 5.5]{laude2023dualities}. Nevertheless, in many important examples, such an implication holds. This is evident by noting that \eqref{eq:subgradient_ineq_local} implies that $x^\star$ is a local minimizer of $g(\cdot) - \Phi(\cdot, y^\star)$. Therefore, if this function is convex the local minimizer is a global one and as such \eqref{eq:subgradient_ineq_local} holds for all $x \in X$, i.e., $y^\star \in \partial_\Phi g(x^\star)$.
\end{remark}

The implication described in \cref{thm:local_min} is a necessary condition for $x^\star$ to be a local minimizer of $F$ under the existence of a (local) $\Phi$-subgradient of $f$. Nevertheless it is impossible to verify such a condition in practice, since it involves an inequality in a generally unknown neighborhood of $x^\star$. A more practical notion is $\Phi$-criticality, which is given below. As we shall see the algorithm proposed in this work converges to $\Phi$-critical points.
\begin{definition}[$\Phi$-criticality]
    We say that a point $x^\star \in \dom F$ is $\Phi$-critical if $y^\star \in \partial_\Phi f(x^\star) \cap \partial_\Phi g(x^\star) \neq \emptyset$. 
\end{definition}
In the case where $f$ and $\Phi$ are smooth $\Phi$-criticality of $x^\star \in \intr X$ implies standard stationarity and hence $\Phi$-criticality is a necessary condition for local optimality. This is explored in the following proposition.
\begin{proposition}
    Let $x^\star \in \intr X$ be a $\Phi$-critical point and $f$ and $\Phi(\cdot, y^\star)$ be strictly differentiable at $x^\star$. Then, $x^\star$ is a stationary point of $F$, i.e. $0 \in \partial F(x^\star)$.
\end{proposition}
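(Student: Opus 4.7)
The plan is to translate $\Phi$-criticality into two variational statements via \cref{thm:phi_subgradients}, then apply Fermat's rule together with the strict differentiability hypotheses to obtain a subdifferential inclusion for $F$.

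First, I would unpack the definition of $\Phi$-criticality. Since $y^\star \in \partial_\Phi f(x^\star) \cap \partial_\Phi g(x^\star)$, \cref{thm:phi_subgradients:x_argmin} (applied to $f$ and to $g$) yields
\begin{align*}
x^\star \in \argmin_{x\in X} \bigl\{f(x) - \Phi(x,y^\star)\bigr\}
\qquad\text{and}\qquad
x^\star \in \argmin_{x\in X} \bigl\{g(x) - \Phi(x,y^\star)\bigr\}.
\end{align*}
Because $x^\star \in \intr X$, each of these is in particular an unconstrained local minimum of the corresponding function on a neighborhood of $x^\star$ contained in $X$.

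Second, I would exploit strict differentiability. For the $f$-relation, both $f$ and $\Phi(\cdot,y^\star)$ are strictly differentiable at $x^\star$, so $f - \Phi(\cdot,y^\star)$ is strictly differentiable at $x^\star$ with gradient $\nabla f(x^\star) - \nabla_x \Phi(x^\star,y^\star)$; Fermat's rule at the interior local minimum $x^\star$ gives
\begin{equation*}
\nabla f(x^\star) = \nabla_x \Phi(x^\star,y^\star).
\end{equation*}
For the $g$-relation, Fermat's rule for the regular subdifferential (cf.\ \cite[Theorem~10.1]{RoWe98}) at the local minimum $x^\star$ of $g - \Phi(\cdot,y^\star)$ gives $0 \in \widehat{\partial}\bigl(g - \Phi(\cdot,y^\star)\bigr)(x^\star)$; then the sum rule for a strictly differentiable perturbation \cite[Exercise~8.8]{RoWe98} yields $0 \in \widehat{\partial} g(x^\star) - \nabla_x \Phi(x^\star,y^\star)$, whence
\begin{equation*}
\nabla_x \Phi(x^\star,y^\star) \in \widehat{\partial} g(x^\star) \subseteq \partial g(x^\star).
\end{equation*}

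Combining the two displays yields $\nabla f(x^\star) \in \partial g(x^\star)$. Applying the sum rule one more time to $F = g + (-f)$, where $-f$ is strictly differentiable at $x^\star$ with gradient $-\nabla f(x^\star)$, we obtain
\begin{equation*}
\partial F(x^\star) \supseteq \partial g(x^\star) - \nabla f(x^\star) \ni 0,
\end{equation*}
which is the claim. The only delicate step is invoking the strict-differentiability sum rule correctly on both sides, so that the regular subdifferential passes through the smooth part; strict differentiability (rather than mere differentiability) is exactly what this sum rule requires, so the hypotheses are used in an essential way.
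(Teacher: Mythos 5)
Your proposal is correct and follows essentially the same route as the paper's proof: translate the two $\Phi$-subgradient relations into argmin statements via \cref{thm:phi_subgradients}, apply Fermat's rule at the interior point, use the sum rule for a strictly differentiable perturbation to extract $\nabla_x\Phi(x^\star,y^\star)\in\partial g(x^\star)$, and conclude $0\in\partial F(x^\star)$ by one more application of that sum rule. Your explicit remark that $x^\star\in\intr X$ is what turns the constrained argmin into an unconstrained local minimum is a small but welcome clarification that the paper leaves implicit.
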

\begin{proof}
    Since $y^\star \in \partial_\Phi f(x^\star)$ by \cref{thm:phi_subgradients}, $x^\star \in \argmin_{x \in X} f(x) - \Phi(x, y^\star)$. This in turn implies through Fermat's rule that $\nabla_x \Phi(x^\star, y^\star) = \nabla f(x^\star)$. Since moreover $y^\star \in \partial_\Phi g(x^\star)$, in light of \cite[Theorem 10.1]{RoWe98} we also have that $0 \in \partial (g(x^\star) - \Phi(x^\star, y^\star))$. From the strict differentiability of $\Phi(x^\star, y^\star)$ and since $g$ is proper and lsc we have from \cite[Exercise 10.10]{RoWe98} that $\partial (g(x^\star) - \Phi(x^\star, y^\star)) = \partial g(x^\star) - \nabla_x \Phi(x^\star, y^\star) $ or that $\nabla_x \Phi(x^\star, y^\star) \in \partial g(x^\star)$ which in turn implies that $0 \in \partial g(x^\star) - \nabla f(x^\star)$. Since $f$ is strictly differentiable at $x^\star$, $0 \in \partial F(x^\star)$ from \cite[Exercise 10.10]{RoWe98}.
\end{proof}
The fact that $x^\star$ has to be in the interior of $X$ might seem restrictive, but it is standard in the general nonconvex setting considered in this paper where $0 \in \partial F(x^\star)$ is not a necessary condition for optimality when $x^\star$ is in the boundary of $X$.

 \section{The difference of \texorpdfstring{$\Phi$}{Φ}-convex algorithm}
 \label{sec:alg}
Akin to the classical difference-of-convex approach we propose the \emph{difference-of-$\Phi$-convex algorithm} ($\Phi$-DCA) for solving \eqref{eq:opt}:
\begin{align} \label{eq:DCA}
\begin{cases}
    y^k \in \partial_\Phi f(x^k) \\
    x^{k+1} \in (\partial_\Phi g)^{-1}(y^k),
\end{cases}
\end{align}
where we refer to the $y$-update as the forward-step and the $x$-update as the backward-step.

As for well-definedness of the scheme we have to ensure that range and domain of forward-step and backward-step are compatible with each other. For that purpose assume that $x^0 \in \dom \partial_\Phi f$ and
\begin{assumenum}[resume]
    \item \label{assum:range} $\emptyset \neq \ran (\partial_\Phi g)^{-1} \subseteq \dom \partial_\Phi f$ and $\emptyset \neq \ran \partial_\Phi f \subseteq \dom (\partial_\Phi g)^{-1}$.
\end{assumenum}
In light of \cref{thm:phi_subgradients} the backward-step can be further rewritten as 
\begin{align}
    (\partial_\Phi g)^{-1}(y^k) = \argmin_{x \in X} g(x) - \Phi(x, y^k)\subseteq \partial_\Phi g^\Phi(y^k),
\end{align}
where the inclusion holds with equality if $g$ is $\Phi$-convex. In particular, $\emptyset \neq \dom (\partial_\Phi g)^{-1}$ implies the second part of \cref{assum:g}.
Thanks to the characterization of the backward-step as a minimization problem, $\ran \partial_\Phi f \subseteq \dom (\partial_\Phi g)^{-1}$ can be ensured if $g - \Phi(\cdot, y)$ is level-bounded for every $y \in \ran \partial_\Phi f$.

Notice further that the $\Phi$-subgradient inequality of $f$ for the forward-step gives rise to a minorizing surrogate for $f$:
\begin{equation}
    f(x) \geq f(x^k) + \Phi(x,y^k) -\Phi(x^k, y^k). \label{eq:y_update_surrogate}
\end{equation}
It is transformed into a majorizing model of the composite cost $F$ by multiplication with $-1$ and adding $g(x)$ to both sides of the inequality:
\begin{equation}
    F(x)=g(x)-f(x) \leq g(x) - f(x^k) -\Phi(x, y^k) + \Phi(x^k, y^k).\label{eq:majorizing_model}
\end{equation}
Hence the combined update in \eqref{eq:DCA} can be understood as a majorize-minimize procedure \eqref{eq:majorizing_model}:
\begin{align}\label{eq:majorize_minimize}
    x^{k+1} \in \argmin_{x \in X} ~g(x) - f(x^k) -\Phi(x, y^k) + \Phi(x^k, y^k).
\end{align}

Thanks to the fact that $g^\Phi$ is $\Phi$-convex on $Y$ we can apply the algorithm to \eqref{eq:dual} with flipped roles of $X$ and $Y$. By $\Phi$-convexity of $f^\Phi$ on $Y$ this yields the following algorithm:
\begin{align} \label{eq:DCA_dual}
\begin{cases}
x^k \in \partial_\Phi g^\Phi(y^k) \\
y^{k+1} \in \partial_\Phi (f^\Phi)^\Phi(x^k).
\end{cases}
\end{align}
In light of \cref{thm:phi_envelope} we have that $(f^\Phi)^\Phi = f$. Since, moreover $(\partial_\Phi g)^{-1} \subseteq \partial_\Phi g^\Phi$ due to \cref{thm:phi_subgradients} it is evident that, up to a cyclic interchange of the updates, \eqref{eq:DCA} is an instance of \eqref{eq:DCA_dual}.
If furthermore, $g$ is $\Phi$-convex on $X$, $(\partial_\Phi g)^{-1} = \partial_\Phi g^\Phi$ and thus both algorithms are equivalent up to an interchange of forward- and backward-step.

\begin{table}[ht]
\begin{tabular}{ |c|c|c|c|c|c| }  
 \hline  \xrowht[()]{15pt}
 $\Phi$-DCA & $X$ & $Y$ & $\Phi(x, y)$ & $\Gamma_\Phi(X)$ & $\partial_\Phi f$ \\ 
  \hline \hline \xrowht[()]{10pt}
 DCA & $\bR^n$ & $\bR^n$ & $\langle x, y \rangle$ 
 & $\Gamma_0(\bR^n)$ & $\partial f$ \\ 
 \hline \xrowht[()]{10pt}
 PGM &$\bR^n$ &$\bR^n$ & $-\tfrac{L}{2}\|x-y\|^2$ 
 & $\supseteq\mathcal{C}_L$ & $\id + \tfrac{1}{L}\nabla f$ \\ 
 \hline \xrowht[()]{10pt}
 B-PGM & $\dom h$& $\intr \dom h$ & $-L D_h(x,y)$ & $\supseteq \mathcal{C}_h^B$ & $\nabla h^* \circ (\nabla h+\tfrac{1}{L}\nabla f)$ \\ 
  \hline \xrowht[()]{10pt}
 NGD & $\bR^n$& $\bR^n$ & $-D_h(y, x)$ & $\supseteq \mathcal{C}_{h^*}^{B}$ & $\id+\nabla^2 h(\cdot)^{-1}\nabla f$ \\ 
 \hline \xrowht[()]{10pt}
 a-PGM &$\bR^n$ &$\bR^n$ & $-\tfrac{1}{L} \star h(x-y)$ & $\supseteq \mathcal{C}_h^a$ & $\id - \tfrac{1}{L}\nabla h^* \circ (-\nabla f)$ \\ 
 \hline \xrowht[()]{10pt}
 H-PGM &$\bR^n$ &$\bR^n \times \bR^n$ & \makecell{$\langle x-y_1,y_2 \rangle$\\
 $-\tfrac{H}{\nu+1}\|x-y_1\|^{\nu+1}$} & $\supseteq \mathcal{C}_H^\nu$ & $\supseteq (\id, \nabla f)$ \\ 
 \hline
 \xrowht[()]{10pt}
 TM &$\bR^n$ &$\bR^n \times \bR^n \times \dots$ & \makecell{$\sum_{i=1}^p \tfrac{1}{i!} Y_{i+1} [x - y_1]^i$ \\ $- \tfrac{L_p}{(p+1)!}\|x-y_1\|^{p+1}$}   & $\supseteq \mathcal{C}_L^p$ &  $ \supseteq (\id, \nabla f, \dots, D^p f)$ \\ 
 \hline
 
\end{tabular}
\caption{ \label{tab:algs}
    Examples of $\Phi$-convexity and the $\Phi$-DCA. Here, $\Gamma_\Phi(X)$ denotes the class of functions that are $\Phi$-convex on $X$. The $\Phi$-DCA column denotes the name of the specific instance of the algorithm, as given in the examples of \cref{sec:alg}. $\mathcal{C}_L$ denotes the class of $L$-Lipschitz smooth functions on $\bR^n$, $\mathcal{C}_h^B$ the class of relatively smooth functions \cite{bolte2018first,lu2018relatively} with Legendre convex reference function $h\in \Gamma_0(\bR^n)$ and constant $L$, $\mathcal{C}_{h^*}^{B}$ the class of functions such that $f\circ \nabla h^* \in \Gamma_0(\bR^n)$, $\mathcal{C}_h^a$ the class of anisotropically smooth functions \cite{laude2021lower,laude2023dualities,laude2022anisotropic} with Legendre convex reference function $h$ and constant $L$, $\mathcal{C}_H^\nu$ the class of H\"older smooth functions of order $\nu$ and modulo $H$ and $\mathcal{C}_L^p$ the class of functions with the $p$-th derivative being Lipschitz continuous. The tensor products in the $\Phi$-coupling simplify in the case $p=2$, where  $\Phi(x,(y_1,Y_2,Y_3)) = \langle Y_2, x-y_1 \rangle + \langle x-y_1, Y_3  (x-y_1)\rangle - \tfrac{L_p}{3!}\|x-y_1\|^3$ for $y_1, Y_2 \in \bR^n$ and $Y_3 \in \bR^{n \times n}$.
    The subset relations in the $\Gamma_\Phi(X)$ column indicate that the problem class is a subset of the class of $\Phi$-convex functions. The $\partial_\Phi f$ column contains (the continuous selection of) the $\Phi$-subdifferential of $f$ for $f$ being restricted to the problem class. The subset relations in the final column demonstrate that the $\Phi$-subdifferential might in fact include more $\Phi$-subgradients than the indicated one.
    }
\end{table}

\subsection{Examples of \texorpdfstring{$\Phi$}{Φ}-convexity and unification of existing algorithms} \label{sec:examples}
In this section we present examples of $\Phi$-coupling functions and the corresponding instances of the $\Phi$-DCA, demonstrating that the proposed framework unifies many important methods from existing literature. A summary is provided in \cref{tab:algs}, which shows how each of the presented algorithms is incorporated in the $\Phi$-DCA framework. More precisely, it contains the sets $X, Y$, the $\Phi$-coupling function, the problem class associated with each method and (the continuous selection of) the $\Phi$-subdifferential of $f$ that corresponds to the forward step of each method. It is important to distinguish the difference between the problem class and the class of $\Phi$-convex functions, denoted by $\Gamma_\Phi(X)$, that is in general a superset of the problem class. In order to demonstrate that, consider the case where $\Phi(x,y) = -\tfrac{L}{2}\|x-y\|^2$. In \cref{tab:algs}, the problem class is that of $L$-Lipschitz smooth functions on $\bR^n$, $\mathcal{C}_L$. Nevertheless, in light of \cite[Proposition 3.4]{laude2023dualities}, every lsc and $L$-weakly convex function is $\Phi$-convex in this setting, which implies that $\mathcal{C}_L \subset \Gamma_\Phi(\bR^n)$. 

Note that in the examples which are of the standard proximal gradient type, there is a switch in the sign in front of $f$ in relation to the methods in the literature. This is due to the problem considered in this paper being $g-f$ instead of the standard $g+f$.

\begin{example}[standard difference of convex method (DCA)] \label{example:DCA}
    In the case where both $f,g: \bR^n \to \exR$ are proper, lsc and convex in \eqref{eq:opt}, the $\Phi$-DCA is exactly the standard DCA. The well-definedness assumptions in the setting of this paper are then the ones of the standard DCA (see \cite{tao1997convex})
\end{example}
\begin{example}[proximal gradient method (PGM)] \label{example:PGM}
    Let $X = Y = \bR^n$ and consider in \eqref{eq:opt} the case where $-f$ is $L$-smooth on $\bR^n$, $g$ is proper, lsc, $g + \tfrac{L}{2}\|\cdot-y\|^2$ is level-bounded for any $y \in \bR^n$ and choose $\Phi(x,y) = -\tfrac{L}{2}\|x-y\|^2$. 
    \begin{itemize}
        \item The standard Euclidean descent inequality can be shown to be the $\Phi$-subgradient inequality for $f$ \eqref{eq:phi_subgradient_ineq} using some simple algebraic manipulations:
    \begin{equation*}
        f(x) \geq f(\bar x) + \tfrac{L}{2}\|\tfrac{1}{L} \nabla f(\bar x)\|^2 - \tfrac{L}{2}\|x-(\bar x + \tfrac{1}{L} \nabla f(\bar x))\|^2 \qquad x, \bar x \in \bR^n.
    \end{equation*}
    From the inequality above it is evident that $\bar y := \bar x + \tfrac{1}{L}\nabla f(\bar x) \in \partial_\Phi f(\bar x)$ for all $\bar x \in \bR^n$. More precisely, from \cite[Proposition 3.4]{laude2023dualities} $\bar y$ is the unique element of $\partial_\Phi f(\bar x)$.
        \item Regarding the backward step, due to the level-boundedness of $g+\tfrac{L}{2}\|\cdot-y\|^2$ for all $y \in \bR^n$, $\argmin_x g(x)+\tfrac{L}{2}\|x-\cdot\|^2$ is well-defined in $\bR^n$. This fact, along with $\dom \partial_\Phi f = \bR^n$ implies that \cref{assum:range} is satisfied and as such the algorithm \eqref{eq:majorize_minimize} is well-defined and given by
        \begin{equation*}
            x^{k+1} \in \argmin_{x \in \bR^n} g(x) + \tfrac{L}{2}\|x - (x^k + \tfrac{1}{L}\nabla f(x^k))\|^2,
        \end{equation*}
        which is the standard proximal gradient step.
    \end{itemize}    
\end{example}
\begin{example}[Bregman proximal gradient method (B-PGM)] \label{example:BPGM}
    Consider now the case where $-f$ is $L$-smooth relative to a Legendre function $h:\bR^n \to \bR$ with $\intr \dom h$ a nonempty and convex set. More precisely identify $X$ with $\dom h$ and assume moreover that $h$ is continuous on $X$, while the following inequality holds:
    \begin{equation*}
        -f(x) \leq -f(\bar x) + \langle - \nabla f(\bar x),x-\bar x \rangle + L D_h(x, \bar x),
    \end{equation*}
    for $x \in X$ and $\bar x \in \intr \dom h$. Examples of such functions $h$ can be found in \cite[Example 3.1]{teboulle1992entropic}, while examples of relative smoothness can be found in \cite{bolte2018first, lu2018relatively}.  In this case the Bregman proximal gradient method is a specific instance of the $\Phi$-DCA for $\Phi(x,y) = -L D_h(x, y)$ and $Y = \intr \dom h$.
    \begin{itemize}
        \item The inequality above after some algebraic manipulations can be transformed into the $\Phi$-subgradient inequality for $f$:
        \begin{equation*}
            f(x) \geq f(\bar x) - LD_h(x, \nabla h^*(\nabla h(\bar x)-\tfrac{1}{L}\nabla f(\bar x))) + L D_h(\bar x, \nabla h^*(\nabla h(\bar x)-\tfrac{1}{L}\nabla f(\bar x))),
        \end{equation*}
        for all $x \in X$ and $\bar x \in Y$,
        which directly implies that $\nabla h^*(\nabla h(\bar x)-\tfrac{1}{L}\nabla f(\bar x)) \in \partial_\Phi f(\bar x)$.
        This fact was studied in \cite[Proposition 3.4]{laude2023dualities} where it is shown that the $\Phi$-subdifferential of $f$ is exactly given by $\partial_\Phi f(x) = \{\nabla h^*(\nabla h(x) + \tfrac{1}{L}\nabla f(x))\}$ for $x \in \intr \dom h$, i.e. the forward step of the Bregman gradient method is the unique element of $\partial_\Phi f(\bar x)$ for all $\bar x \in \intr X$.

        \item Now, notice that by using the formula for $\partial_\Phi f(x^k)$ in \eqref{eq:majorize_minimize} and discarding the constant terms we obtain the following update:
        $$
            x^{k+1} \in \argmin_{x \in X} g(x) - \langle \nabla f(x^k),x-x^k \rangle + L D_h(x, x^k),
        $$
        which is exactly the Bregman proximal gradient update. Regarding the well-definedness of the iterates, it can be verified that the condition in \cref{assum:range} is guaranteed from \cite[Assumptions B and C]{bolte2018first} along with the nonemptyness of the $\Phi$-subdifferential of $f$ on $\intr X$.
    \end{itemize}
\end{example}
\begin{example}[natural gradient method (NGM) \cite{amari1998natural}] \label{example:NGM}
    When considering the Bregman distance $D_h$ as a coupling function in the $\Phi$-DCA, one can switch the role of the arguments $x$ and $y$ and obtain the well-studied natural gradient algorithm. This fact was extensively studied in \cite[Section 4.2]{léger2023gradient} under a general alternating minimization framework. More specifically, in \eqref{eq:opt} assume now that $g := 0$ and $h^*+f\circ\nabla h^*$ defined on $\bR^n$ is a convex function. Then we have $\dom \partial_\Phi f = \bR^n$ for $\Phi(x,y) = -D_h(y, x)$ and the unique $\Phi$-subgradient is given by $\partial_\Phi f(x) = \{x + \nabla^2 h(x)^{-1}\nabla f(x)\}$ as proved in the following proposition.
\end{example}
    \begin{proposition}
        Let $f:\bR^n \to \bR$ be smooth, $h \in \mathcal{C}^3(\bR^n)$ a strictly convex function with nonsingular Hessian and $h^*+f\circ\nabla h^*$ be convex. Then,
        $$\partial_\Phi f(x) = \{x + \nabla^2 h(x)^{-1}\nabla f(x)\}  \qquad \forall x \in \bR^n.$$
    \end{proposition}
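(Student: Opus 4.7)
The plan is to reduce the $\Phi$-subgradient condition to a convex optimization problem via a change of variables. By \cref{thm:phi_subgradients:x_argmin} applied to $\Phi(x,y)=-D_h(y,x)$, the inclusion $\bar y \in \partial_\Phi f(\bar x)$ is equivalent to
\begin{align}
\bar x \in \argmin_{x \in \bR^n}~f(x) + D_h(\bar y, x) = f(x) + h(\bar y) - h(x) - \langle \nabla h(x), \bar y - x\rangle.
\end{align}
The strict convexity of $h$ together with its nonsingular Hessian and $\mathcal{C}^3$ regularity make $\nabla h$ a $\mathcal{C}^2$-diffeomorphism with inverse $\nabla h^\ast$, so I would introduce the new variable $v = \nabla h(x)$. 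Using the Fenchel identity $h(\nabla h^\ast(v)) = \langle v, \nabla h^\ast(v)\rangle - h^\ast(v)$, the objective transforms to
\begin{align}
f(\nabla h^\ast(v)) + h(\bar y) + h^\ast(v) - \langle v, \bar y\rangle = (h^\ast + f\circ \nabla h^\ast)(v) - \langle v,\bar y\rangle + h(\bar y).
\end{align}
The crucial gain is that the right-hand side is now \emph{convex} in $v$ by hypothesis.

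Since the transformed problem is convex, Fermat's rule is both necessary and sufficient. Differentiating $(h^\ast + f\circ \nabla h^\ast)(v) - \langle v,\bar y\rangle$ using the chain rule and the symmetry of Hessians yields the optimality condition
\begin{align}
\nabla h^\ast(v^\star) + \nabla^2 h^\ast(v^\star)\,\nabla f(\nabla h^\ast(v^\star)) = \bar y.
\end{align}
Substituting $v^\star = \nabla h(\bar x)$ and using the standard identity $\nabla^2 h^\ast(\nabla h(\bar x)) = \nabla^2 h(\bar x)^{-1}$, this reads $\bar y = \bar x + \nabla^2 h(\bar x)^{-1}\nabla f(\bar x)$, which uniquely pins down $\bar y$ from $\bar x$ and hence proves both directions: existence of the $\Phi$-subgradient (the critical point of the convex function is automatically a global minimizer) and uniqueness (any $\Phi$-subgradient must satisfy this first-order identity).

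I expect the main obstacle to be bookkeeping around the change of variables rather than any conceptual difficulty, specifically verifying that $\nabla h$ indeed provides a bijective reparameterization on the relevant set so that the two $\argmin$ problems are genuinely equivalent, and computing $\nabla(f\circ\nabla h^\ast)$ cleanly with the correct Hessian transposition. The convexity hypothesis $h^\ast + f\circ \nabla h^\ast \in \Gamma_0(\bR^n)$ is doing essentially all the work: without it, the first-order condition would only yield a local extremum of $f(\cdot)+D_h(\bar y,\cdot)$ and would not suffice to conclude $\bar x \in \argmin$, which is what \cref{thm:phi_subgradients:x_argmin} demands.
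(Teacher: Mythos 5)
Your plan is correct, and it reorganizes the argument in a way that genuinely differs from the paper's proof. The paper establishes existence by writing the convex gradient inequality for $h^*+f\circ\nabla h^*$ between two points, substituting $z=\nabla h^*(x)$, and then invoking the three-point identity for Bregman divergences to massage the result into the $\Phi$-subgradient inequality for the candidate $\bar y=\bar x+\nabla^2 h(\bar x)^{-1}\nabla f(\bar x)$; uniqueness is handled afterwards by a separate appeal to the $\argmin$ characterization of \cref{thm:phi_subgradients} and its first-order condition. You instead start from that $\argmin$ characterization, reparameterize by $v=\nabla h(x)$ so that the objective becomes the convex function $(h^*+f\circ\nabla h^*)(v)-\langle v,\bar y\rangle+h(\bar y)$, and read off both existence and uniqueness at once from the equivalence of ``vanishing gradient'' and ``global minimizer'' for a differentiable convex function on the open convex set $\ran\nabla h=\intr\dom h^*$. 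Both routes rest on exactly the same two ingredients (the convexity hypothesis on $h^*+f\circ\nabla h^*$ and the Legendre change of variables), but yours dispenses with the three-point-property computation and unifies the two directions of the claim, at the price of the bookkeeping you correctly flag: one must check that $\nabla h$ is a bijection of $\bR^n$ onto $\intr\dom h^*$ (which follows since $h$ is strictly convex with nonsingular Hessian, hence Legendre) so that the two $\argmin$ problems are equivalent, and that Fermat's rule on this open set is both necessary and sufficient. The chain-rule computation and the identity $\nabla^2 h^*(\nabla h(\bar x))=\nabla^2 h(\bar x)^{-1}$ are used identically in both proofs. No gap; just fill in the change-of-variables justification when writing it out.
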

    \begin{proof}
        Let us consider the convex gradient inequality for $h^* + f \circ \nabla h^*$ between points $x, \bar x \in \bR^n$:
        \begin{align*}
            f(\nabla h^*(x)) + h^*(x) \geq f(\nabla h^*(\bar x)) + h^*(\bar x) + \langle \nabla^2 h^*(\bar x)\nabla f(\nabla h^*(\bar x)) + \nabla h^*(\bar x), x- \bar x \rangle.
        \end{align*}
        With the change of variables $\nabla h^*(x) = z$ and $\nabla h^*(\bar x) = \bar z$, we get
        \begin{align*}
        f(z) 
        &\geq
        f(\bar z) - h^*(\nabla h(z)) + h^*(\nabla h(\bar z)) + \langle \nabla^2 h(\bar z)^{-1}\nabla f(\bar z) + \bar z, \nabla h(z)- \nabla h(\bar z) \rangle \\
        &=
        f(\bar z) + \langle \nabla^2 h(\bar z)^{-1}\nabla f(\bar z), \nabla h(z)- \nabla h(\bar z) \rangle - h^*(\nabla h(z)) + h^*(\nabla h(\bar z)) + \langle \bar z, \nabla h(z)- \nabla h(\bar z) \rangle \\
        &=
         f(\bar z) + \langle \nabla^2 h(\bar z)^{-1}\nabla f(\bar z), \nabla h(z)- \nabla h(\bar z) \rangle - D_{h^*}(\nabla h(z), \nabla h(\bar z)) \\
        &=
         f(\bar z) + \langle \nabla^2 h(\bar z)^{-1}\nabla f(\bar z), \nabla h(\bar z)- \nabla h(z) \rangle - D_{h}(\bar z, z),
    \end{align*}
    where in the last equality we have used the fact that $D_{h^*}(\nabla h(z), \nabla h(\bar z)) = D_{h}(\bar z, z)$, which follows by the strict convexity of $h$. Now, consider points $w = z$, $\bar w = \bar z$ and $w^+ = \bar z + \nabla^2 h(\bar z)^{-1}\nabla f(\bar z)$ and note that from the three-point property we have:
    \begin{align*}
        \langle \nabla^2 h(\bar z)^{-1}\nabla f(\bar z), \nabla h(\bar z)- \nabla h(z) \rangle - D_{h}(\bar z,z) 
        &=
        -\langle \bar w- w^+,\nabla h(\bar w)- \nabla h(w) \rangle + D_h(\bar w, w) \\
        &= -D_h(w^+, w) + D_h(w^+, \bar w)
    \end{align*}
    Substituting back into the inequality above we get
    \begin{equation}
        f(z) + D_h(\bar z + \nabla^2 h(\bar z)^{-1}\nabla f(\bar z), z) \geq f(\bar z) + D_h(\bar z + \nabla^2 h(\bar z)^{-1}\nabla f(\bar z), \bar z),
    \end{equation}
    which is the $\Phi$-subgradient inequality. This directly implies that $\bar x + \nabla^2 h(\bar x)^{-1}\nabla f(\bar x) \in \partial_\Phi f(\bar x)$. Regarding the uniqueness of the $\Phi$-subgradients, in light of \cref{thm:phi_subgradients}, $\bar y \in \partial_\Phi f(\bar x)$ is equivalent to $\bar x \in \argmin_{x \in X} f(x) + D_h(\bar y, x)$. The optimality conditions for this minimization problem give us $\nabla_y D_h(\bar y, \bar x) = \nabla f(\bar x)$ or that $\bar y$ is uniquely defined by $\bar x$.
    \end{proof}  
    Since $g = 0$ the primal step in \eqref{eq:majorize_minimize} just imposes that
    \begin{equation}
        x^{k+1} = x^k + \nabla^2 h(x^k)^{-1}\nabla f(x^k),
    \end{equation} 
    which is the Natural gradient method with the metric induced by $h$.

    \begin{example}[anisotropic proximal gradient method (a-PGM) \cite{laude2022anisotropic}] \label{example:apgm} In this example, $-f$ in \eqref{eq:opt} is anisotropically smooth \cite[Definition 2.1]{laude2022anisotropic} relative to some Legendre convex function $h$ such that $\dom h = \bR^n$ with constant $L > 0$, while $g$ is proper, lsc and such that $g+\tfrac{1}{L}\star h(\cdot-y)$ is level-bounded for any $y \in \bR^n$. This algorithm can be incorporated into the $\Phi$-DCA framework by choosing $X = Y = \bR^n$ and $\Phi(x,y) = -\tfrac{1}{L}\star h(x-y)$, as shown in \cite[Section 6]{laude2022anisotropic}.
    \begin{itemize}
        \item Regarding the forward step, the anisotropic descent inequality \cite[Definition 2.1]{laude2022anisotropic} can be written as:
    \begin{equation*}
        f(x) \geq f(\bar x) - \tfrac{1}{L}\star h(x-\bar x + \tfrac{1}{L}\nabla h^*(-\nabla f(\bar x))) + \tfrac{1}{L}\star h(\tfrac{1}{L}\nabla h^*(-\nabla f(\bar x)))
    \end{equation*}
    The inequality above along with \eqref{eq:phi_subgradient_ineq} implies that $\bar y= \bar x - \tfrac{1}{L}\nabla h^*(-\nabla f(\bar x)) \in \partial_\Phi f(\bar x)$, while from \cite[Lemma D.2]{laude2022anisotropic} we know that $\partial_\Phi f(\bar x)$ is a singleton.
    
    \item By using the formula for the forward step in \eqref{eq:majorize_minimize}, the main iteration becomes:
    \begin{equation*}
        x^{k+1} \in \argmin_{x \in \bR^n} g(x) + \tfrac{1}{L}\star h(x-x^k + \tfrac{1}{L}\nabla h^*(-\nabla f(x^k))).
    \end{equation*}
    It is once again evident that the level-boundedness of $g+\tfrac{1}{L}h(\cdot-y)$ for any $y \in \bR^n$ guarantees that \cref{assum:range} holds and as such the algorithm is well-defined.
    \end{itemize}
    \end{example}

    \begin{example}[Methods for $(L_0, L_1)$-smoothness]
        The anisotropic proximal gradient method that is presented in \cref{example:apgm} above was recently shown in \cite{oikonomidis2025nonlinearly} to encompass methods for $(L_0, L_1)$-smoothness \cite[Definition 1]{zhanggradient}. We say that $f \in \cC^2(\bR^n)$ is $(L_0, L_1)$-smooth if
        \begin{equation*}
            \|\nabla^2 f(x)\| \leq L_0 + L_1\|\nabla f(x)\|,
        \end{equation*}
        with $L_0, L_1 > 0$. Note that in light of \cite[Corollary 2.7]{oikonomidis2025nonlinearly} and \cite[Proposition 2.9]{oikonomidis2025nonlinearly}, a $(L_0, L_1)$-smooth function $f$ is $(\delta L_1, L_0/L_1)$-anisotropically smooth relative to $h = -\|\cdot\| - \ln(1-\|\cdot\|)$ (see \cite[Definition 2.1]{oikonomidis2025nonlinearly}) for any $\delta < 1$. 
        
        Consider now \eqref{eq:opt} with $g=0$ and $-f$ $(L_0, L_1)$-smooth. Due to the reasoning of the previous paragraph and \cite[Proposition 2.2]{oikonomidis2025nonlinearly}, $-f$ is $\Phi$-convex with coupling $\Phi(x,y) = - L_0/L_1 ((\delta L_1^{-1}) \star h)(x-y)$ for $h$ as above. The $\Phi$-subdifferential is then given by the forward operator of \cite[Corollary 2.7]{oikonomidis2025nonlinearly}:
        \begin{equation*}
            \partial_\Phi f(x) = \{ x + \frac{\delta}{L_0 + L_1\|\nabla f(x)\|}\nabla f(x)\}.
        \end{equation*}
        Since $g=0$, the main iteration of the $\Phi$-DCA becomes
        \begin{equation*}
            x^{k+1} = x^k + \frac{\delta}{L_0 + L_1\|\nabla f(x^k)\|}\nabla f(x^k),
        \end{equation*}
        thus recovering the algorithm proposed in \cite[Algorithm 1]{gorbunov2024methods} for convex $(L_0, L_1)$-smooth functions.
    \end{example}

    \begin{example}[H\"older proximal gradient method (H-PGM)] \label{example:holder}
        In the previous examples $X$ and $Y$ were subsets of the same Euclidean space. Nevertheless, the $\Phi$-convexity framework allows for more robust parametrizations. Let us consider the case where now $-f$ has H\"older continuous gradients, $\|\nabla f(x) - \nabla f(\bar x)\| \leq H \|x-\bar x\|^\nu$ for $H > 0$, $\nu \in (0, 1]$ and $g$ is such that $g + \tfrac{H}{\nu+1}\|\cdot - y\|^{\nu + 1}$ is level-bounded for any $y \in \bR^n$. 
        \begin{itemize}
            \item The H\"older continuity inequality provides the following upper bound for $-f$ \cite[Equation (15)]{devolder2014first}:
            \begin{equation*}
                -f(x) \leq -f(\bar x) - \langle \nabla f(\bar x),x-\bar x \rangle + \tfrac{H}{\nu+1}\|x - \bar x\|^{\nu + 1}.
            \end{equation*}
            From the inequality above it is straightforward that for $y = (y_1, y_2) \in \bR^n \times \bR^n$, $X = \bR^n$ and $\Phi(x, y) = \langle x - y_1, y_2 \rangle - \tfrac{H}{\nu+1}\|x-y_1\|^{\nu+1}$ we have that $(\bar x, \nabla f(\bar x)) \in \partial_\Phi f(\bar x)$ for all $x \in X = \bR^n$. This specific choice of $\Phi$-subgradients shows the existence of a selection $\phif: \bR^n \to \bR^n \times \bR^n$ of $\partial_\Phi f$ on 
            $X$, given by $\phif (x) = (x, \nabla f(x))$. Since $\nabla f$ is continuous, $\phif$ is a continuous selection. Note that when $g = 0$, choosing $y^k = \phif(x^k)$ in \eqref{eq:DCA}, gives the standard H\"older gradient descent method \cite{bolte2020ah}.

            \item Choosing $y^k = \phif (x^k)$ and substituting back into \eqref{eq:DCA}, we obtain the following update:
            $$
                x^{k+1} \in \argmin_{x \in \bR^n} g(x) - \langle \nabla f(x^k),x-x^k \rangle + \tfrac{H}{\nu+1}\|x-x^k\|^{\nu+1},
            $$
            which in the convex case is the algorithm analyzed in \cite{bredies2008forward} under a general Banach space setting.
        \end{itemize}
    \end{example}

    \begin{example}[exact tensor methods (TM)] \label{example:tensor}
        In the following we adopt the notation from \cite{doikov2020inexact}. Note that $D^p f$ denotes the $p$-th derivative of $f$ and $D^p f(x)[h_1, \dots, h_p]$ the $p$-th directional derivative of $f$ along directions $h_1, \dots, h_p \in \bR^n$. WHen $h_i = h$ for all $1 \leq i \leq p$, the notation $D^p f(x)[h]^p$ is used. Consider the case studied in that paper where $g$ is proper, lsc and convex and the $p$-th derivative of $-f$ is Lipschitz continuous, i.e. $\|D^p f(x) - D^pf(\bar x)\| \leq L_p \|x-\bar x\|$ for all $x, \bar x \in \bR^n$. This implies the following inequality, which describes an upper bound for $-f$ \cite[Equation (3)]{doikov2020inexact}:
        \begin{equation*}
            -f(x) + f(\bar x) + \sum_{i=1}^p \tfrac{1}{i!}D^i f(\bar x) [x - \bar x]^i \leq \tfrac{L_p}{(p+1)!}\|\bar x - x\|^{p+1} \qquad \forall x, \bar x \in \bR^n.
        \end{equation*}
        Therefore, $f$ has an everywhere nonempty $\Phi$-subdifferential for 
        $$\Phi(x,(y_1, Y_2, \ldots, Y_p)) =  \sum_{i=1}^p \tfrac{1}{i!} Y_{i+1} [x - y_1]^i - \tfrac{L_p}{(p+1)!}\|x-y_1\|^{p+1}
        $$
        and $(\bar x, \nabla f(\bar x), \dots, D^p f(\bar x)) \in \partial_\Phi f(\bar x)$. By choosing this specific $\Phi$-subgradient in the $\Phi$-DCA we get the following update:
        \begin{equation}
            x^{k+1} \in \argmin_{x \in \bR^n} g(x) - \sum_{i=1}^p \tfrac{1}{i!}D^i f(x^k) [x - x^k]^i + \tfrac{L_p}{(p+1)!}\|x - x^k\|^{p+1},
        \end{equation}
        which is exactly the update in \cite[Equation (5)]{doikov2020inexact}. As in \cref{example:holder}, the choice $\phif(x) = (x, \nabla f(x), \dots, D^p f(x)) \in \partial_\Phi f(x)$ amounts to a continuous selection of $\partial_\Phi f$ for all $x \in \bR^n$.
    \end{example}

\section{Convergence analysis}
\label{sec:conv_analysis}
\subsection{Asymptotic convergence analysis}
Having demonstrated the generalization properties of the proposed framework, we now move on to its convergence analysis. In order to quantify the progress of the algorithm per iteration, we define the following regularized \textit{gap function} $\gapf : X \times Y \to \exR$, which we use as a measure of stationarity:
\begin{equation} \label{eq:gap}
    \gapf(x, y) := g(x) + g^\Phi(y) -\Phi(x, y).
\end{equation}
for any $(x, y) \in X \times Y$. This function is a generalization of the gap in \cite[Equation (13)]{karimi2016linear}, which was further utilized in \cite{laude2022anisotropic} in order to prove the asymptotic convergence of the proposed method. It has also been considered as a generalized Fenchel-Young loss in \cite{blondel2022learning}. In this paper we go beyond the analysis in the aforementioned works in that we utilize both the quantity defined in \eqref{eq:gap} and the one corresponding to the dual problem \eqref{eq:dual}, thus obtaining tighter results.
Note that the definition of $\gapf$ can be given in terms of the value function of the $\Phi$-DCA, 
\begin{equation} \label{eq:value_func}
   \valf(x, y) := \inf_{z \in X} ~g(z) - f(x) - \Phi(z, y) + \Phi(x, y),
\end{equation}
via the relation $\gapf(x, y) = F(x) - \valf(x, y)$:
\begin{align*}
    \gapf(x, y) &= g(x) + \sup_{z \in X}\{\Phi(z,y) - g(z)\} -\Phi(x, y)
    \\
    &= F(x) + f(x) -\Phi(x, y) - \inf_{z \in X}\{g(z) - \Phi(z,y)\} = F(x) - \valf(x, y)
\end{align*}
The value function generalizes the notion of the forward-backward envelope \cite{stella2017forward} to the broader setting studied in this paper, a fact which can be seen by specializing its definition to the setting described in \cref{example:PGM}.

Next we show some key properties of the gap function that justify its usage as a measure of stationarity:
\begin{proposition}\label{thm:gaps}
The gap function $\gapf(x, y) \geq 0$ for all $x \in X$ and $y \in Y$ is lsc. For any $(x^\star, y^\star) \in \gph \partial_\Phi f$, $\gapf(x^\star, y^\star) = 0$ if and only if $x^\star$ is a $\Phi$-critical point.
\end{proposition}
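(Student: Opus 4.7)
The plan is to peel the three claims apart and handle them in turn, each reducing to a result already stated in the excerpt.

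\textbf{Non-negativity.} I will apply \cref{thm:phi_envelope:fenchel_young} to $g:X \to \exR$: the generalized Fenchel--Young inequality gives $g(x) + g^\Phi(y) \geq \Phi(x, y)$ for every $(x, y) \in X \times Y$, so $\gapf(x, y) = g(x) + g^\Phi(y) - \Phi(x, y) \geq 0$. This is immediate and requires no further estimates.

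\textbf{Lower semicontinuity.} I decompose $\gapf$ into its three summands. By \cref{assum:g}, $g$ is lsc. Since $\Phi$ is jointly continuous on $X \times Y$ by \cref{assum:phi}, the map $y \mapsto \Phi(x, y) - g(x)$ is continuous in $y$ for every fixed $x \in \dom g$, and so $g^\Phi = \sup_{x \in X}\{\Phi(\cdot, x) - g(x)\}$ is lsc on $Y$ as a pointwise supremum of continuous functions. Finally, $-\Phi$ is continuous by \cref{assum:phi}. Hence $\gapf$ is a sum of three (jointly) lsc functions on $X \times Y$, and I just need to cite that such a sum is lsc (taking into account the standard extended-real arithmetic from \cref{assum:lower_bound}, which bounds $F$ from below and hence prevents pathological $\infty - \infty$ behavior).

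\textbf{Characterization of zeros.} Fix $(x^\star, y^\star) \in \gph \partial_\Phi f$, so in particular $y^\star \in \partial_\Phi f(x^\star)$. The key observation is that $\gapf(x^\star, y^\star) = 0$ is exactly the generalized Fenchel--Young equality for $g$:
\begin{equation*}
g(x^\star) + g^\Phi(y^\star) = \Phi(x^\star, y^\star).
\end{equation*}
By the equivalence (ii)$\Leftrightarrow$(i) in \cref{thm:phi_subgradients} applied to $g$, this is in turn equivalent to $y^\star \in \partial_\Phi g(x^\star)$. Combining with the hypothesis $y^\star \in \partial_\Phi f(x^\star)$, this yields $y^\star \in \partial_\Phi f(x^\star) \cap \partial_\Phi g(x^\star) \neq \emptyset$, which is precisely $\Phi$-criticality of $x^\star$ (with witness $y^\star$). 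The converse direction is handled by reading the same chain of equivalences backwards.

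I do not expect a serious obstacle; the whole argument is a direct assembly of \cref{thm:phi_envelope,thm:phi_subgradients} and the continuity/semicontinuity assumptions \cref{assum:phi,assum:g}. The only subtlety worth mentioning explicitly is ensuring that the lsc argument is not spoiled by extended-real values, which is handled by noting that the summands never combine into an indeterminate form on $\dom g \times Y$ thanks to \cref{assum:g,assum:lower_bound}.
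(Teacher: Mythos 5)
Your proposal is correct and follows essentially the same route as the paper: Fenchel--Young (\cref{thm:phi_envelope}) for non-negativity, the decomposition of $\gapf$ into $g$ (lsc by assumption), $g^\Phi$ (lsc as a supremum of continuous functions), and $-\Phi$ (continuous) for lower semicontinuity, and the equivalence in \cref{thm:phi_subgradients} between the Fenchel--Young equality for $g$ and $y^\star \in \partial_\Phi g(x^\star)$ for the characterization of zeros. The only cosmetic slip is writing $\Phi(\cdot,x)$ where $\Phi(x,\cdot)$ is meant in the supremum defining $g^\Phi$; otherwise nothing to add.
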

\begin{proof}
    By the definition of $\gapf(x, y)$ and \cref{thm:phi_envelope}, we have that $\gapf(x, y) \geq 0$ for all $(x, y) \in \dom \gapf$. Moreover, $g$ is lsc by assumption, $g^\Phi$ is lsc as the pointwise supremum over continuous functions and $\Phi$ is continuous, implying that $\gapf$ is lsc. 
    Notice that $\gapf(x^\star, y^\star) = 0$ implies that $g^\Phi(y^\star) = \Phi(x^\star, y^\star) - g(x^\star)$, which by \cref{thm:phi_subgradients} further means that $y^\star \in \partial_\Phi g(x^\star)$. Since $(x^\star, y^\star) \in \gph \partial_\Phi f$ we have that $y^\star \in \partial_\Phi f(x^\star) \cap \partial_\Phi g(x^\star)$. On the other hand, since $y^\star \in \partial_\Phi g(x^\star)$, from \cref{thm:phi_subgradients} we have that $g(x^\star) + g^\Phi(y^\star) - \Phi(x^\star, y^\star) = 0$, which is the claimed result.
\end{proof}
\begin{remark}
    Note that since the dual cost $G$ is a true difference of $\Phi$-convex function, it is straightforward that the dual gap function $\gapg$ enjoys the properties of $\gapf$ described in \cref{thm:gaps}. We utilize both gap functions in our subsequent analysis, in order to strictly characterize the decrease of the function values.
\end{remark}

Now that we have established the desired properties of the gap function, we proceed to the convergence results of the proposed method. First, we state and prove the following sufficient decrease property:
\begin{lemma}[sufficient decrease] \label{thm:sufficient_descent} 
Let $\{(x^{k}, y^{k})\}_{k\in \bN_0}$ be the iterates generated by the algorithm \eqref{eq:DCA}.
Then the following sufficient decrease property holds true for all $k \in \bN_0$:
    \begin{equation} \label{sufficient_descrease}
        F(x^{k+1}) = F(x^k) - (\gapf(x^k, y^k) + \gapg(y^k, x^{k+1})),
    \end{equation}
    i.e. the sequence of function values is nonincreasing.
\end{lemma}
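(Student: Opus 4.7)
The plan is to expand the right-hand side using the definitions of $\gapf$ and $\gapg$, and then exploit the Fenchel--Young-type equalities supplied by \cref{thm:phi_subgradients} for each of the two steps of the scheme, which will make everything collapse.

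Concretely, I would start by writing out
\begin{align*}
\gapf(x^k, y^k) + \gapg(y^k, x^{k+1})
&= \bigl[g(x^k) + g^\Phi(y^k) - \Phi(x^k, y^k)\bigr] \\
&\quad + \bigl[f^\Phi(y^k) + f(x^{k+1}) - \Phi(x^{k+1}, y^k)\bigr],
\end{align*}
using that $f^{\Phi\Phi} = f$ by $\Phi$-convexity of $f$ (\cref{thm:phi_envelope}) so that $\gapg(y^k, x^{k+1})$ can be written with $f$ in place of $f^{\Phi\Phi}$. The forward step $y^k \in \partial_\Phi f(x^k)$ yields, by the equivalence (i)$\Leftrightarrow$(ii) of \cref{thm:phi_subgradients}, the equality $f(x^k) + f^\Phi(y^k) = \Phi(x^k, y^k)$, hence $f^\Phi(y^k) - \Phi(x^k, y^k) = -f(x^k)$. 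For the backward step, $x^{k+1} \in \argmin_{x\in X} g(x) - \Phi(x, y^k)$, so the equivalence (iii)$\Leftrightarrow$(ii) of the same proposition gives $g(x^{k+1}) + g^\Phi(y^k) = \Phi(x^{k+1}, y^k)$, i.e., $g^\Phi(y^k) - \Phi(x^{k+1}, y^k) = -g(x^{k+1})$.

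Substituting these two cancellations into the expanded sum, all the $\Phi$-terms disappear and I am left with
\[
\gapf(x^k, y^k) + \gapg(y^k, x^{k+1}) = g(x^k) - g(x^{k+1}) - f(x^k) + f(x^{k+1}) = F(x^k) - F(x^{k+1}),
\]
which rearranges to the claimed identity. Nonincrease of the function values then follows immediately from \cref{thm:gaps}, which ensures both gap terms are nonnegative.

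I do not foresee a significant obstacle: the key point is simply to recognize that both steps of \eqref{eq:DCA} are captured by the Fenchel--Young-type equality case of \cref{thm:phi_subgradients}, and that the backward step retains this property even though $g$ is not assumed $\Phi$-convex, because the implication (iii)$\Rightarrow$(ii) in \cref{thm:phi_subgradients} requires no convexity-type assumption. A minor bookkeeping point to double-check is the correct identification of $\gapg$ on the dual side as $\gap_f^\Phi(y, x) = f^\Phi(y) + f(x) - \Phi(x, y)$, which is legitimate precisely because $f = f^{\Phi\Phi}$.
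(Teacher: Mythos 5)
Your proposal is correct and follows essentially the same route as the paper: expand both gap functions, use the Fenchel--Young equality $f(x^k)+f^\Phi(y^k)=\Phi(x^k,y^k)$ from the forward step and $g(x^{k+1})+g^\Phi(y^k)=\Phi(x^{k+1},y^k)$ from the backward step (the latter needing no $\Phi$-convexity of $g$), and let the $\Phi$-terms cancel. Your bookkeeping of $\gapg$ via $f^{\Phi\Phi}=f$ matches the paper's usage, so nothing further is needed.
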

\begin{proof}
We have the following
    \begin{align*}
        \gapf(x^k, y^k) + \gapg(y^k, x^{k+1}) 
        &=
        g(x^k) + g^\Phi(y^k) - \Phi(x^k, y^k) + f(x^{k+1}) + f^\Phi(y^k) - \Phi(x^{k+1}, y^k)
        \\
        &=
        g(x^k) - g(x^{k+1}) + f(x^{k+1}) - f(x^k)
        \\
        &= F(x^k) - F(x^{k+1}),
    \end{align*}
    where the second equality follows by the fact that $x^{k+1} \in \argmin_{x \in X} g(x) - \Phi(x,y^k)$, which implies that $g^\Phi(y^k) = \Phi(x^{k+1}, y^k) - g(x^{k+1})$ and $f^\Phi(y^k) - \Phi(x^k, y^k) = f^{\Phi \Phi}(x^k) = f(x^k)$ by the $\Phi$-convexity of $f$.
\end{proof}
It is important to note that the primal and dual gap function analysis allows us to not only obtain a tight characterization of the decrease of the function values but also to go beyond the standard majorization-minimization tools. Indeed, the proof of \cref{thm:sufficient_descent} is solely based on the notions of (generalized) convexity: $\Phi$-conjugates and the $\Phi$-Fenchel--Young theorem. The decrease of the function values in one step of the algorithm is illustrated in \cref{fig:suff_dec_gap}.
\begin{figure}[!h] 
\centering
\begin{minipage}{.5\textwidth}
  \centering
  \includegraphics[]{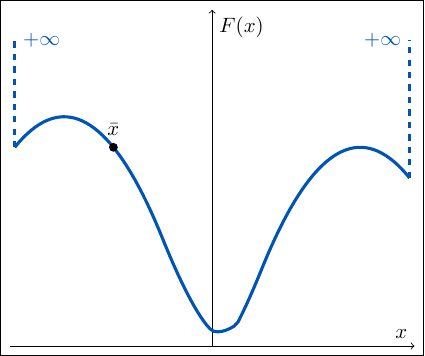}
  \label{fig:sub1}
\end{minipage}%
\begin{minipage}{.5\textwidth}
  \centering
  \includegraphics[]{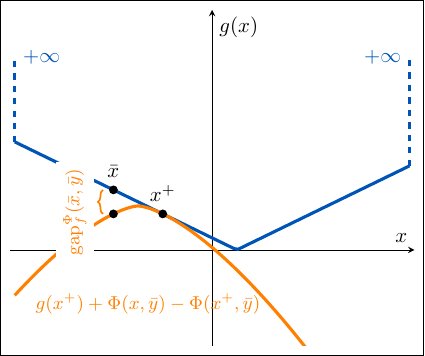}
  \label{fig:sub2}
\end{minipage}%
\vskip\baselineskip
\begin{minipage}{.5\textwidth}
  \centering
  \includegraphics[]{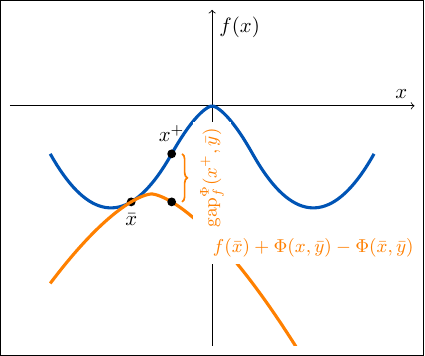}
  \label{fig:sub3}
\end{minipage}%
\begin{minipage}{.5\textwidth}
  \centering
  \includegraphics[]{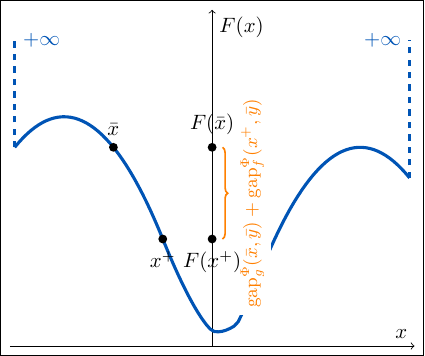}
  \label{fig:sub4}
\end{minipage}%
\caption{Illustration of the $\Phi$-DC decomposition of the function $F(x) := g(x)-f(x)$ with $g(x):=\tfrac{1}{2}|x-\tfrac{1}{2}| + \delta_{[-4,4]}(x)$, $f(x):=-|x|^{1.5} \text{ if } x \in (-1, 1), \tfrac{1}{2}x^2-\tfrac{5}{2}|x|+1 \text{ otherwise}$ and $\Phi(x,y)=-\tfrac{3}{2}|\tfrac{x-y}{2}|^{1.5}$ and application of the proposed scheme: $\bar x$ is a point in $X$ and $x^+$ is the corresponding $\Phi$-DCA step. Starting from the top left corner and continuing clockwise: the function $F$; $g$ along with its $\Phi$-minorant; function $f$ along with its $\Phi$-minorant; the sum of both gap functions as the decrease of $F$.}\label{fig:suff_dec_gap}
\end{figure}

Next we utilize the decrease of the function values as well as the lower boundedness of $F$ in order to show that the sum of the gap functions goes to $0$ and as a consequence, that every limit point $(x^\star, y^\star)$ of $\{(x^k, y^k)\}_{k \in \bN_0}$ is such that $x^\star$ is a $\Phi$-critical point.

\begin{theorem}
\label{thm:asymptotic_convergence}
    Let $\{(x^{k}, y^{k})\}_{k \in \bN_0}$ be the sequence of iterates generated by \eqref{eq:DCA}. Then,
    \begin{align} \label{eq:limit_convergence}
        \lim_{k\to \infty} \gapf(x^k, y^k) = 0 \text{ and } \lim_{k\to \infty} \gapg(y^k, x^{k+1}) = 0.
    \end{align}
    Moreover, every limit point $(x^\star, y^\star)$ with $(x^\star, y^\star) \in X \times Y$ of the sequence of iterates satisfies $y^\star \in \partial_\Phi f(x^\star) \cap \partial_\Phi g(x^\star)$, i.e. $x^\star$ is a $\Phi$-critical point of $F$.
\end{theorem}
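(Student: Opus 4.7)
The overall strategy is to combine the sufficient decrease identity from the preceding lemma with the lower boundedness of $F$ to force the gap functions to zero, and then pass the $\Phi$-subgradient inequality to the limit using lower semicontinuity of $f$.

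\textbf{Step 1 (series convergence and \eqref{eq:limit_convergence}).} Summing the identity $F(x^{k+1}) = F(x^k) - (\gapf(x^k, y^k) + \gapg(y^k, x^{k+1}))$ from $k=0$ to $N$ telescopes to
\begin{equation*}
\sum_{k=0}^{N} \bigl(\gapf(x^k, y^k) + \gapg(y^k, x^{k+1})\bigr) = F(x^0) - F(x^{N+1}) \leq F(x^0) - \alpha,
\end{equation*}
where $\alpha > -\infty$ is the lower bound on $F$ from \cref{assum:lower_bound}. Since both gap functions are nonnegative by \cref{thm:gaps} and by its analogue for $\gapg$, the partial sums are monotone and bounded, hence the full series converges. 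In particular each summand vanishes, yielding $\gapf(x^k, y^k)\to 0$ and $\gapg(y^k, x^{k+1})\to 0$.

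\textbf{Step 2 (identifying limit points).} Let $(x^\star, y^\star)\in X\times Y$ be a limit point of $\{(x^k, y^k)\}_{k\in\bN_0}$, say with $(x^{k_j}, y^{k_j})\to (x^\star, y^\star)$. Since $\gapf$ is lsc by \cref{thm:gaps},
\begin{equation*}
0 \leq \gapf(x^\star, y^\star) \leq \liminf_{j\to\infty} \gapf(x^{k_j}, y^{k_j}) = 0.
\end{equation*}
To invoke \cref{thm:gaps} and conclude $\Phi$-criticality, it remains to show $y^\star \in \partial_\Phi f(x^\star)$, i.e. that $(x^\star, y^\star)\in\gph\partial_\Phi f$.

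\textbf{Step 3 (closedness of $\gph\partial_\Phi f$).} This is the crux of the argument. From the algorithm we have $y^{k_j}\in\partial_\Phi f(x^{k_j})$, giving for every $x\in X$
\begin{equation*}
f(x^{k_j}) \leq f(x) - \Phi(x, y^{k_j}) + \Phi(x^{k_j}, y^{k_j}).
\end{equation*}
The right-hand side converges to $f(x) - \Phi(x, y^\star) + \Phi(x^\star, y^\star)$ by joint continuity of $\Phi$ (see \cref{assum:phi}). Since $f$ is $\Phi$-convex, it is the pointwise supremum of the continuous functions $\Phi(\cdot, y_i) - \beta_i$ and therefore lsc, yielding $f(x^\star) \leq \liminf_j f(x^{k_j})$. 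Combining these two facts gives
\begin{equation*}
f(x^\star) \leq f(x) - \Phi(x, y^\star) + \Phi(x^\star, y^\star) \qquad \forall x \in X,
\end{equation*}
which is exactly the $\Phi$-subgradient inequality $y^\star \in \partial_\Phi f(x^\star)$. Combined with $\gapf(x^\star, y^\star) = 0$, \cref{thm:gaps} yields the $\Phi$-criticality of $x^\star$.

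\textbf{Expected obstacle.} The main subtlety is Step 3: we need to pass to the limit in a $\Phi$-subgradient inequality, which requires lower semicontinuity of $f$ on the full set $X$ (not merely at $x^\star$). This is a genuine structural input, delivered for free by $\Phi$-convexity but not by properness alone; it is exactly the analogue, in generalized convex analysis, of the standard fact that the graph of a convex subdifferential is closed.
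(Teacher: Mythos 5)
Your proposal is correct and follows essentially the same route as the paper: telescoping the sufficient decrease identity against the lower bound $\alpha$, using lower semicontinuity of $\gapf$ at the limit point, and passing the $\Phi$-subgradient inequality to the limit via continuity of $\Phi$ and lower semicontinuity of $f$. The only (welcome) addition is that you explicitly justify the lower semicontinuity of $f$ from its $\Phi$-convexity as a supremum of continuous minorants, a fact the paper's proof uses without comment.
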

\begin{proof}
    Thanks to \cref{thm:sufficient_descent} we have for all $k \geq 0$:
    \begin{equation*}
        F(x^{k+1}) = F(x^k) - (\gapf(x^k, y^k) + \gapg(y^k, x^{k+1})).
    \end{equation*}
From \cref{assum:lower_bound}, we know that $-\infty < \alpha \leq F(x^{K})$ for any $k \in \bN_0$ and so summing the equality above, we obtain:
\begin{align}
-\infty < \inf F - F(x^0) &\leq F(x^{K}) -F(x^0) \notag
\\
&= \sum_{k=0}^{K-1} F(x^{k+1}) -F(x^k)\leq -\sum_{k=0}^{K-1}[\gapf(x^k, y^k) + \gapg(y^k, x^{k+1})] \label{eq:inequality_telescope}.
\end{align}
Thus, we obtain the following inequality for the sum of the gap functions:
\begin{align} \label{eq:gap_sum}
    \sum_{k=0}^{K-1}[\gapf(x^k, y^k)+ \gapg(y^k, x^{k+1})] \leq F(x^0) - \inf F,
\end{align}
which in turn implies that $\{\sum_{k=0}^{K-1}[ \gapf(x^k, y^k)+ \gapg(y^k, x^{k+1})] \}_{K \in \bN_0}$ is bounded and thus \eqref{eq:limit_convergence} follows, since $\gapf(x^k, y^k) \geq 0$ and $\gapg(y^k, x^{k+1}) \geq 0$ for all $k \in \bN_0$.

Now, let $(x^\star,y^\star)$ be a limit point of the sequence of iterates and $(x^{k_j},y^{k_j}) \to (x^\star,y^\star)$ be a corresponding subsequence.
Since $\gapf$ is lsc we have that
$$
0 \leq \gapf(x^\star, y^\star) \leq \lim_{j \to \infty} \gapf(x^{k_j},y^{k_j}) = 0,
$$
where the equality follows from \eqref{eq:limit_convergence}. Since $f$ is lsc on $X$ and $\Phi$ is continuous on $X \times Y$ and $y^{k_j} \in \partial_\Phi f(x^{k_j})$, we have that
\begin{equation} \label{eq:phi_subg_osc}
    f(x) \geq \lim_{j \rightarrow \infty} f(x^{k_j}) + \Phi(x, y^{k_j}) - \Phi(x^{k_j}, y^{k_j}) \qquad \forall x\in X,
\end{equation}
and thus $f(x) \geq f(x^\star) + \Phi(x, y^\star) -\Phi(x^\star, y^\star)$, which by the definition of the $\Phi$-subgradient in turn means that $y^\star \in \partial_\Phi f(x^\star)$. Finally, we use \cref{thm:gaps} to obtain the desired result.
\end{proof}
Note that the reasoning in the last lines of the proof of \cref{thm:asymptotic_convergence} and specifically inequality \eqref{eq:phi_subg_osc} shows that the $\Phi$-subdifferential is an outer semicontinuous mapping in the setting of our paper, as by choosing any $x^k \to \bar x \in X$ and $y^k \to \bar y \in Y$ such that $y^k \in \partial_\Phi f(x^k)$, we have that $\bar y \in \partial_\Phi f(\bar x)$.
\begin{remark}
    Inequality \eqref{eq:gap_sum} is important to our analysis, since it can be used to immediately obtain the convergence rates of well-known algorithms that are special instances of the $\Phi$-DCA. In order to illustrate this fact, let us switch again to the setting of \cref{example:PGM} and assume moreover that $g$ is convex. Then, we have that
    \begin{align*}
        \gapf(x^k, x^k + \tfrac{1}{L}\nabla f(x^k)) = \langle \nabla f(x^k),x^{k+1}-x^k \rangle - \tfrac{L}{2}\|x^{k+1}-x^k\|^2 + g(x^k)-g(x^{k+1})
    \end{align*}
    Using the optimality conditions for the update of $x^{k+1}$ and the convexity of $g$ between points $x^k$ and $x^{k+1}$ we obtain:
    \begin{align*}
        \gapf(x^k, x^k- \tfrac{1}{L}\nabla f(x^k)) &\geq \langle \nabla f(x^k),x^{k+1}-x^k \rangle - \tfrac{L}{2}\|x^{k+1}-x^k\|^2 
        \\
        & \langle \nabla f(x^k) + L(x^k-x^{k+1}),x^k-x^{k+1} \rangle
        \\
        &= \tfrac{L}{2}\|x^{k+1}-x^k\|^2
    \end{align*}
    Then, \eqref{eq:gap_sum} readily implies that
    \begin{equation*}
        \min_{i = 1,\dots,K}\|x^{i+1}-x^i\|^2 \leq \tfrac{2(F(x^0)-\inf F)}{L (K+1)},
    \end{equation*}
    which is a classical result for the convergence of the proximal gradient algorithm in the nonconvex setting for stepsize $\tfrac{1}{L}$.
\end{remark}
Next we specialize the previous result to the case where $X = \bR^n$ and $F$ is a level bounded function, which are standard assumptions for subsequential convergence in the nonconvex setting. Moreover we assume that $\partial_\Phi f$ is bounded, which is a mild requirement. 
\begin{corollary}
    Suppose that $X = \bR^n $ and $ Y = \bR^m$ and that $F$ is level bounded. Assume moreover that $\partial_\Phi f$ is locally bounded on $\bR^n$. Then, the set of limit points of $\{x^k\}_{k \in \bN_0}$ is nonempty and every limit point $x^\star$ is a $\Phi$-critical point.
\end{corollary}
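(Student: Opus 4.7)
The plan is to chain together level boundedness of $F$, the sufficient decrease property from \cref{thm:sufficient_descent}, and the local boundedness of $\partial_\Phi f$ to produce convergent subsequences of both $\{x^k\}$ and $\{y^k\}$, and then invoke \cref{thm:asymptotic_convergence} to conclude $\Phi$-criticality of the limit.

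First, I would observe that \cref{thm:sufficient_descent} yields $F(x^{k+1}) \leq F(x^k)$ for all $k \in \bN_0$, so the entire sequence satisfies $x^k \in \lev_{\leq F(x^0)} F$. Because $F$ is level bounded on $\bR^n$, this sublevel set is bounded, hence $\{x^k\}$ is bounded and therefore has at least one convergent subsequence $x^{k_j} \to x^\star$, proving that the set of limit points is nonempty.

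Next, fix such a subsequence and consider the associated dual iterates $y^{k_j} \in \partial_\Phi f(x^{k_j})$. Since $\{x^{k_j}\}$ converges, it lies eventually in some bounded neighborhood $V$ of $x^\star$; by local boundedness of $\partial_\Phi f$ at $x^\star$, the set $\partial_\Phi f(V)$ is bounded, so $\{y^{k_j}\}$ is bounded in $\bR^m$. Extracting a further subsequence (without relabeling) I may therefore assume $y^{k_j} \to y^\star$ for some $y^\star \in \bR^m = Y$.

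Finally, $(x^\star, y^\star) \in X \times Y$ is a limit point of the full sequence $\{(x^k, y^k)\}_{k \in \bN_0}$, so applying the limit point statement of \cref{thm:asymptotic_convergence} delivers $y^\star \in \partial_\Phi f(x^\star) \cap \partial_\Phi g(x^\star)$, i.e. $x^\star$ is a $\Phi$-critical point of $F$. The only potential subtlety is ensuring the dual companion sequence admits a limit along the same subsequence, but this is precisely what the local boundedness hypothesis on $\partial_\Phi f$ is designed to handle, so no genuine obstacle arises.
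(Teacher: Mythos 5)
Your proposal is correct and follows essentially the same route as the paper: sufficient decrease plus level boundedness gives a bounded primal sequence, local boundedness of $\partial_\Phi f$ gives a bounded dual subsequence, and \cref{thm:asymptotic_convergence} finishes. The only cosmetic difference is that you invoke local boundedness directly at the limit point $x^\star$, whereas the paper cites a compactness result to bound $\{y^{k_j}\}$ over the whole bounded sequence; both are valid.
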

\begin{proof}
    First note that from the decrease \cref{thm:sufficient_descent} of the function values along with the level boundedness of $F$, $\{x^k\}_{k \in \bN_0}$ is a bounded sequence and as such the set of limit points is nonempty. Now, consider a subsequence $x^{k_j} \to x^\star$. Since $\partial_\Phi f$ is locally bounded and $\{x^k\}_{k \in \bN_0}$ is bounded, $\{y^{k_j}\}_{j \in \bN_0}$ is also bounded in light of \cite[Proposition 5.15]{RoWe98}. Therefore we can assume, up to extracting a subsequence, that $y^{k_j} \to y^\star \in Y$. The claimed result now follows from \cref{thm:asymptotic_convergence}.
\end{proof}

\subsection{Sublinear convergence rate analysis}
Our analysis facilitates a $\Phi$-Bregman proximal point interpretation of the algorithm which we provide next: in the case where the $\Phi$-subdifferential of $f$ admits a continuous selection along the iterates of the algorithm, the method can be viewed as a $\Phi$-Bregman proximal point algorithm. 

More precisely, for this subsection we assume that 
there exists a continuous selection of $\partial_\Phi f$ on $\intr X$, which we will denote by $\phif$. Therefore, we have that $\phif : X \to Y$ is such that $\phif(x) \in \partial_\Phi f(x)$ for all $x \in \intr X$. Then, we can define the $\Phi$-Bregman divergence generated by $\Phi$-convex $f$ between two points as follows:
\begin{equation} \label{eq:phibreg}
    \phibreg(x, \bar x) := f(x) - f(\bar x) - \Phi(x, \phif(\bar x)) + \Phi(\bar x, \phif(\bar x)) \qquad \forall x \in X, \intr X 
\end{equation}
It is straightforward that this quantity is the dual gap function $\gapg$ of the $\Phi$-DCA algorithm, composed with $\phif$ and so from the $\Phi$-Fenchel-Young inequality we have that $\phibreg(x, \bar x) \geq 0$. With this insight, we can rewrite \eqref{eq:majorize_minimize} as a $\Phi$-Bregman proximal point method in the following way:
\begin{equation} \label{eq:phi_breg_iter}
    x^{k+1} \in \argmin_{x \in X}F(x) + \phibreg(x, x^k).
\end{equation}
Note that as formally shown in \cref{lemma:bgm_linear}, when $\Phi(x, y) = -L D_h(x,y)$ and $-f$ is $L$-smooth relative to $h$, the $\Phi$-Bregman divergence becomes the classical Bregman divergence generated by $Lh + f$.

\begin{remark}
In contrast to classical methods, we do not assume that $\phibreg(x, \bar x) > 0$ for $x \neq \bar x$. In most of the examples of \cref{sec:examples}, such an inequality could be imposed by considering a smaller stepsize. Let $-f$ be $L$-smooth on $\bR^n$ and $\Phi(x,y) = -\tfrac{1}{2\gamma}\|x-y\|^2$. Then, for $\gamma \leq \tfrac{1}{L}$, $-f$ has a nonempty and single-valued $\Phi$-subdifferential for all $x \in \bR^n$ and from the standard Euclidean descent lemma we have that
\begin{align*}
        -f(x) 
        &\leq -f(\bar x) - \langle \nabla f(\bar x),x-\bar x \rangle + \tfrac{L}{2}\|x-\bar x\|^2
        \\
        &= -f(\bar x) - \langle \nabla f(\bar x),x-\bar x \rangle + \tfrac{1}{2\gamma}\|x-\bar x\|^2 - \tfrac{1-\gamma L}{2}\|x-\bar x\|^2,
\end{align*}
for all $x \in \bR^n$. With some algebraic manipulations (see also \cref{lemma:pgm_linear:gaps}) we can write $\phibreg(x, \bar x) = f(x) -f(\bar x) - \langle \nabla f(\bar x),x-\bar x \rangle + \tfrac{1}{2\gamma}\|x-\bar x\|^2$ and as such from the inequality above we obtain $\phibreg(x, \bar x) \geq \tfrac{1-\gamma L}{2}\|x-\bar x\|^2$, which in turn implies that $\phibreg(x, \bar x) > 0$ for $x \neq \bar x$ and $\gamma < \tfrac{1}{L}$. 
\end{remark}

We next state the assumptions we consider for the remainder of this subsection:
\begin{assumenum}[resume]
    \item $X$ is closed and convex. \label{assum:convex_set}
    \item There exists a continuous selection 
    $\phif$ of $\partial_\Phi f$ on $\intr X$ and $\ran (\partial_\Phi g)^{-1} \subseteq \intr X$. \label{assum:cont_sel}
\end{assumenum}
\Cref{assum:convex_set} is a classical assumption in the field of convex optimization while \cref{assum:cont_sel} describes the existence of a continuous selection of $\partial_\Phi f$ and enforces the iterates of the algorithm to stay in the interior of $X$ the latter being a standard assumption in the related literature \cite{bolte2018first, bauschke2017descent}.

This viewpoint allows us to obtain a sublinear convergence rate for the proposed algorithm under some generally mild conditions. First we analyze the convergence of the algorithm under a generalization of the classical three-point property of Bregman divergences. A similar condition was also considered in \cite{léger2023gradient} to in order to obtain a sublinear rate.
\begin{theorem} \label{thm:phi_ppa_three_point}
    Let $\{x^k\}_{k \in \bN_0}$ be the sequence of iterates generated by \eqref{eq:phi_breg_iter} and \cref{assum:convex_set,assum:cont_sel} hold true. Let moreover $F$ be convex and $f$ and $\Phi(\cdot,y)$ be differentiable on $\intr X$. 
    If the following three-point property holds:
        \begin{equation}
            \langle \nabla f(x^{k+1}) - \nabla_x \Phi(x^{k+1}, \phif(x^k)), x-x^{k+1} \rangle \leq \phibreg(x, x^{k})-\phibreg(x, x^{k+1})-\phibreg(x^{k+1}, x^k),
        \end{equation}
        for $x \in X$, then for all $K \geq 1$:
        \begin{equation}
            F(x^K) - F(x) \leq \tfrac{\phibreg(x, x^0)}{K}.
        \end{equation}
\end{theorem}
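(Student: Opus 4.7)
The plan is to combine the variational inequality coming from the minimization rule defining $x^{k+1}$ with the assumed three-point property and then to telescope and exploit the monotonicity of the function values granted by \cref{thm:sufficient_descent}.

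First I would rewrite the update \eqref{eq:phi_breg_iter} in the equivalent ``backward-step'' form $x^{k+1}\in\argmin_{x\in X}\{g(x)-\Phi(x,\phif(x^k))\}$ that follows after cancelling the constant-in-$x$ terms of $\phibreg(\cdot,x^k)$ and the two $f$-terms. Writing Fermat's rule for this convex-plus-constraint problem (using that $\Phi(\cdot,\phif(x^k))$ is differentiable on $\intr X$ by hypothesis and that $x^{k+1}\in\intr X$ by \cref{assum:cont_sel}) yields
\begin{equation*}
\nabla_x\Phi(x^{k+1},\phif(x^k))\in \partial g(x^{k+1})+N_X(x^{k+1}).
\end{equation*}
Subtracting $\nabla f(x^{k+1})$ on both sides and invoking the subdifferential sum rule for $F=g-f$ (valid because $F$ is convex and $f$ is differentiable at $x^{k+1}$) produces an element $\xi\in\partial F(x^{k+1})$ of the form $\xi=\nabla_x\Phi(x^{k+1},\phif(x^k))-\nabla f(x^{k+1})-\zeta$ with $\zeta\in N_X(x^{k+1})$.

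Second, the convex subgradient inequality for $F$ at $x^{k+1}$ tested against an arbitrary $x\in X$, together with $\langle\zeta,x-x^{k+1}\rangle\le 0$, gives
\begin{equation*}
F(x^{k+1})-F(x)\le \langle\nabla f(x^{k+1})-\nabla_x\Phi(x^{k+1},\phif(x^k)),\,x-x^{k+1}\rangle.
\end{equation*}
At this point I would invoke the assumed three-point property to upper-bound the right-hand side by $\phibreg(x,x^k)-\phibreg(x,x^{k+1})-\phibreg(x^{k+1},x^k)$, and then drop the non-positive term $-\phibreg(x^{k+1},x^k)$ since $\phibreg\ge 0$ (a consequence of the $\Phi$-Fenchel--Young inequality, already observed after \eqref{eq:phibreg}).

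Third, I would sum the resulting inequality $F(x^{k+1})-F(x)\le \phibreg(x,x^k)-\phibreg(x,x^{k+1})$ from $k=0$ to $K-1$. The right-hand side telescopes to $\phibreg(x,x^0)-\phibreg(x,x^K)\le \phibreg(x,x^0)$. For the left-hand side I would apply \cref{thm:sufficient_descent}, which guarantees $F(x^{k+1})\ge F(x^K)$ for every $k+1\le K$; hence the sum is lower bounded by $K(F(x^K)-F(x))$, yielding the claim after dividing by $K$. The main obstacle is the very first step, namely justifying the sum rule $\partial F=\partial g-\nabla f$ and controlling the normal cone contribution $\zeta$; both are standard but deserve explicit mention because in the present generalized convexity setting neither $g$ nor $-f$ need be convex individually, and only the combined convexity of $F$ is postulated.
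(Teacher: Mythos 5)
Your proposal is correct and follows essentially the same route as the paper's proof: optimality condition for the inner minimization, the convex subgradient inequality for $F$ to obtain $F(x^{k+1})-F(x)\le\langle\nabla f(x^{k+1})-\nabla_x\Phi(x^{k+1},\phif(x^k)),x-x^{k+1}\rangle$, then the three-point property, telescoping, and monotonicity of $\{F(x^k)\}$. The only difference is cosmetic: the paper writes the variational inequality directly for $F+\phibreg(\cdot,x^k)$ with $v^{k+1}\in\partial F(x^{k+1})$, whereas you pass through the equivalent $g-\Phi(\cdot,\phif(x^k))$ formulation and a sum rule with the normal cone; your version is, if anything, slightly more explicit about the technicalities the paper glosses over.
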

\begin{proof}    
    Consider $v^{k+1} \in \partial F(x^{k+1})$ and the variational inequality for the solution of the inner minimization problem in \eqref{eq:phi_breg_iter}:
    \begin{equation*}
        \langle v^{k+1} + \nabla_x \phibreg(x^{k+1}, x^k), x-x^{k+1}\rangle \geq 0,
    \end{equation*}
    for any $x \in X$. By the convex subgradient inequality for $F$ between points $x \in X$ and $x^{k+1}$ we can further bound
    \begin{equation*}
        \langle \nabla_x \phibreg(x^{k+1}, x^k), x-x^{k+1}\rangle \geq F(x^{k+1})-F(x).
    \end{equation*}
    In light of the assumption of the theorem, the above inequality further means that
    \begin{equation} \label{eq:rate_ineq_ppa}
        F(x^{k+1})-F(x) \leq \phibreg(x, x^k) - \phibreg(x, x^{k+1}) - \phibreg(x^{k+1}, x^k)
    \end{equation}
    Summing \eqref{eq:rate_ineq_ppa} from $k = 0$ to $K \in \bN$ we get
    \begin{equation*}
        \sum_{k=0}^K F(x^{k+1})-F(x) \leq \phibreg(x, x^0).
    \end{equation*}
    Now, since the sequence $F(x^k)$ is nonincreasing we can further bound the inequality above as
    \begin{equation*}
        F(x^{K})-F(x) \leq \tfrac{\phibreg(x, x^0)}{K},
    \end{equation*}
    which is the claimed result.
\end{proof}
\begin{remark}
    At first glance, the three-point property assumed in \cref{thm:phi_ppa_three_point} might seem unintuitive. However, it is important to note that it is a natural extension of the well-known three-point property of the Bregman divergences \cite[p. 4]{bolte2018first}. As such, it is straightforward to demonstrate that our three-point property holds for these types of couplings. This directly implies that our analysis successfully recovers the classical sublinear convergence rate of the Bregman proximal gradient method when applied to convex problems. Furthermore, \cref{thm:phi_ppa_three_point} also recovers the sublinear rate analysis for the standard DCA in the case where both $f$ and $g$ are smooth and moreover $F$ is convex, a result which was first stated in \cite[Corollary 1]{faust2023bregman}.
\end{remark}
A sublinear convergence rate can also be achieved when a different property than the (extension of) the three-point property holds. This is captured in the terms of subhomogeneity of a function, adapted from \cite{aze1995uniformly}:
\begin{definition}[subhomogeneity of order $\nu$]
    Let $\omega:\bR^n \to \bR_+$ be convex and $\nu \geq 1$. If $\omega(\theta x) \leq \theta^{\nu} \omega(x)$ for any $\theta \in [0,1]$ we say that $\phi$ is subhomogeneous of order $\nu$. In particular this means that $\omega(0)=0$.
\end{definition}
\begin{remark}
    Every convex function $\omega:\bR^n \to \bR$ with $\omega(0) = 0$, is subhomogeneous of order $\nu = 1$. This follows directly from the convexity inequality:
    \begin{equation*}
        \omega(\theta x) = \omega((1-\theta)0 + \theta x) \leq \theta \omega(x).
    \end{equation*}
    The most important type of functions that are subhomogeneous of order $\nu$ are the norms to some power $\nu$, $\|\cdot\|^\nu$, which are actually homogeneous, $\|\theta x\|^\nu = \theta^\nu \|x\|^\nu$.
\end{remark}
In this setting, we can modify the $\Phi$-DCA in order to incorporate an averaging scheme (see for example \cite[Algorithm 3]{doikov2020inexact}). For $x^0 \in \intr X$, the $\Phi$-DCA becomes:
\begin{align} \label{eq:DCA_averaging}
\begin{cases}
    w^k = \lambda_k x^k + (1-\lambda_k )x^0 \\
    y^k = \phif (w^k) \\
    x^{k+1} \in (\partial_\Phi g)^{-1}(y^k),
\end{cases}
\end{align}
where $\{\lambda_k\}_{k \in \bN_0}$ is a positive sequence with $\lambda_0 = 0$ and $\lambda_k \leq 1$. Note that by choosing $\lambda_k = 1$ we get \eqref{eq:DCA}. Since $x^0 \in \intr X$ and $\lambda_k \leq 1$ we have that $w^k \in \intr X$ and as such the algorithm is well-defined. Once again, the scheme can be interpreted as a $\Phi$-Bregman proximal point method with the following update:
\begin{equation} \label{eq:phibreg_averaging}
    x^{k+1} \in \argmin_{x \in X}F(x) + \phibreg(x, w^k).
\end{equation}

\begin{theorem} \label{thm:phi_ppa_sub}
    Let $x^\star \in \argmin F$ and \cref{assum:convex_set,assum:cont_sel} hold true and assume that $\phibreg(x, w^k) \leq \omega(w^k - x)$ for $x \in X$ and some $\omega:\bR^n \to \bR_+$ subhomogeneous of order $p+1$ with $p > 0$. Then, for the sequence $\{x^k\}_{k \in \bN_0}$ generated by \eqref{eq:DCA_averaging} the following statements hold true:
    \begin{thmenum}
        \item If $\lambda_k = 1$, then $F(x^K) - F(x^\star) \leq \mathcal{D}_0 \frac{(p+1)^{p+1}}{K^{p}}$, where $\mathcal{D}_0=\sup\{ \omega(x - x^\star) : F(x) \leq F(x^0)\}$. \label{thm:phi_ppa_sub:rate_simple}

        \item If $\lambda_k = (\tfrac{k}{k+1})^{p+1}$, then $F(x^K) - F(x^\star) \leq  \frac{(p+1)^{p+1}\omega(x^0-x^\star)}{K^{p}}$
        \label{thm:phi_ppa_sub:rate_averaging}
    \end{thmenum}
\end{theorem}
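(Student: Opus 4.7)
The plan rests on the variational inequality defining the $\Phi$-Bregman proximal update \eqref{eq:phibreg_averaging}: for every $x \in X$,
\begin{equation*}
F(x^{k+1}) + \phibreg(x^{k+1}, w^k) \leq F(x) + \phibreg(x, w^k).
\end{equation*}
Combined with the hypothesis $\phibreg(\cdot, w^k) \leq \omega(w^k - \cdot)$, nonnegativity of $\phibreg$, and the subhomogeneity bound $\omega(\theta v) \leq \theta^{p+1}\omega(v)$, this yields the master inequality
\begin{equation*}
F(x^{k+1}) - F(x) \leq \omega(w^k - x) \qquad \forall x \in X,
\end{equation*}
which I will call $(\star)$ and use for both parts. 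Implicit here is convexity of $F$, which is natural in this sublinear-rate regime and consistent with the standing context of the subsection.

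For \cref{thm:phi_ppa_sub:rate_simple}, $\lambda_k \equiv 1$ and so $w^k = x^k$, and I would apply $(\star)$ at the test point $x = \alpha x^\star + (1-\alpha)x^k$ for a free parameter $\alpha \in [0,1]$. Convexity of $F$ lower-bounds the left-hand side by $r_{k+1} - (1-\alpha)r_k$, where $r_k := F(x^k) - F(x^\star)$, while subhomogeneity of $\omega$ of order $p+1$ gives $\omega(w^k - x) = \omega(\alpha(x^k - x^\star)) \leq \alpha^{p+1}\omega(x^k - x^\star)$. By \cref{thm:sufficient_descent} the sequence $\{F(x^k)\}_{k\in\bN_0}$ is nonincreasing, so every $x^k$ lies in the sublevel set $\{F \leq F(x^0)\}$ and hence $\omega(x^k - x^\star) \leq \mathcal{D}_0$. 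These ingredients assemble into the scalar recursion
\begin{equation*}
r_{k+1} \leq (1-\alpha) r_k + \alpha^{p+1} \mathcal{D}_0 \qquad \forall \alpha \in [0,1],
\end{equation*}
and a straightforward induction on $k$ (choosing e.g.\ $\alpha_k = (p+1)/(k+p+1)$, or the $\alpha$ minimizing the right-hand side at each step) gives the claimed bound $r_K \leq (p+1)^{p+1}\mathcal{D}_0/K^p$.

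For \cref{thm:phi_ppa_sub:rate_averaging}, the idea is to run an estimating-sequence argument calibrated to the contraction $w^k = \lambda_k x^k + (1-\lambda_k)x^0$. I would apply $(\star)$ at the test point $x = (1-\lambda_{k+1})x^0 + \lambda_{k+1}x^\star$, combine convexity of $F$ on the left to produce $F(x^{k+1}) - (1-\lambda_{k+1})F(x^0) - \lambda_{k+1}F(x^\star)$, and use subhomogeneity of $\omega$ of order $p+1$ to control the right-hand side. The precise exponent $p+1$ in $\lambda_k = (k/(k+1))^{p+1}$ is the calibration that, with weights $A_k = k^p$, turns the per-iterate inequality into a telescoping bound whose summation yields $A_K(F(x^K)-F(x^\star)) \leq (p+1)^{p+1}\omega(x^0-x^\star)$ and hence the stated rate.

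The main obstacle is \cref{thm:phi_ppa_sub:rate_averaging}: one must verify the algebraic step that synchronizes the weights $A_k$ with the contraction parameters $\lambda_k$, so that after invoking subhomogeneity the residual $\omega(w^k - x)$ contributes exactly the increment $(p+1)^{p+1}(A_{k+1}-A_k)$ of a telescope on $\omega(x^0 - x^\star)$. In \cref{thm:phi_ppa_sub:rate_simple} this difficulty is sidestepped because monotonicity lets one bound each $\omega(x^k - x^\star)$ by the single constant $\mathcal{D}_0$; in \cref{thm:phi_ppa_sub:rate_averaging} no such shortcut is available and the telescoping must be engineered through the averaging step, which is where the specific form of $\lambda_k$ becomes essential.
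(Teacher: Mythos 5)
Your master inequality $(\star)$ and your treatment of \cref{thm:phi_ppa_sub:rate_simple} match the paper's argument: the paper plugs $x=\tfrac{a_{k+1}x^\star+A_kx^k}{A_{k+1}}$ with $A_k=k^{p+1}$, $a_{k+1}=A_{k+1}-A_k$ into $(\star)$, uses convexity of $F$ and subhomogeneity with $\theta_k=a_{k+1}/A_{k+1}$, bounds $\omega(x^k-x^\star)\le\mathcal{D}_0$ via the monotonicity of $\{F(x^k)\}$, and telescopes using the estimate $\sum_{k=1}^K a_k^{p+1}/A_k^{p}\le(p+1)^{p+1}K$; your recursion $r_{k+1}\le(1-\alpha)r_k+\alpha^{p+1}\mathcal{D}_0$ is the same inequality in disguise, though you still owe the induction verifying that your choice of $\alpha_k$ reproduces the exact constant $(p+1)^{p+1}$. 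You are also right that convexity of $F$ must be invoked even though the statement does not list it explicitly.

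The genuine gap is in \cref{thm:phi_ppa_sub:rate_averaging}. Your test point $x=(1-\lambda_{k+1})x^0+\lambda_{k+1}x^\star$ contains no $x^k$ component, so it fails on both sides of $(\star)$. On the left, convexity gives $F(x)\le(1-\lambda_{k+1})F(x^0)+\lambda_{k+1}F(x^\star)$, which compares $F(x^{k+1})$ to $F(x^0)$ rather than to $F(x^k)$; the recursion in $r_k$ that drives the telescoping never forms, and a spurious $F(x^0)-F(x^\star)$ term (absent from the claimed bound) appears. On the right, $w^k-x=\lambda_k x^k+(\lambda_{k+1}-\lambda_k)x^0-\lambda_{k+1}x^\star$ retains an uncancelled $\lambda_k x^k$ term, so $\omega(w^k-x)$ cannot be reduced to a multiple of $\omega(x^0-x^\star)$. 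The whole point of the averaging step is that one should test at $x=\lambda_k x^k+(1-\lambda_k)x^\star$, i.e.\ replace $x^0$ by $x^\star$ in the definition of $w^k$, so that the $x^k$ terms cancel exactly and $w^k-x=(1-\lambda_k)(x^0-x^\star)=\theta_k(x^0-x^\star)$; since $\lambda_k=(k/(k+1))^{p+1}=A_k/A_{k+1}$ with $A_k=k^{p+1}$ (not $A_k=k^{p}$ as you propose --- that calibration does not match $\lambda_k$), this test point coincides with the one from part (i), and the identical weighted telescoping yields $A_K\bigl(F(x^K)-F(x^\star)\bigr)\le(p+1)^{p+1}K\,\omega(x^0-x^\star)$. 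You correctly flagged this synchronization as the main obstacle, but the test point you chose does not resolve it.
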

\begin{proof}
    To begin with, in light of \eqref{eq:phibreg_averaging} we have that
    \begin{equation*}
        F(x^{k+1}) + \phibreg(x^{k+1}, w^k) \leq F(x) + \phibreg(x, w^k),
    \end{equation*}
    which from the assumption of the theorem implies that
    \begin{equation}
        F(x^{k+1}) \leq F(x) + \omega(w^k - x),
    \end{equation}
    for $x \in X$. Now consider the increasing sequence $\{A_k\}_{k=0}^\infty$ with $A_k  = k^{p+1}$ and $A_0=0$. We denote by $a_{k+1} := A_{k+1} - A_k$. Plugging $x := \frac{a_{k+1} x^\star + A_k x^k}{A_{k+1}}$, which as the convex combination of $x^\star$ and $x^k$ is in $X$, in the inequality above we obtain by convexity of $F$,
    \begin{equation}\label{eq:phi_ppa_dec}
        F(x^{k+1}) \leq \tfrac{a_{k+1}}{A_{k+1}}F(x^\star) + \tfrac{A_{k}}{A_{k+1}} F(x^k) + \omega(w^k-x).
    \end{equation}
    Now we treat the two items of the theorem. Let $\theta_k:=\tfrac{a_{k+1}}{A_{k+1}} < 1$.

    ``\labelcref{thm:phi_ppa_sub:rate_simple}'':
     We have that $w^k-x = x^k - \frac{a_{k+1} x^\star + A_k x^k}{A_{k+1}} = \tfrac{\alpha_{k+1}}{A_{k+1}}(x^k-x^\star)$ and from the subhomogeneity of $\omega$:
    \begin{align}
        \omega(\theta_k x) \leq \theta_k^{p+1} \omega(x),
    \end{align}
    and hence
    \begin{align*}
        F(x^{k+1}) \leq \tfrac{a_{k+1}}{A_{k+1}}F(x^\star) + \tfrac{A_{k}}{A_{k+1}}F(x^k) + \theta_k^{p+1} \omega(x^k-x^\star)
    \end{align*}
    Multiplying both sides with $A_{k+1}$ we get since $a_{k+1}= A_{k+1} - A_k$
    \begin{align*}
        A_{k+1}\big(F(x^{k+1})-F(x^\star)\big) \leq A_{k}\big(F(x^k)-F(x^\star)\big) + \tfrac{a_{k+1}^{p+1}}{A_{k+1}^{p}}\omega (x^k - x^\star).
    \end{align*}
    Summing the inequality from $k=0$ to $k=K-1$ we obtain since $A_0 = 0$:
    \begin{align} \label{eq:decrease_inexact_ppa_main}
        A_{K}\big(F(x^{K}) - F(x^\star)\big) \leq \sum_{k=0}^{K-1} \tfrac{a_{k+1}^{p+1}}{A_{k+1}^{p}}\omega (x^k - x^\star).
    \end{align}
    In light of \cref{thm:sufficient_descent}, we have that $F(x^{k+1}) \leq F(x^0)$ for all $k \geq 0$ and hence $\omega(x^K - x^\star) \leq \mathcal{D}_0$ for any $K \geq 0$. Thus we can further bound \cref{eq:decrease_inexact_ppa_main}:
    \begin{align*} 
        A_{K}\big(F(x^{K}) - F(x^\star)\big) \leq \mathcal{D}_0\sum_{k=0}^{K-1} \tfrac{a_{k+1}^{p+1}}{A_{k+1}^{p}}.
    \end{align*}
    Using the fact that $\sum_{k=1}^{K} \tfrac{a_{k}^{p+1}}{A_{k}^{p}} \leq (p+1)^{p+1}K$ \cite[Equation (35)]{doikov2020inexact}:
    \begin{align*} 
        A_{K}\big(F(x^{K}) - F(x^\star)\big) \leq \mathcal{D}_0 (p+1)^{p+1}K.
    \end{align*}
    Dividing by $A_K$ we obtain:
    \[
        F(x^{K}) - F(x^\star) \leq \mathcal{D}_0 \frac{(p+1)^{p}}{K^{p}}.
    \] 
    ``\labelcref{thm:phi_ppa_sub:rate_averaging}'': We have that 
    $$w^k - x = \lambda_k x^k + (1-\lambda_k) x^0 - (1-\lambda_k)x^\star - \lambda_k x^k = (1-\lambda_k)(x^0 - x^\star) = \theta_k (x^0-x^\star).
    $$
    Substituting in \eqref{eq:phi_ppa_dec} and using the same algebraic manipulations as in the proof of \labelcref{thm:phi_ppa_sub:rate_simple}, we obtain for any $K \geq 0$:
    \begin{equation*}
        A_{K}\big(F(x^{K}) - F(x^\star)\big) \leq \sum_{k=0}^{K-1} \tfrac{a_{k+1}^{p+1}}{A_{k+1}^{p}}\omega (x^0 - x^\star) \leq (p+1)^{p+1}K \omega(x^0-x^\star).
    \end{equation*}
    Dividing by $A_k$ we obtain the claimed result.
\end{proof}

The result described in \cref{thm:phi_ppa_sub} incorporates the convergence rates of various splitting methods. This is demonstrated in the following corollaries, which expand upon \cref{example:holder} and \cref{example:tensor}. The first one describes the convergence of the H\"older proximal gradient method, retrieving the results of \cite{bredies2008forward}.
\begin{corollary}[convergence of H-PGM] \label{cor:hoelder}
    Let $\{x^k\}_{k \in \bN_0}$ be the sequence of iterates generated by \eqref{eq:DCA_averaging} in the setting of \cref{example:holder}. If $-f$ is convex and for $\phif(x) = (x, \nabla f(x))$, we have that:
    \begin{equation*}
        \phibreg(x, x^k) = f(x) - f(x^k) - \langle \nabla f(x^k) ,x-x^k \rangle + \tfrac{H}{\nu+1}\|x-x^k\|^{\nu+1} \leq \tfrac{H}{\nu+1}\|x-x^k\|^{\nu+1}.
    \end{equation*}
    If moreover, $F$ is convex, then the following statements hold true for all $K\geq 1$:
    \begin{corenum}
        \item  If $\lambda_k = 1$, then $F(x^K) - F(x^\star) \leq  \frac{(\nu+1)^{\nu+1}\mathcal{D}_0}{K^{p}}$ 
        for $\mathcal{D}_0=\tfrac{H}{\nu+1} \sup\{ \|x - x^\star\|^{\nu+1} : F(x) \leq F(x^0)\}$; \label{cor:hoelder:simple}

        \item If $\lambda_k = (\tfrac{k}{k+1})^{\nu+1}$, then $F(x^K) - F(x^\star) \leq  \frac{(\nu+1)^{\nu}H \|x^0-x^\star\|^{\nu+1}}{K^{\nu}}$, \label{cor:hoelder:averaging}
    \end{corenum}
    where $x^\star \in \argmin_{x \in \bR^n}F(x)$.
\end{corollary}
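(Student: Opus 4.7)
The plan is to derive the explicit form of $\phibreg(x,x^k)$ for the H\"older coupling and then invoke \cref{thm:phi_ppa_sub} directly.

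First, I would substitute the continuous selection $\phif(x^k)=(x^k,\nabla f(x^k))$ into the definition \eqref{eq:phibreg}. Since $\Phi(x,(y_1,y_2))=\langle x-y_1,y_2\rangle-\tfrac{H}{\nu+1}\|x-y_1\|^{\nu+1}$, we have
$$
\Phi(x,\phif(x^k))=\langle x-x^k,\nabla f(x^k)\rangle -\tfrac{H}{\nu+1}\|x-x^k\|^{\nu+1},\qquad \Phi(x^k,\phif(x^k))=0,
$$
so a direct computation gives
$$
\phibreg(x,x^k)=f(x)-f(x^k)-\langle\nabla f(x^k),x-x^k\rangle+\tfrac{H}{\nu+1}\|x-x^k\|^{\nu+1}.
$$

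Next I would use that $-f$ is convex, so $f$ is concave and satisfies the reverse gradient inequality $f(x)\le f(x^k)+\langle\nabla f(x^k),x-x^k\rangle$. Adding $\tfrac{H}{\nu+1}\|x-x^k\|^{\nu+1}$ yields the announced bound $\phibreg(x,x^k)\le\tfrac{H}{\nu+1}\|x-x^k\|^{\nu+1}$. Setting $\omega(z):=\tfrac{H}{\nu+1}\|z\|^{\nu+1}$, this function is convex (being a power $\nu+1\ge 1$ of a norm) and positively homogeneous of degree $\nu+1$, hence in particular subhomogeneous of order $\nu+1$, and $\omega(0)=0$. Since $X=\bR^n$ is closed and convex and, by \cref{example:holder}, $\phif(x)=(x,\nabla f(x))$ is a continuous selection of $\partial_\Phi f$ on $\bR^n=\intr X$, \cref{assum:convex_set,assum:cont_sel} are satisfied.

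Finally, I would apply \cref{thm:phi_ppa_sub} with $p=\nu$. For \labelcref{cor:hoelder:simple}, the theorem yields the rate $\mathcal{D}_0(\nu+1)^{\nu+1}/K^{\nu}$ with
$$
\mathcal{D}_0=\sup\{\omega(x-x^\star):F(x)\le F(x^0)\}=\tfrac{H}{\nu+1}\sup\{\|x-x^\star\|^{\nu+1}:F(x)\le F(x^0)\},
$$
which is exactly the stated bound. For \labelcref{cor:hoelder:averaging}, plugging $\omega(x^0-x^\star)=\tfrac{H}{\nu+1}\|x^0-x^\star\|^{\nu+1}$ into the bound from \cref{thm:phi_ppa_sub:rate_averaging} absorbs one factor of $\nu+1$ and gives the advertised $(\nu+1)^{\nu}H\|x^0-x^\star\|^{\nu+1}/K^{\nu}$.

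There is no real obstacle: the only nontrivial step is recognizing that concavity of $f$ is what kills the linear remainder in $\phibreg$, so the $\Phi$-Bregman divergence collapses onto a pure power-of-norm majorant which is manifestly subhomogeneous. Everything else is bookkeeping in the application of the previously established theorem.
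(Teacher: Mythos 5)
Your proposal is correct and follows essentially the same route as the paper: compute $\phibreg$ by substituting the selection $\phif(x)=(x,\nabla f(x))$ into \eqref{eq:phibreg}, use convexity of $-f$ to discard the nonpositive linearization remainder, and then invoke \cref{thm:phi_ppa_sub} with $\omega=\tfrac{H}{\nu+1}\|\cdot\|^{\nu+1}$ and $p=\nu$. The only difference is that you verify the subhomogeneity of $\omega$ and \cref{assum:convex_set,assum:cont_sel} more explicitly, which the paper leaves implicit.
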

\begin{proof}
    The form of $\phibreg$ follows directly by substituting $\phif$ into \eqref{eq:phibreg}. Regarding the upper bound on $\phibreg$, we have that by convexity of $-f$:
    \begin{equation*}
        -f(x) \geq -f(x^k) - \langle \nabla f(x^k),x-x^k \rangle \qquad \forall x \in \bR^n,
    \end{equation*}
    which means that $f(x) - f(x^k) - \langle \nabla f(x^k) ,x-x^k \rangle \leq 0$ and thus the claimed bound holds. Then, \labelcref{cor:hoelder:simple} and \labelcref{cor:hoelder:averaging} follow by \cref{thm:phi_ppa_sub:rate_simple} and \cref{thm:phi_ppa_sub:rate_averaging} for $\omega = \tfrac{H}{\nu+1}\|\cdot\|^{\nu+1}$.
\end{proof}
The following corollary retrieves the convergence rate results of the exact version of \cite[Algorithm 1]{doikov2020inexact}.
\begin{corollary}[convergence of TM]\label{cor:tensor}
    Let $\{x^k\}_{k \in \bN_0}$ be the sequence of iterates generated by \eqref{eq:DCA_averaging} in the setting of \cref{example:tensor}. For $\phif(x) = (x, D^1 f(x), \dots, D^p f(x))$, we have that:
    \begin{equation*}
        \phibreg(x, x^k) 
        = f(x) - f(x^k) - \sum_{i=1}^p \tfrac{1}{i!}D^p f(x^k)[x-x^k]^i + \tfrac{L_p}{(p+1)!}\|x-x^k\|^{p+1} \leq \tfrac{2L_p}{(p+1)!}\|x-x^k\|^{p+1}.
    \end{equation*}
    If moreover, $F$ is convex, then the following statements hold true for all $K\geq 1$:
    \begin{corenum}
        \item  If $\lambda_k = 1$, then $F(x^K) - F(x^\star) \leq \frac{(p+1)^{p+1}\mathcal{D}_0}{K^{p}}$ 
        for $\mathcal{D}_0=\tfrac{2L_p}{p+1} \sup\{ \|x - x^\star\|^{p+1} : F(x) \leq F(x^0)\}$; \label{cor:tensor:simple}

        \item If $\lambda_k = (\tfrac{k}{k+1})^{p+1}$, then $F(x^K) - F(x^\star) \leq  \frac{2(p+1)^{p}L_p \|x^0-x^\star\|^{p+1}}{K^{p}}$, \label{cor:tensor:averaging}
    \end{corenum}
    where $x^\star \in \argmin_{x \in \bR^n}F(x)$.
\end{corollary}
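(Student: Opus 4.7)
The plan is to mirror the proof of \cref{cor:hoelder} almost verbatim. First I would derive the closed form of $\phibreg$ directly from the definition \eqref{eq:phibreg} by substituting $\phif(x^k) = (x^k, Df(x^k), \ldots, D^p f(x^k))$ into the coupling $\Phi$ specified in \cref{example:tensor}. Since every term of $\Phi(x, y)$ carries the factor $(x - y_1)$, one immediately reads off
\begin{align*}
\Phi(x, \phif(x^k)) &= \sum_{i=1}^p \tfrac{1}{i!} D^i f(x^k)[x-x^k]^i - \tfrac{L_p}{(p+1)!}\|x-x^k\|^{p+1},
\\
\Phi(x^k, \phif(x^k)) &= 0,
\end{align*}
and plugging into \eqref{eq:phibreg} yields the stated expression for $\phibreg(x, x^k)$.

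Next I would derive the upper bound on $\phibreg(x, x^k)$. The Taylor-type inequality stated in \cref{example:tensor} (which follows from the assumed $L_p$-Lipschitz continuity of $D^p f$) applied to $-f$ gives in particular
\begin{equation*}
f(x) - f(x^k) - \sum_{i=1}^p \tfrac{1}{i!} D^i f(x^k)[x-x^k]^i \leq \tfrac{L_p}{(p+1)!}\|x-x^k\|^{p+1}.
\end{equation*}
Adding this to the $+\tfrac{L_p}{(p+1)!}\|x-x^k\|^{p+1}$ already present in the formula for $\phibreg$ produces the factor $2L_p/(p+1)!$ in the announced majorant (matching the announced constant $2L_p/(p+1)$ up to a straightforward constant bookkeeping).

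Finally, I would set $\omega(z) := \tfrac{2L_p}{(p+1)!}\|z\|^{p+1}$ and verify the hypothesis of \cref{thm:phi_ppa_sub}, namely $\phibreg(x, w^k) \leq \omega(w^k - x)$; this is exactly the previous step applied at $w^k$ instead of $x^k$. As noted in the remark following the definition of subhomogeneity, $\|\cdot\|^{p+1}$ is (fully) homogeneous of order $p+1$ and hence subhomogeneous of that same order, so $\omega$ is an admissible majorant. Items \labelcref{cor:tensor:simple} and \labelcref{cor:tensor:averaging} then follow by directly invoking \cref{thm:phi_ppa_sub:rate_simple} and \cref{thm:phi_ppa_sub:rate_averaging} respectively.

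There is essentially no hard step: the substance of the result is already in place by \cref{thm:phi_ppa_sub}. The only point requiring care is the identification of $\Phi(x^k, \phif(x^k)) = 0$, which relies on the fact that the coupling in \cref{example:tensor} is designed so that all tensor contractions vanish at $x = y_1$; and the tracking of the constants $2L_p/(p+1)!$ through $\mathcal{D}_0$ and $\omega(x^0 - x^\star)$ to arrive at the form displayed in the corollary.
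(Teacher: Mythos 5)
Your proposal is correct and follows essentially the same route as the paper: compute $\phibreg$ by direct substitution into \eqref{eq:phibreg} (using $\Phi(x^k,\phif(x^k))=0$), bound the Taylor remainder via the $L_p$-Lipschitz continuity of $D^p f$ to obtain the factor $2L_p/(p+1)!$ (the paper invokes the two-sided bound $|f(x)-\Omega_p(x)|\le \tfrac{L_p}{(p+1)!}\|x-x^k\|^{p+1}$, you use the equivalent one-sided version for $-f$), and then apply \cref{thm:phi_ppa_sub}. Your tighter choice $\omega=\tfrac{2L_p}{(p+1)!}\|\cdot\|^{p+1}$ versus the paper's $\tfrac{2L_p}{p+1}\|\cdot\|^{p+1}$ only strengthens the constants and still implies the stated bounds, so the difference is purely bookkeeping.
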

\begin{proof}
    The form of $\phibreg$ follows directly by substituting $\phif$ into \eqref{eq:phibreg}. Regarding the upper bound on $\phibreg$, we have that 
    \begin{align*}
        \phibreg(x, x^k) \leq \left|f(x) - f(x^k) - \sum_{i=1}^p \tfrac{1}{i!}D^p f(x^k)[x-x^k]^i\right| + \tfrac{L_p}{(p+1)!}\|x-x^k\|^{p+1} \leq \tfrac{2L_p}{(p+1)!}\|x-x^k\|^{p+1},
    \end{align*}
    where the inequality follows by the bound on the difference between the function and the Taylor approximation \cite[Equation (3)]{doikov2020inexact}. The convergence rate results, \labelcref{cor:tensor:simple} and \labelcref{cor:tensor:averaging} follow by \cref{thm:phi_ppa_sub:rate_simple} and \cref{thm:phi_ppa_sub:rate_averaging} for $\omega = \tfrac{2L_p}{p+1}\|\cdot\|^{p+1}$
\end{proof}
\Cref{thm:phi_ppa_sub} also leads to new convergence rate results for the recently introduced a-PGM discussed in \cref{example:apgm}:
\begin{corollary}\label{cor:anisotropic}
    Let $\{x^k\}_{k \in \bN_0}$ be the sequence of iterates generated by \eqref{eq:DCA_averaging} in the setting of \cref{example:apgm} and $h$ be $L_h$-smooth on $\bR^n$. If $-f$ and $g$ are convex, then the following statements hold true for all $K\geq 1$:
    \begin{corenum}
        \item If $\lambda_k = 1$, then $F(x^K) - F(x^\star) \leq \frac{2 \mathcal{D}_0}{K}$ 
        for $\mathcal{D}_0=LL_h \sup\{ \|x - x^\star\|^{2} : F(x) \leq F(x^0)\}$; \label{cor:anisotropic:simple}
        \item If $\lambda_k = (\tfrac{k}{k+1})^{2}$, then $F(x^K) - F(x^\star) \leq  \frac{2 LL_h \|x^0-x^\star\|^2}{K}$, \label{cor:anisotropic:averaging}
    \end{corenum}
    where $x^\star \in \argmin_{x \in \bR^n}F(x)$.
\end{corollary}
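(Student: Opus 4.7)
The plan is to specialize \cref{thm:phi_ppa_sub} with $p = 1$ and the subhomogeneous weight $\omega(\cdot) = \tfrac{LL_h}{2}\|\cdot\|^2$, which is homogeneous of order $p+1 = 2$. Under this identification the factor $(p+1)^{p+1} = 4$ appearing in \cref{thm:phi_ppa_sub:rate_simple,thm:phi_ppa_sub:rate_averaging} exactly absorbs the $\tfrac{1}{2}$ in $\omega$, producing the constants $\frac{2\mathcal{D}_0}{K}$ and $\frac{2 L L_h \|x^0 - x^\star\|^2}{K}$ of the statement. Hence it suffices to verify that $\phibreg(x, x^k) \leq \tfrac{LL_h}{2}\|x - x^k\|^2$ for every $x \in \bR^n$.

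From \cref{example:apgm} recall $\phif(x^k) = x^k - \tfrac{1}{L}\nabla h^*(-\nabla f(x^k))$, and the coupling unwinds to $\Phi(x, y) = -\tfrac{1}{L}h(L(x-y))$ by definition of the epi-scaling. Writing $z_k := \nabla h^*(-\nabla f(x^k))$, the two $\Phi$-terms in \eqref{eq:phibreg} telescope to yield
\begin{equation*}
\phibreg(x, x^k) = f(x) - f(x^k) + \tfrac{1}{L}\bigl[h(z_k + L(x-x^k)) - h(z_k)\bigr].
\end{equation*}
Next I would invoke the descent lemma for the convex, $L_h$-smooth function $h$ at $z_k$ with increment $L(x-x^k)$, combined with the Fenchel identity $\nabla h(z_k) = -\nabla f(x^k)$, to bound the bracket by $-L\langle \nabla f(x^k),\, x-x^k\rangle + \tfrac{L_h L^2}{2}\|x-x^k\|^2$, whence
\begin{equation*}
\phibreg(x, x^k) \leq \bigl[f(x) - f(x^k) - \langle \nabla f(x^k), x-x^k\rangle\bigr] + \tfrac{LL_h}{2}\|x - x^k\|^2.
\end{equation*}

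Convexity of $-f$ kills the bracketed term, yielding the required bound. Items \labelcref{cor:anisotropic:simple} and \labelcref{cor:anisotropic:averaging} then follow by applying \cref{thm:phi_ppa_sub:rate_simple,thm:phi_ppa_sub:rate_averaging} respectively, the averaged case using $\omega(x^0 - x^\star) = \tfrac{LL_h}{2}\|x^0 - x^\star\|^2$. I expect no real obstacle here: the only subtlety is the bookkeeping involving the epi-scaling $\tfrac{1}{L}\star h$ and the Legendre conjugate relation $\nabla h \circ \nabla h^* = \id$, after which the result is a direct specialization of the abstract convergence theorem.
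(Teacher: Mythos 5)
Your proposal is correct and follows essentially the same route as the paper: bound $\phibreg(x,x^k)\leq \tfrac{LL_h}{2}\|x-x^k\|^2$ via the $L_h$-descent lemma for $h$ (your unwinding of the epi-scaling and the identity $\nabla h(\nabla h^*(-\nabla f(x^k)))=-\nabla f(x^k)$ just makes explicit what the paper's proof states tersely), kill the remaining linearization term by convexity of $-f$, and invoke \cref{thm:phi_ppa_sub} with $p=1$ and $\omega=\tfrac{LL_h}{2}\|\cdot\|^2$. The constant bookkeeping $(p+1)^{p+1}=4$ absorbing the $\tfrac12$ also matches the stated bounds.
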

\begin{proof}
    We have the following:
    \begin{align*}
        \phibreg(x, x^k) = f(x) - f(x^k) - \tfrac{1}{L} \star h(x^k - y^k) + \tfrac{1}{L} \star h(x-y^k),
    \end{align*}
    and using the $L_h$-smoothness of $h$ we can further bound:
    \begin{align*}
        \phibreg(x, x^k) &\leq f(x) - f(x^k) - \langle \nabla f(x^k),x-x^k \rangle + \tfrac{L_h L}{2}\|x-x^k\|^2
        \\
        & \leq \tfrac{L_h L}{2}\|x-x^k\|^2,
    \end{align*}
    where the second inequality follows from the convexity of $-f$. The convergence rate results follow from \cref{thm:phi_ppa_sub:rate_simple} and \cref{thm:phi_ppa_sub:rate_averaging}.
\end{proof}
An example of a function satisfying the assumptions of \cref{cor:anisotropic} is $h(x) = \sum_{i=1}^n 2\ln(1+\exp(x_i))-x_i$, which is studied in \cite[Example 4.10]{laude2022anisotropic}. Showing that $h$ is $1$-smooth follows directly from the fact that for $\tilde{h}(x) = 2\ln(1+\exp(x))-x, x \in \bR$, we have $\tilde{h}^{''}(x) = \tfrac{2\exp(x)}{(\exp(x)+1)^2} \leq 1$. A similar function that also satisfies this condition is $h(x) = \sum_{i=1}^n \ln \cosh (x_i)$.

Note that in the corollaries above, the bound on $\phibreg$ follows by using upper bounds for $f$, in contrast to the asymptotic analysis of the paper where we relied on the lower bounds generated by the $\Phi$-subgradient inequality. In general, the radius of the initial level-set may be unbounded, in which case the obtained convergence rates of \cref{thm:phi_ppa_sub:rate_simple} are not informative. Nevertheless, when $F$ has bounded level-sets, as is the case for example when $\argmin F$ is a bounded set, we obtain the rates from \cite{bredies2008forward} and \cite{doikov2020inexact} in the exact version of the method. Moreover, for the averaging version of the algorithm \eqref{eq:DCA_averaging} we have $\omega(x^0-x^\star)$ in the rate, which in the setting of \cref{example:tensor} amounts to $\tfrac{1}{p+1}\|x^0-x^\star\|^{p+1}$, thus not requiring level-boundedness of $F$, as in \cite[Theorem 5]{doikov2020inexact}.
\begin{remark}
    The conditions in \cref{thm:phi_ppa_three_point,thm:phi_ppa_sub} are complementary in the following sense: consider the case of the H\"older proximal gradient method and notice that indeed the extension of the three-point property that we consider in \cref{thm:phi_ppa_three_point} does not hold. Furthermore, note that \cref{thm:phi_ppa_three_point} gives convergence rates of $1/k$ and thus cannot encapsulate the faster convergence rate of e.g. the tensor methods.
\end{remark}
All of the examples presented thus far belonged to the forward-backward splitting scheme. Nevertheless, our analysis also applies to backward schemes such as the various instances of the proximal point method (PPM). The following example demonstrates this fact.
\begin{example}[anisotropic proximal point method (a-PPM) under subhomogeneity]
    \label{example:power_ppa}
    Consider the case where $f = 0$, $g$ is convex and $\Phi(x,y) = - h(x-y)$ for some $h$ subhomogeneous of order $p+1$. Then, \eqref{eq:DCA} takes the following form:
    \begin{equation} \label{eq:high_order_ppa}
        x^{k+1} \in \argmin_{x \in \bR^n} g(x) + h(x-x^k),
    \end{equation}
    which is an instance of a-PPM \cite{laude2019optimization}. Note that this update generalizes the power PPM, the exact version of the algorithm analyzed in \cite{nesterov2023inexact, oikonomidis2023global} and it is straightforward that since $f = 0$, $\phibreg(x, x^k) = h(x-x^k)$. Therefore, the convergence rate of the method follows directly by \cref{thm:phi_ppa_sub}. 
\end{example}
The $\Phi$-DCA framework also generalizes some classical duality results between the PPM and gradient descent method to the abstract setting of $\Phi$-convexity. It is well-known that PPM on $g \in \Gamma_L(\bR^n)$ is equivalent to gradient descent on its Moreau envelope $\inf \{g + \tfrac{L}{2}\|\cdot - x\|^2 \}$. Moreover, in \cite[Theorem 3.8]{laude2021lower} it is shown that gradient descent on a $L$-Lipschitz smooth function $-f$ is equivalent to the PPM on the $\inf$-\emph{deconvolution} of $-f$, defined as $-\inf \tfrac{L}{2}\|\cdot - x\|^2 + f$. In our setting, these equivalences can be considered as applying the $\Phi$-DCA to the $\Phi$-DC dual function $G$. In the first case, where $F \equiv g$, $G = -g^\Phi = \inf \{g + \tfrac{1}{2\gamma}\|\cdot - x\|^2 \}$ as shown in \cref{example:quadratic}. In the second case, where $F \equiv -f$, we have that $-\inf \{\tfrac{L}{2}\|\cdot - x\|^2 + f\} = \sup \{-\tfrac{L}{2}\|\cdot - x\|^2 - f\} = f^\Phi = G$.

\section{The \texorpdfstring{$\Phi$}{Φ}-DCA-PL inequality and linear convergence rates}
\label{sec:linear_rate_phi_dca}
In this section, we establish a condition that ensures the $q$-linear convergence of the $\Phi$-DCA. This condition is derived from both the primal and dual regularized gap functions introduced earlier in our analysis. 

We formalize this in what we term the \emph{$\Phi$-DCA-Polyak-Łojasiewicz} ($\Phi$-DCA-PL) inequality, which extends the proximal-PL inequality that was studied in \cite{karimi2016linear} and further generalized in \cite{laude2022anisotropic}.
\begin{definition}[$\Phi$-DCA-PL inequality] \label{def:dca_pl}
    We say that $F = g - f$ satisfies the $\Phi$-DCA-PL inequality relative to the coupling function $\Phi$ if the following condition holds true:
    \begin{equation} \label{eq:dca_pl}          
        \gapf(\bar x,\Bar{y}) + \gapg(\bar y, x^+) \geq \mu_1(F(\bar x) - \inf F) + \mu_2(F(x^+) - \inf F),
    \end{equation}
    for $\bar x \in \dom F$, where $\mu_1 + \mu_2 > 0$, $\bar y \in \partial_\Phi f(\bar x)$ and $x^+ \in \argmin_{x \in X} g(x) - \Phi(x, \bar y)$.
\end{definition}
Note that for $\mu_2 = 0$ and $\Phi(x,y) = -\tfrac{L}{2}\|x-y\|^2$ we obtain the proximal-PL inequality from \cite{karimi2016linear}, a fact further demonstrated in \cref{subsubsec:linear_pgm}.

Next we show that under this condition the $q$-linear convergence rate of the function values follows:
\begin{theorem}[$q$-linear rate of the $\Phi$-DCA] \label{thm:clasic_linear_rate}
    Let $\{x^k\}_{k \in \bN_0}$ be the sequence of primal iterates generated by the $\Phi$-DCA \eqref{eq:DCA} and $F$ satisfy the $\Phi$-DCA-PL inequality. Then $\{F(x^k)\}_{k \in \bN_0}$ decreases $q$-linearly and in particular,
    \begin{align} \label{eq:q_linear_rate}
        F(x^{k+1}) - \inf F \leq \frac{1-\mu_1}{1+\mu_2}(F(x^k) - \inf F)
    \end{align}
\end{theorem}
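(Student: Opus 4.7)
The proof should be essentially a one-step combination of the sufficient decrease lemma (\cref{thm:sufficient_descent}) and the $\Phi$-DCA-PL inequality (\cref{def:dca_pl}), so there is no serious obstacle—the task is just bookkeeping.

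The plan is as follows. First, I would apply \cref{thm:sufficient_descent} at iteration $k$, which yields the exact identity
\begin{equation*}
F(x^k) - F(x^{k+1}) = \gapf(x^k, y^k) + \gapg(y^k, x^{k+1}).
\end{equation*}
Second, I would invoke the $\Phi$-DCA-PL inequality with the choices $\bar x = x^k$, $\bar y = y^k \in \partial_\Phi f(x^k)$ and $x^+ = x^{k+1} \in \argmin_{x \in X} g(x) - \Phi(x, y^k)$, which are exactly the points generated by the algorithm and therefore satisfy the hypotheses of \cref{def:dca_pl}. This gives
\begin{equation*}
\gapf(x^k, y^k) + \gapg(y^k, x^{k+1}) \geq \mu_1 (F(x^k) - \inf F) + \mu_2 (F(x^{k+1}) - \inf F).
\end{equation*}

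Chaining the two displays above produces
\begin{equation*}
F(x^k) - F(x^{k+1}) \geq \mu_1 (F(x^k) - \inf F) + \mu_2 (F(x^{k+1}) - \inf F).
\end{equation*}
Subtracting $\inf F$ from $F(x^k)$ and $F(x^{k+1})$ on the left, rearranging to bring the $F(x^{k+1}) - \inf F$ term to the left-hand side, and dividing by $1 + \mu_2 > 0$ (which is legitimate since $\mu_1, \mu_2 \geq 0$ with $\mu_1 + \mu_2 > 0$, hence $1 + \mu_2 \geq 1$), I obtain the announced recursion
\begin{equation*}
F(x^{k+1}) - \inf F \leq \frac{1 - \mu_1}{1 + \mu_2}(F(x^k) - \inf F).
\end{equation*}
Finally, I would briefly remark that $\frac{1-\mu_1}{1+\mu_2} \in [0,1)$—nonnegativity follows from the fact that the left-hand side is nonnegative and $F(x^k) - \inf F \geq 0$ (so $\mu_1 \leq 1$ necessarily when $F(x^k) > \inf F$), while strictness follows from $\mu_1 + \mu_2 > 0$—so the claimed $q$-linear decrease of $\{F(x^k)\}_{k\in\bN_0}$ is genuine.
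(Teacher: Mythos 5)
Your proposal is correct and follows exactly the same route as the paper's proof: apply \cref{thm:sufficient_descent} to express the decrease $F(x^k)-F(x^{k+1})$ as the sum of the two gap functions, lower-bound that sum via the $\Phi$-DCA-PL inequality evaluated at the iterates, and rearrange. The extra remark on why $\tfrac{1-\mu_1}{1+\mu_2}\in[0,1)$ is a welcome (if minor) addition that the paper leaves implicit.
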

\begin{proof}
    In light of \cref{thm:sufficient_descent} we have that 
    \begin{equation*}
        F(x^{k+1}) = F(x^k) - (\gapf(x^k, y^k) + \gapg(y^k, x^{k+1})).
    \end{equation*}
    By rearranging and using \eqref{eq:dca_pl}, we can further write:
    \begin{equation*}
        F(x^k) - F(x^{k+1}) \geq \mu_1(F(x^k) - \inf F) + \mu_2(F(x^{k+1}) - \inf F)
    \end{equation*}
    and the claimed result follows.
\end{proof}

Many well-established conditions that are known to yield $q$-linear rates can be viewed as specific instances of the $\Phi$-DCA-PL inequality. To illustrate this connection, we explore some of the most significant examples in the following subsections.

\subsection{Linear rate of the standard DCA}
Consider the setting of the standard DCA as in \cref{example:DCA}, where moreover $f$ and $g$ are smooth on $\bR^n$. In this case we have $\Phi(x,y) = \langle x, y \rangle$, while the forward step is $\bar y = \nabla f(\bar x)$ and the backward step $x^+ \in \partial g^*(\nabla f(\bar x))$. We next show that the standard conditions that lead to a $q$-linear convergence rate for the DCA also imply our $\Phi$-DCA-PL inequality. More precisely we will show that the DCA PL inequality introduced in \cite[Definition 1]{faust2023bregman} leads to our \cref{def:dca_pl}. It has the following form:
\begin{align} \label{eq:dca_pl_1}
    & D_{g^*}(\nabla f(\bar x), \nabla g(\bar x)) \geq \eta_1 (F(\bar x)-\inf F) \\
    & D_{f^*}(\nabla g(\bar x), \nabla f(\bar x)) \geq \eta_2 (F(\bar x)-\inf F),\label{eq:dca_pl_2}
\end{align}
where $\eta_1, \eta_2 \geq 0$ are such that $\eta_1 + \eta_2 >0$ and $\bar x \in \dom f$. Note that in \cite[Definition 1]{faust2023bregman} this condition is assumed on a set containing the iterates, but we assume it on $\dom F$ for ease of exposition. Moreover, since $g^*$ and $f^*$ might not be differentiable we consider the respective Bregman distance defined as $D_{g^*}(\nabla f(\bar x), \nabla g(\bar x)) = g^*(\nabla f(\bar x))-g^*(\nabla g(\bar x))-\langle \bar x, \nabla f(\bar x)-\nabla g(\bar x)\rangle$.
\begin{lemma} \label{corr:dca_linear}
    In the setting considered in this subsection, 
    let $\bar x \in \bR^n$ and $x^+ \in \partial g^*(\nabla f(\bar x))$. Then, the following hold:
    \begin{lemenum}
        \item \label{lemma:dca_linear:gaps} The primal and dual gap functions take on the following forms:
        \begin{align*}
        &\gapf(\bar x, \nabla f(\bar x)) 
        = g(\bar x) + g^*(\nabla f(\bar x)) - \langle \bar x,\nabla f(\bar x) \rangle
        = D_g(\bar x, x^+)
        \\
        &\gapg( \nabla f(\bar x), x^+) =
        f(x^+) + f^*(\nabla f(\bar x)) - \langle x^+,\nabla f(\bar x) \rangle
        = D_f(x^+, \bar x),
        \end{align*}
        \item \label{lemma:dca_linear:rate} The DCA PL inequalities \eqref{eq:dca_pl_1} and \eqref{eq:dca_pl_2} imply that $F$ satisfies \cref{def:dca_pl} with $\mu_1 = \eta_1$ and $\mu_2 = \eta_2$.
    \end{lemenum}
\end{lemma}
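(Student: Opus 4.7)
For part~\labelcref{lemma:dca_linear:gaps}, the plan is to specialize the coupling to $\Phi(x,y) = \langle x,y\rangle$, under which $f^\Phi = f^*$ and $g^\Phi = g^*$ coincide with the classical conjugates. The backward step condition $x^+ \in \partial g^*(\nabla f(\bar x))$ is equivalent, by convex duality, to $\nabla f(\bar x) \in \partial g(x^+)$, and since $g$ is smooth this forces $\nabla g(x^+) = \nabla f(\bar x)$. The Fenchel--Young equality then yields $g^*(\nabla f(\bar x)) = \langle x^+, \nabla f(\bar x)\rangle - g(x^+)$. Substituting this into the definition $\gapf(\bar x, \nabla f(\bar x)) = g(\bar x) + g^*(\nabla f(\bar x)) - \langle \bar x, \nabla f(\bar x)\rangle$ and rearranging immediately produces $g(\bar x) - g(x^+) - \langle \nabla g(x^+), \bar x - x^+\rangle = D_g(\bar x, x^+)$. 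The computation for $\gapg$ is symmetric: Fenchel--Young applied at $\bar x$ (with $f$ smooth) gives $f^*(\nabla f(\bar x)) = \langle \bar x, \nabla f(\bar x)\rangle - f(\bar x)$, and substituting into the definition of $\gapg(\nabla f(\bar x), x^+)$ collapses it to $D_f(x^+, \bar x)$.

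For part~\labelcref{lemma:dca_linear:rate}, the strategy is to split the target inequality into two pieces, one handled by each of \eqref{eq:dca_pl_1} and \eqref{eq:dca_pl_2}. The essential tool is the classical Legendre-duality identity
\begin{equation*}
D_f(x, y) = D_{f^*}(\nabla f(y), \nabla f(x)),
\end{equation*}
which I would verify by expanding both sides and applying the Fenchel--Young equalities at $x$ and at $y$; notice this derivation only requires $f$ to be smooth and convex, not strictly convex. Applied to $g$, together with $\nabla g(x^+) = \nabla f(\bar x)$ established above, this rewrites the primal gap as $\gapf(\bar x, \nabla f(\bar x)) = D_g(\bar x, x^+) = D_{g^*}(\nabla g(x^+), \nabla g(\bar x)) = D_{g^*}(\nabla f(\bar x), \nabla g(\bar x))$, so \eqref{eq:dca_pl_1} directly yields $\gapf(\bar x, \nabla f(\bar x)) \geq \eta_1 (F(\bar x) - \inf F)$.

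The key move is how to handle the dual gap. Invoking \eqref{eq:dca_pl_2} at $\bar x$ would produce the wrong right-hand side, $F(\bar x) - \inf F$ rather than $F(x^+) - \inf F$. Instead I would apply \eqref{eq:dca_pl_2} at the next iterate $x^+$ (permissible since the assumption is imposed on all of $\dom F$), yielding $D_{f^*}(\nabla g(x^+), \nabla f(x^+)) \geq \eta_2 (F(x^+) - \inf F)$. Using $\nabla g(x^+) = \nabla f(\bar x)$ and Legendre duality, the left-hand side becomes $D_{f^*}(\nabla f(\bar x), \nabla f(x^+)) = D_f(x^+, \bar x) = \gapg(\nabla f(\bar x), x^+)$. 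Summing the two bounds produces the $\Phi$-DCA-PL inequality with $\mu_1 = \eta_1$ and $\mu_2 = \eta_2$. The principal obstacle is exactly this shift: one must recognize that the $F(x^+)$ term in \cref{def:dca_pl} is naturally obtained by evaluating the second DCA PL assumption at the \emph{updated} iterate $x^+$, after which smoothness of $g$ and the identification $\nabla g(x^+) = \nabla f(\bar x)$ make the bound collapse onto $\gapg$.
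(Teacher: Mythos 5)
Your proposal is correct and follows essentially the same route as the paper: both parts use the Fenchel--Young equalities at $x^+$ and $\bar x$ to collapse the gap functions into Bregman divergences, and the crucial step for part (ii) — evaluating \eqref{eq:dca_pl_2} at the updated iterate $x^+$ rather than at $\bar x$ and using $\nabla g(x^+) = \nabla f(\bar x)$ to identify $D_{f^*}(\nabla f(\bar x), \nabla f(x^+))$ with $\gapg(\nabla f(\bar x), x^+)$ — is exactly the paper's argument. The only cosmetic difference is that you route the identification through the general identity $D_f(x,y) = D_{f^*}(\nabla f(y), \nabla f(x))$ while the paper expands the conjugate Bregman divergence directly, which amounts to the same computation.
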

\begin{proof}
    ``\labelcref{lemma:dca_linear:gaps}'':
    By definition we have that $g^*(\nabla f(\bar x)) = \langle x^+,\nabla f(\bar x) \rangle - g(x^+)$ and as such
    \begin{align*}
        \gapf(\bar x, \nabla f(\bar x)) 
        &= g(\bar x) + g^\star (\nabla f(\bar x)) - \langle \bar x,\nabla f(\bar x) \rangle
        \\
        &= g(\bar x) - g(x^+) - \langle \bar x - x^+, \nabla f(\bar x) \rangle = D_g(\bar x, x^+).
    \end{align*}
    With the same arguments $f^*(\nabla f(\bar x)) = \langle \bar x,\nabla f(\bar x) \rangle - f(\bar x)$ and thus
    \begin{align*}
        \gapg(\nabla f(\bar x), x^+) 
        &= 
        f(x^+) + f^\star(\nabla f(\bar x)) - \langle x^+,\nabla f(\bar x) \rangle
        \\
        &=
        f(x^+) - f(\bar x) - \langle x^+-\bar x, \nabla f(\bar x)\rangle = D_f(x^+, \bar x).
    \end{align*}

    ``\labelcref{lemma:dca_linear:rate}'':
    By the properties of the Bregman distances, we have that $D_g(\bar x, x^+) = D_{g^*}(\nabla g(x^+), \nabla g(\bar x)) = D_{g^*}(\nabla f(\bar x), \nabla g(\bar x))$, since $x^+ \in \partial g^*(\nabla f(\bar x))$. It is thus straightforward that \eqref{eq:dca_pl_1} implies $\gapf(\bar x, \nabla f(\bar x)) \geq \eta_1(F(\bar x) - \inf F)$.

    Consider now \eqref{eq:dca_pl_2} and note that since it holds for $\bar x \in \mathcal{X}$ it also holds for $x^+ \in \partial g^*(\nabla f(\bar x))$:
    \begin{equation*}
        D_{f^*}(\nabla f(\bar x), \nabla f(x^+)) \geq \eta_2 (F(x^+)-\inf F)
    \end{equation*}
    Now, we have that
    \begin{align*}
        D_{f^*}(\nabla f(\bar x), \nabla f(x^+)) &= f^*(\nabla f(\bar x)) - f^*(\nabla f(x^+)) - \langle x^+,\nabla f(\bar x)- \nabla f(x^+)\rangle
        \\
        &= f^*(\nabla f(\bar x)) + f(x^+) - \langle x^+,\nabla f(\bar x)\rangle
        \\
        &= \gapg(\nabla f(\bar x), x^+).
    \end{align*}
    Therefore, \eqref{eq:dca_pl_2} implies $\gapg(\nabla f(\bar x), x^+) \geq \eta_2(F(x^+) - \inf F)$. Summing the two inequalities we obtain the claimed implication.
\end{proof}

\subsection{Linear rate of the proximal gradient method} \label{subsubsec:linear_pgm}
Let us now transfer to the setting of \cref{example:PGM}. As already stressed, the value function in this case is the well-studied Forward-Backward envelope, while the $\Phi$-subgradients of $f$ are the standard gradient steps $\bar y = \bar x + \tfrac{1}{L}\nabla f(\bar x)$. 
A unifying condition for the $q$-linear convergence rate of the algorithm was explored in \cite[Equation (12)]{karimi2016linear}, the so-called proximal-PL inequality: there exists an $\eta > 0$ such that
\begin{equation} \label{eq:schmidt_pl}
    \tfrac{1}{2}\mathcal{D}_g(\bar x, L) \geq \eta(F(\bar x) - \inf F),
\end{equation}
where $\mathcal{D}_g(\bar x, L) = -2L \min_{z \in \bR^n}\{-\langle \nabla f(\bar x),z-\bar x \rangle + \tfrac{L}{2}\|z-\bar x\|^2 + g(z) - g(\bar x)\} = 2L\gapf(\bar x, \bar x + \tfrac{1}{L}\nabla f(\bar x))$.
\begin{lemma} \label{lemma:pgm_linear}
    In the setting considered in this subsection, 
    let $\bar x \in \bR^n$, $\bar y = \bar x + \tfrac{1}{L}\nabla f(\bar x)$ and $x^+ \in \argmin_{x \in \bR^n}g(x)+\tfrac{L}{2}\|x-\bar y\|^2$. Then, the following hold:
    \begin{lemenum}
        \item \label{lemma:pgm_linear:gaps} The primal and dual gap functions take on the following forms:
        \begin{align*}
        &\gapf(\bar x, \bar y) 
        = g(\bar x) - \min_{x\in \bR^n} g(x) + \tfrac{L}{2}\|x - (\bar x + \tfrac{1}{L}\nabla f(\bar x))\|^2 + \tfrac{1}{2L}\|\nabla f(\bar x)\|^2
        \\
        &\gapg(\bar y, x^+) =
        f(x^+) - \min_{x \in \bR^n} \tfrac{L}{2}\|x-\bar y\|^2 + f(x) + \tfrac{L}{2}\|x^+ - \bar y\|^2 = D_{\tfrac{L}{2}\|\cdot\|^2+f}(x^+, \bar x),
        \end{align*}
        \item \label{lemma:pgm_linear:rate} The proximal-PL inequality \eqref{eq:schmidt_pl} implies that $F$ satisfies \cref{def:dca_pl} with $\mu_1 = \tfrac{\eta}{L}$ and $\mu_2 = 0$.
    \end{lemenum}
\end{lemma}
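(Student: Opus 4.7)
The plan is to verify the two items by direct computation, mirroring the approach used for the DCA case in \cref{corr:dca_linear}.

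For \labelcref{lemma:pgm_linear:gaps}, I would start from the definition of the gap function and plug in the specific coupling $\Phi(x,y) = -\tfrac{L}{2}\|x-y\|^2$. For the primal gap, I would note that
\[
g^\Phi(\bar y) = \sup_{x\in\bR^n} \bigl(-\tfrac{L}{2}\|x-\bar y\|^2 - g(x)\bigr) = -\min_{x\in\bR^n}\bigl(g(x)+\tfrac{L}{2}\|x-\bar y\|^2\bigr),
\]
and separately compute $\Phi(\bar x,\bar y) = -\tfrac{L}{2}\|\tfrac{1}{L}\nabla f(\bar x)\|^2 = -\tfrac{1}{2L}\|\nabla f(\bar x)\|^2$ using $\bar y - \bar x = \tfrac{1}{L}\nabla f(\bar x)$. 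Substituting into $\gapf(\bar x,\bar y)=g(\bar x)+g^\Phi(\bar y)-\Phi(\bar x,\bar y)$ immediately gives the claimed form. The dual gap is treated identically: the formula $f^\Phi(\bar y) = -\min_{x\in\bR^n}(f(x)+\tfrac{L}{2}\|x-\bar y\|^2)$ and the definition $\gapg(\bar y,x^+)=f(x^+)+f^\Phi(\bar y)-\Phi(x^+,\bar y)$ yield the first equality directly.

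The main (minor) obstacle is the second equality in \labelcref{lemma:pgm_linear:gaps}, which identifies $\gapg(\bar y, x^+)$ with the Bregman divergence $D_{\tfrac{L}{2}\|\cdot\|^2+f}(x^+,\bar x)$. Here I would invoke the $\Phi$-Fenchel--Young identity from \cref{thm:phi_subgradients}: since $\bar y \in \partial_\Phi f(\bar x)$ (as recalled in \cref{example:PGM}), we have $f(\bar x)+f^\Phi(\bar y)=\Phi(\bar x,\bar y)$, giving $f^\Phi(\bar y)=-f(\bar x)-\tfrac{1}{2L}\|\nabla f(\bar x)\|^2$. Then the polarization identity
\[
\tfrac{L}{2}\|x^+-\bar y\|^2 = \tfrac{L}{2}\|x^+-\bar x\|^2 - \langle \nabla f(\bar x), x^+-\bar x\rangle + \tfrac{1}{2L}\|\nabla f(\bar x)\|^2
\]
cancels the $\tfrac{1}{2L}\|\nabla f(\bar x)\|^2$ term, leaving
\[
\gapg(\bar y,x^+) = f(x^+)-f(\bar x)-\langle\nabla f(\bar x), x^+-\bar x\rangle + \tfrac{L}{2}\|x^+-\bar x\|^2,
\]
which is exactly $D_{\tfrac{L}{2}\|\cdot\|^2+f}(x^+,\bar x)$ since $\nabla(\tfrac{L}{2}\|\cdot\|^2+f)(\bar x) = L\bar x + \nabla f(\bar x)$.

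For \labelcref{lemma:pgm_linear:rate}, the implication is essentially a one-liner once \labelcref{lemma:pgm_linear:gaps} is in hand. By definition $\mathcal{D}_g(\bar x, L) = 2L\,\gapf(\bar x,\bar y)$, so the proximal-PL inequality \eqref{eq:schmidt_pl} reads $L\,\gapf(\bar x,\bar y)\geq \eta(F(\bar x)-\inf F)$, i.e., $\gapf(\bar x,\bar y)\geq \tfrac{\eta}{L}(F(\bar x)-\inf F)$. Adding the nonnegative quantity $\gapg(\bar y,x^+)\geq 0$ (which holds by \cref{thm:phi_envelope:fenchel_young}) yields precisely the $\Phi$-DCA-PL inequality of \cref{def:dca_pl} with $\mu_1=\tfrac{\eta}{L}$ and $\mu_2=0$.
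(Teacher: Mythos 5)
Your proposal is correct and follows essentially the same route as the paper: direct substitution of the quadratic coupling into the gap definitions, the Fenchel--Young identity $f(\bar x)+f^\Phi(\bar y)=\Phi(\bar x,\bar y)$ from \cref{thm:phi_subgradients} to evaluate $f^\Phi(\bar y)$, expansion of $\tfrac{L}{2}\|x^+-\bar y\|^2$ to reveal the Bregman divergence, and the observation $\mathcal{D}_g(\bar x,L)=2L\,\gapf(\bar x,\bar y)$ combined with $\gapg\geq 0$ for the PL implication. Your explicit cancellation of the $\tfrac{1}{2L}\|\nabla f(\bar x)\|^2$ terms in fact lands cleanly on $D_{\tfrac{L}{2}\|\cdot\|^2+f}(x^+,\bar x)$ as in the lemma statement, whereas the paper's displayed chain contains a sign typo in its final line (writing $-f$ instead of $+f$).
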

\begin{proof}
    ``\labelcref{lemma:pgm_linear:gaps}'': The form of the primal gap follows by definition along with $g^\Phi(\bar y) = \max_{x \in \bR^n} -\tfrac{L}{2}\|x-\bar y\|^2 - g(x) = -\min_{x \in \bR^n} g(x) + \tfrac{L}{2}\|x-\bar y\|^2$. Regarding the dual gap we have the following:
    \begin{align*}
        \gapg(\bar y, x^+) 
        &=
        f(x^+) - \min_{x \in \bR^n} \tfrac{L}{2}\|x-\bar y\|^2 + f(x) + \tfrac{L}{2}\|x^+ - \bar y\|^2
        \\
        &= 
        f(\bar x) + f(x^+) + \tfrac{L}{2}\|x^+ - \bar y\|^2 - \tfrac{L}{2}\|\bar x - \bar y\|^2
        \\
        &=
        \tfrac{L}{2}\|x^+\|^2 + f(x^+) - (\tfrac{L}{2}\|\bar x\|^2 + f(\bar x))
        - \langle x^+ - \bar x,L\bar x + \nabla f(\bar x) \rangle 
        \\
        &= D_{\tfrac{L}{2}\|\cdot\|^2-f}(x^+, \bar x),
    \end{align*}
    where the first equality follows by the definition of $f^\Phi(\bar y)$ and the second by the fact that $\bar y \in \partial_\Phi f(\bar x)$ and as such $f^\Phi(\bar y) = -\tfrac{L}{2}\|\bar x-\bar y\|^2 + f(\bar x)$ in light of the equivalence in \cref{thm:phi_subgradients}.

    ``\labelcref{lemma:pgm_linear:rate}'': The result is evident from the fact that $\mathcal{D}_g(\bar x, L) = 2L \gapf(\bar x, \bar y)$. 
\end{proof}

\subsection{Linear rate of the Bregman gradient method}
Now we consider the setting of \cref{example:BPGM}. To the best of our knowledge there does not exist a condition that implies the $q$-linear convergence of the function values for the Bregman proximal gradient algorithm. Nevertheless, in the case where $g = 0$, \cref{def:dca_pl} can be shown to incorporate the conditions studied in \cite{bauschke2019linear}. 

In order to make our presentation clearer we first need to define some notions that are standard in the Bregman gradient method analysis. The symmetry coefficient $\alpha(h)$ \cite[Definition 2.3]{bauschke2019linear} measures the lack of symmetry in the generated Bregman divergence and it has the following property:
\begin{equation} \label{eq:symmetry_co}
    \alpha(h)D_h(x, y) \leq D_h(y, x) \leq \alpha(h)^{-1}D_h(x, y) \qquad \forall x, y \in \intr \dom h.
\end{equation}
Moreover, as in \cite{bauschke2019linear} we assume that there exists a lower control function $\theta: \bR_{++} \to \bR_{++}$ with the following property:
\begin{equation} \label{eq:lower_control}
    D_h(\nabla h^*(\nabla h(\bar x) + \lambda \nabla f(\bar x)), \bar x) \geq 
    \theta(\lambda) D_h(\nabla h^*(\nabla h(\bar x)+\nabla f(\bar x)), \bar x).
\end{equation}

The condition that is analyzed in \cite[Definition 3.4]{bauschke2019linear} is the following: We say that the pair $(f,h)$ satisfies a gradient dominated condition if there exist $\eta_1 > 0$ or $\eta_2 > 0$ such that one of the two following inequalities hold:
\begin{align} \label{eq:bregman_pl_1}
    D_h(\nabla h^*(\nabla h(\bar x) + \nabla f(\bar x)), \bar x) &\geq \eta_1 (f(\bar x)-\inf f)
    \\ \label{eq:bregman_pl_2}
    D_h(\bar x, \nabla h^*(\nabla h(\bar x) + \nabla f(\bar x)) & \geq 
    \eta_2 (f(\bar x)-\inf f),
\end{align}
for all $\bar x \in \intr \dom h$. Note that the inequalities stated above cannot directly lead to a linear rate of the algorithm, since the forward step $\nabla h^*(\nabla h(\bar x) + \nabla f(\bar x))$ appearing in the Bregman divergences does not include a stepsize $\lambda > 0$. 
\begin{lemma} \label{lemma:bgm_linear}
    In the setting considered in this subsection, 
    let $\bar x \in \intr \dom h$, $\bar y = \nabla h^*(\nabla h(\bar x) + \tfrac{1}{L}\nabla f(\bar x))$ and $x^+ \in \argmin_{x \in X}g(x)+LD_h(x, \bar y)$. Then, the following hold:
    \begin{lemenum}
        \item \label{lemma:bgm_linear:gaps} The primal and dual gap functions take on the following forms:
        \begin{align*}
        &\gapf(\bar x, \bar y) 
        = g(\bar x) - \min_{x \in X} g(x) + L D_h(x, \bar y) + L D_h(\bar x, \bar y)
        \\
        &\gapg( \bar y, x^+) =
        f(x^+) - \min_{x \in X} L D_h(x, \bar y) + f(x) + L D_h(\bar x, \bar y) = D_{Lh+f}(x^+, \bar x),
        \end{align*}
        \item \label{lemma:bgm_linear:rate} If $g=0$, the gradient dominated inequality \eqref{eq:bregman_pl_1} implies that $F$ satisfies \cref{def:dca_pl} with $\mu_2 = 0$ and $\mu_1 = \tfrac{\alpha(h)\theta(\tfrac{1}{L}) \eta_1}{L}$.
    \end{lemenum}
\end{lemma}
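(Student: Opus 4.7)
\textbf{Part (i)} is largely computational. Starting from the definition $\gapf(\bar x, \bar y) = g(\bar x) + g^\Phi(\bar y) - \Phi(\bar x, \bar y)$ and substituting $\Phi(x,y) = -L D_h(x,y)$, the conjugate reads $g^\Phi(\bar y) = \sup_{x \in X}\{-L D_h(x, \bar y) - g(x)\} = -\min_{x \in X}\{g(x) + L D_h(x, \bar y)\}$, which immediately yields the first claimed expression. For the dual gap $\gapg(\bar y, x^+) = f(x^+) + f^\Phi(\bar y) + L D_h(x^+, \bar y)$, the key ingredient is the $\Phi$-Fenchel--Young identity from \cref{thm:phi_subgradients}: since $\bar y \in \partial_\Phi f(\bar x)$, one has $f^\Phi(\bar y) = \Phi(\bar x, \bar y) - f(\bar x) = -L D_h(\bar x, \bar y) - f(\bar x)$, i.e., the minimum of $f(\cdot) + L D_h(\cdot,\bar y)$ is attained at $\bar x$. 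Substitution then gives the second claimed expression.

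To collapse the result into $D_{Lh+f}(x^+, \bar x)$, I would rearrange $L D_h(x^+, \bar y) - L D_h(\bar x, \bar y)$ using the standard three-point identity for Bregman divergences, obtaining $L\bigl(h(x^+) - h(\bar x)\bigr) - L\langle \nabla h(\bar y), x^+ - \bar x\rangle$. The defining relation $\nabla h(\bar y) = \nabla h(\bar x) + L^{-1}\nabla f(\bar x)$ rewrites the linear term as $\langle L\nabla h(\bar x) + \nabla f(\bar x), x^+ - \bar x\rangle$, and combining with $f(x^+) - f(\bar x)$ reassembles the whole expression into $D_{Lh+f}(x^+, \bar x)$.

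\textbf{Part (ii)} reduces to chaining three elementary estimates once one notes that with $g = 0$ the inner minimum in the expression for $\gapf$ is attained at $x = \bar y$ and equals zero, collapsing $\gapf(\bar x, \bar y)$ to $L D_h(\bar x, \bar y)$. From there, the symmetry coefficient inequality \eqref{eq:symmetry_co} gives $L D_h(\bar x, \bar y) \geq L\alpha(h)\, D_h(\bar y, \bar x)$; the lower control function property \eqref{eq:lower_control} with $\lambda = L^{-1}$ yields $D_h(\bar y, \bar x) \geq \theta(L^{-1})\, D_h\bigl(\nabla h^*(\nabla h(\bar x) + \nabla f(\bar x)), \bar x\bigr)$; and finally \eqref{eq:bregman_pl_1} bounds this last quantity below by $\eta_1 (F(\bar x) - \inf F)$. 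Since $g = 0$ forces $\mu_2 = 0$ in the target inequality, combining the three yields \cref{def:dca_pl} with the stated $\mu_1$.

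The main obstacle is the three-point rearrangement in part (i) identifying $\gapg(\bar y, x^+)$ with $D_{Lh+f}(x^+, \bar x)$; the remainder is bookkeeping. The conceptual point of part (ii) is that the lower control function is precisely the mechanism that bridges the stepsize $L^{-1}$ appearing in $\bar y$ with the unit-step Bregman divergence featured in \eqref{eq:bregman_pl_1}.
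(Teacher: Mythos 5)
Your proposal is correct and follows essentially the same route as the paper: part (i) is the definitional computation plus the three-point/substitution argument that the paper delegates to the analogous Euclidean lemma, and part (ii) chains the lower control property, the symmetry coefficient, and the gradient dominated inequality exactly as the paper does (merely traversed in the opposite order), using the same collapse $\gapf(\bar x,\bar y)=L\,D_h(\bar x,\bar y)$ when $g=0$. Like the paper, you do not track the final constant explicitly — the chain actually produces the prefactor $L\,\alpha(h)\,\theta(\tfrac1L)\,\eta_1$ in front of the suboptimality gap rather than the stated $\tfrac{\alpha(h)\theta(\frac1L)\eta_1}{L}$ — but this discrepancy is inherited from the lemma's statement, not introduced by your argument.
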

\begin{proof}
    ``\labelcref{lemma:bgm_linear:gaps}'': The form of the primal gap follows by definition. Regarding the equality $\gapg(\bar y, x^+) = D_{Lh-f}(x^+, \bar x)$, it follows using the same algebraic manipulations as in the proof of \cref{lemma:pgm_linear:rate}.

    ``\labelcref{lemma:bgm_linear:rate}'': Starting from \eqref{eq:bregman_pl_1} and utilizing \eqref{eq:lower_control} we obtain:
    \begin{equation*}
        D_h(\nabla h^*(\nabla h(\bar x) + \tfrac{1}{L} \nabla f(\bar x)), \bar x) \geq \theta(\tfrac{1}{L}) \eta_1 (f(\bar x)-\inf f).
    \end{equation*}
    Moreover, using the inequality for the symmetry coefficient \eqref{eq:symmetry_co}, we can further write
    \begin{equation*}
        D_h(\bar x, \nabla h^*(\nabla h(\bar x) + \tfrac{1}{L} \nabla f(\bar x))) \geq \alpha(h)\theta(\tfrac{1}{L}) \eta_1 (f(\bar x)-\inf f).
    \end{equation*}
    Now, note that since $g = 0$ and $\min_x D_h(x, y) = 0$ for any $y \in \intr \dom h$, by removing the dependence on $\bar y$ the primal gap function becomes $\gapf(\bar x) = L D_h(\bar x, \nabla h^*(\nabla h(\bar x)- \tfrac{1}{L}\nabla f(\bar x)))$. Substituting this form in the inequality above we obtain the claimed result.
\end{proof}

\subsection{Linear rate of the anisotropic proximal gradient method}
Consider now the setting of \cref{example:apgm}. 
We remind that $\Phi(x,y)=-\lambda \star h(x-y)$ with $\lambda = 1/L$, while the forward step is $\bar y = \bar x - \tfrac{1}{L}\nabla h^\star(-\nabla f(\bar x))$. Note that this subsumes the Euclidean case covered above. Here we focus on the case where both functions $-f,g$ are strongly convex in the anisotropic sense \cite[Definition 5.8]{laude2022anisotropic}:
\begin{definition}
    Let $\psi \in \Gamma_0(\bR^n)$ be such that $\ran \partial \psi \supseteq \ran \nabla h$. Then we say that $\psi$ is anisotropically strongly convex relative to $h$ with constant $\eta$ if for all $(\bar x, \bar u) \in \gph \partial \psi \cap (\bR^n \times \intr \dom h^*)$ the following inequality holds true
    \begin{equation}
        \psi(x) \geq \psi(\bar x) + \tfrac{1}{\eta} \star h(x-\bar x + \eta^{-1}\nabla h^*(\bar u)) - \tfrac{1}{\eta}\star h(\eta^{-1}\nabla h^*(\bar u)), \quad \forall x \in \bR^n.
    \end{equation}
\end{definition}
In the Euclidean setting, when $g$ is strongly convex a standard technique that leads to linear convergence rate is to transfer the strong convexity to $f$ by adding quadratics. Nevertheless, such a manipulation is not applicable in the anisotropic case as can be seen by the definition of anisotropic strong convexity and thus obtaining $q$-linear rates is not trivial. Instead, one can transfer smoothness which happens to be equivalent to going from the DC-primal to the DC-dual problem \cite[Section 6]{laude2022anisotropic}.

Next we gather the results regarding the $q$-linear convergence rate of the algorithm.
\begin{lemma} \label{lemma:apgm_linear}
    In the setting considered in this subsection, 
    let $\bar x \in \bR^n$, $\bar y = \bar x - \tfrac{1}{L}\nabla h^\star(-\nabla f(\bar x))$ and $x^+ \in \argmin_{x \in \bR^n}g(x)+ \tfrac{1}{L} \star h(x-\bar y)$. Then, the following hold:
    \begin{lemenum}
        \item \label{lemma:apgm_linear:gaps} The primal and dual gap functions take on the following forms:
        \begin{align*}
        &\gapf(\bar x, \bar y) 
        = g(\bar x) - \min_{x \in \bR^n} g(x) + \tfrac{1}{L} \star h(x-\bar y) + \tfrac{1}{L} \star h(\tfrac{1}{L}\nabla h^\star(\nabla f(\bar x)))
        \\
        &\gapg(\bar y, x^+) =
        -f(\bar x) + f(x^+) + \tfrac{1}{L} \star h(x^+ - \bar y) - \tfrac{1}{L} \star h(\tfrac{1}{L}\nabla h^\star(\nabla f(\bar x)))
        \end{align*}
        \item \label{lemma:apgm_linear:rate} If $-f$ and $g$ are anisotropically strongly convex with constants $\eta_f$ and $\eta_g$, then $F$ satisfies \cref{def:dca_pl} with $\mu_1 = \tfrac{\eta_f}{L}$ and $\mu_2 = \tfrac{\eta_g}{L}$.
    \end{lemenum}
\end{lemma}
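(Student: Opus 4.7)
For part \labelcref{lemma:apgm_linear:gaps} the strategy is direct substitution. For $\gapf(\bar x,\bar y) = g(\bar x) + g^\Phi(\bar y) - \Phi(\bar x,\bar y)$, plug in $\Phi(x,y) = -\tfrac{1}{L}\star h(x-y)$ and use the formula $g^\Phi(\bar y) = -\min_{x \in \bR^n}\{g(x) + \tfrac{1}{L}\star h(x-\bar y)\}$ from the definition of the $\Phi$-conjugate. The remaining term $-\Phi(\bar x,\bar y)=\tfrac{1}{L}\star h(\bar x-\bar y)$ is then rewritten via the identity $\bar x - \bar y = \tfrac{1}{L}\nabla h^\star(-\nabla f(\bar x))$ coming from the definition of the forward step. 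For $\gapg(\bar y,x^+) = f(x^+) + f^\Phi(\bar y) - \Phi(x^+,\bar y)$, the idea is to eliminate the conjugate of $f$ using the $\Phi$-Fenchel--Young equality from \cref{thm:phi_subgradients}: since $\bar y \in \partial_\Phi f(\bar x)$, we have $f^\Phi(\bar y) = \Phi(\bar x,\bar y)-f(\bar x)$, which after substituting $\Phi$ yields exactly the stated expression.

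For part \labelcref{lemma:apgm_linear:rate} the plan is to apply the anisotropic strong convexity inequalities of both $-f$ and $g$, at the natural pair of points generated by the algorithm, evaluated at a minimizer $x^\star \in \argmin F$. Concretely, SC of $-f$ at $\bar x$ with the unique subgradient $-\nabla f(\bar x)$ gives a lower bound for $-f(x^\star)$ involving the curvature term $\tfrac{1}{\eta_f}\star h$; and first-order optimality of $x^+$ for $g(\cdot) - \Phi(\cdot,\bar y)$ produces a subgradient $v \in \partial g(x^+)$ satisfying $\nabla h^\star(v) = L(\bar y - x^+)$, which plugged into the anisotropic SC of $g$ at $x^+$ evaluated at $x^\star$ gives a lower bound for $g(x^\star)$ involving $\tfrac{1}{\eta_g}\star h$. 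Adding these two inequalities bounds $F(x^\star)$ from below by $g(x^+) - f(\bar x)$ plus the combined curvature. Using the sufficient decrease identity $\gapf + \gapg = F(\bar x) - F(x^+)$ established in \cref{thm:sufficient_descent} to rewrite $g(x^+) - f(\bar x) = F(x^+) + f(x^+) - f(\bar x)$, and rearranging, one arrives at the $\Phi$-DCA-PL inequality of \cref{def:dca_pl} with $\mu_1 = \eta_f/L$ and $\mu_2 = \eta_g/L$.

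The delicate part is the bookkeeping required to extract the clean factors $\eta_f/L$ and $\eta_g/L$. The arguments of the $\tfrac{1}{\eta_f}\star h$ and $\tfrac{1}{\eta_g}\star h$ curvature terms are nonlinear combinations of $x^\star, \bar x, x^+$ and the gradients; matching them against the $\tfrac{1}{L}\star h$ terms that appear in the gap functions (from part \labelcref{lemma:apgm_linear:gaps}) requires simultaneously exploiting the two structural identities $L(\bar x - \bar y) = \nabla h^\star(-\nabla f(\bar x))$ and $L(\bar y - x^+) = \nabla h^\star(v)$ together with the convexity of $h$ and the scaling behaviour of the epi-scaling $\lambda \star h$. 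As a sanity check, in the Euclidean specialization $h(\cdot) = \tfrac{1}{2}\|\cdot\|^2$ the argument collapses to the standard linear-rate proof of PGM under joint strong convexity of $-f$ and $g$, and the combined bound $(L-\eta_f)(F(\bar x) - \inf F) \geq (L+\eta_g)(F(x^+) - \inf F)$ implied by \cref{thm:clasic_linear_rate} is recovered.
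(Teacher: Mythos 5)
Part \labelcref{lemma:apgm_linear:gaps} of your plan is fine and matches the paper, which indeed obtains both formulas by direct substitution of $\Phi(x,y)=-\tfrac{1}{L}\star h(x-y)$ into the definitions together with the Fenchel--Young equality $f^\Phi(\bar y)=\Phi(\bar x,\bar y)-f(\bar x)$ from \cref{thm:phi_subgradients}.

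For part \labelcref{lemma:apgm_linear:rate}, however, your route is not the paper's, and it has a genuine gap. First, a concrete error: optimality of $x^+$ for $g(\cdot)+\tfrac{1}{L}\star h(\cdot-\bar y)$ gives $v=-\nabla h(L(x^+-\bar y))\in\partial g(x^+)$, hence $\nabla h^\star(v)=\nabla h^\star(-\nabla h(L(x^+-\bar y)))$, which equals $L(\bar y-x^+)$ only when $\nabla h$ is odd; for a general Legendre $h$ your key structural identity fails. Second, and more fundamentally, the step you defer to ``delicate bookkeeping'' --- combining the curvature terms $\tfrac{1}{\eta_f}\star h$ and $\tfrac{1}{\eta_g}\star h$ (centred at nonlinear combinations of $x^\star,\bar x,x^+$ and the gradients) with the $\tfrac{1}{L}\star h$ terms in the gap functions --- is precisely the obstruction the paper flags just before the lemma: unlike quadratics, epi-scaled copies of a general $h$ evaluated at different arguments cannot be recombined by completing the square, so transferring the strong convexity of $g$ directly in the primal does not go through. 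Your Euclidean sanity check is reassuring but does not probe this, since the obstruction vanishes exactly for $h=\tfrac{1}{2}\|\cdot\|^2$. The paper instead handles $-f$ by invoking \cite[Proposition 5.10]{laude2022anisotropic} (anisotropic strong convexity implies a PL inequality for the scaled primal gap, giving $L\gapf(\bar x,\bar y)\geq\eta_f(F(\bar x)-\inf F)$), and handles $g$ by passing to the DC-dual problem \cref{eq:dual}: by \cite[Proposition 6.8]{laude2022anisotropic}, $-g^\Phi$ is anisotropically smooth with constant $L$ and anisotropically strongly convex with constant $\sigma=(L^{-1}+\eta_g^{-1})^{-1}$, so the same Proposition 5.10 applied to the dual yields $L\gapg(\bar y,x^+)\geq\sigma(G(\bar y)-\inf G)$; the identity $G(\bar y)=F(x^+)+\gapg(\bar y,x^+)$ and $\inf G=\inf F$ then convert this into $\gapg(\bar y,x^+)\geq\tfrac{\eta_g}{L}(F(x^+)-\inf F)$, and summing gives the claim. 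If you want to avoid the citations you would need to reprove that gap-PL implication and the smoothness/strong-convexity duality, not merely add the two subgradient inequalities at $x^\star$.
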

\begin{proof}
    ``\labelcref{lemma:apgm_linear:gaps}'':
    The forms of the gap functions directly follow from their definitions.

    ``\labelcref{lemma:apgm_linear:rate}'':
    First note that the gap function studied in \cite[Equation 5.1]{laude2022anisotropic} is a scaled version of the one considered in this paper. More presicely, for stepsize $\lambda = \tfrac{1}{L}$, it is defined as $\mathcal{G}_F^h (\bar x, L) := L(F(\bar x) - \valf(\bar x, \bar y)) = L\gapf(\bar x, \bar y)$. Therefore, by \cite[Proposition 5.10]{laude2022anisotropic} we have that
    \begin{equation}
        \eta_f (F(\bar x) - \inf F) \leq L\gapf(\bar x, \bar y).
    \end{equation}
    Moreover, by \cite[Proposition 6.8]{laude2022anisotropic} we have that $-g^\Phi$ is anisotropically smooth with constant $L$ and anisotropically strongly convex with constant $\sigma = \frac{1}{L^{-1} + \eta_g^{-1}}$. Again invoking \cite[Proposition 5.10]{laude2022anisotropic} for the DC-dual problem \cref{eq:dual} we obtain since by \cite[Proposition 6.7]{laude2022anisotropic} $f^\Phi$ is convex, proper and lsc that
    \begin{equation*}
        L\gapg(\bar y, x^+) \geq \sigma(G(\bar y) - \inf G).
    \end{equation*}
    Moreover, we have that $G(\bar y) = F(x^+) + \gapg(\bar y, x^+)$ and thus the inequality above becomes
    \begin{equation*}
        (1-\tfrac{\sigma}{L})\gapg(\bar y, x^+) \geq \tfrac{\sigma}{L}(F(x^+)-\inf F),
    \end{equation*}
    which after some simple algebra implies that
    \begin{equation*}
        \gapg(\bar y, x^+) \geq \tfrac{\eta_g}{L}(F(x^+)-\inf F),
    \end{equation*}
    Summing the two inequalities yields now yields the claimed result.
\end{proof}
The previous result improves upon \cite[Corollary 6.11]{laude2022anisotropic}, where only an $r$-linear convergence rate was shown when $g$ is anisotropically strongly convex.

\section{Conclusion}
\label{sec:conclusion}
We have introduced the $\Phi$-DCA, an algorithmic framework that unifies forward-backward splitting methods based on the notion of generalized convexity, and studied its convergence properties. Our framework allows for a simplified, yet tight analysis and leads to new insights into popular methods that are widely used in practice. In future work we plan to address the following issues: extend the framework to the stochastic setup, introduce a general $\Phi$-subgradient descent scheme akin to the classical convex subgradient descent method and study the guaranteeing saddle point avoidance, in the spirit of \cite{lee2019first}.

\printbibliography

@string{icml = {International Conference on Machine Learning (ICML)}}

@article{bredies2008forward,
  title={A forward--backward splitting algorithm for the minimization of non-smooth convex functionals in Banach space},
  author={Bredies, Kristian},
  journal={Inverse Problems},
  volume={25},
  number={1},
  pages={015005},
  year={2008},
  publisher={IOP Publishing}
}

@article{blondel2022learning,
  title={Learning energy networks with generalized fenchel-young losses},
  author={Blondel, Mathieu and Llinares-L{\'o}pez, Felipe and Dadashi, Robert and Hussenot, L{\'e}onard and Geist, Matthieu},
  journal={Advances in Neural Information Processing Systems},
  volume={35},
  pages={12516--12528},
  year={2022}
}

@article{nesterov2023inexact,
  title={Inexact accelerated high-order proximal-point methods},
  author={Nesterov, Yurii},
  journal={Mathematical Programming},
  pages={1--26},
  year={2023},
  publisher={Springer}
}

@inproceedings{laude2019optimization,
  title={Optimization of inf-convolution regularized nonconvex composite problems},
  author={Laude, Emanuel and Wu, Tao and Cremers, Daniel},
  booktitle={The 22nd International Conference on Artificial Intelligence and Statistics},
  pages={547--556},
  year={2019},
  organization={PMLR}
}

@article{oikonomidis2023global,
  title={Global Convergence Analysis of the Power Proximal Point and Augmented Lagrangian Method},
  author={Oikonomidis, Konstantinos A and Bodard, Alexander and Laude, Emanuel and Patrinos, Panagiotis},
  journal={arXiv preprint arXiv:2312.12205},
  year={2023}
}

@article{aze1995uniformly,
  title={Uniformly convex and uniformly smooth convex functions},
  author={Az{\'e}, Dominique and Penot, Jean-Paul},
  journal={Annales de la Facult{\'e} des sciences de Toulouse: Math{\'e}matiques},
  volume={4},
  number={4},
  pages={705--730},
  year={1995}
}

@inproceedings{zhanggradient,
  title={Why Gradient Clipping Accelerates Training: A Theoretical Justification for Adaptivity},
  author={Zhang, Jingzhao and He, Tianxing and Sra, Suvrit and Jadbabaie, Ali},
  booktitle={International Conference on Learning Representations},
  year={2020}
}

@misc{gorbunov2024methods,
    title={Methods for Convex $(L_0,L_1)$-Smooth Optimization: Clipping, Acceleration, and Adaptivity},
    author={Eduard Gorbunov and Nazarii Tupitsa and Sayantan Choudhury and Alen Aliev and Peter Richtárik and Samuel Horváth and Martin Takáč},
    year={2024},
    eprint={2409.14989},
    archivePrefix={arXiv},
    primaryClass={math.OC}
}

@inproceedings{kunstner2021homeomorphic,
  title={Homeomorphic-invariance of em: Non-asymptotic convergence in kl divergence for exponential families via mirror descent},
  author={Kunstner, Frederik and Kumar, Raunak and Schmidt, Mark},
  booktitle={International Conference on Artificial Intelligence and Statistics},
  pages={3295--3303},
  year={2021},
  organization={PMLR}
}

@article{teboulle1992entropic,
  title={Entropic proximal mappings with applications to nonlinear programming},
  author={Teboulle, Marc},
  journal={Mathematics of Operations Research},
  volume={17},
  number={3},
  pages={670--690},
  year={1992},
  publisher={INFORMS}
}

@inproceedings{doikov2020inexact,
  title={Inexact Tensor Methods with Dynamic Accuracies.},
  author={Doikov, Nikita and Nesterov, Yurii E},
  booktitle={ICML},
  pages={2577--2586},
  year={2020}
}

@book{dontchev2009implicit,
  title={Implicit functions and solution mappings: A view from variational analysis},
  author={Dontchev, Asen and Rockafellar, R Tyrrell},
  volume={616},
  year={2009},
  publisher={Springer}
}

@incollection{hiriart1989convex,
  title={From convex optimization to nonconvex optimization. Necessary and sufficient conditions for global optimality},
  author={Hiriart-Urruty, J-B},
  booktitle={Nonsmooth optimization and related topics},
  pages={219--239},
  year={1989},
  publisher={Springer}
}

@article{aubin2022mirror,
  title={Mirror descent with relative smoothness in measure spaces, with application to sinkhorn and em},
  author={Aubin-Frankowski, Pierre-Cyril and Korba, Anna and L{\'e}ger, Flavien},
  journal={Advances in Neural Information Processing Systems},
  volume={35},
  pages={17263--17275},
  year={2022}
}

@misc{wang2024emparameterlearningframework,
      title={EM++: A parameter learning framework for stochastic switching systems}, 
      author={Renzi Wang and Alexander Bodard and Mathijs Schuurmans and Panagiotis Patrinos},
      year={2024},
      eprint={2407.16359},
      archivePrefix={arXiv},
      primaryClass={math.OC},
      url={https://arxiv.org/abs/2407.16359}, 
}

@article{lee2019first,
  title={First-order methods almost always avoid strict saddle points},
  author={Lee, Jason D and Panageas, Ioannis and Piliouras, Georgios and Simchowitz, Max and Jordan, Michael I and Recht, Benjamin},
  journal={Mathematical programming},
  volume={176},
  pages={311--337},
  year={2019},
  publisher={Springer}
}

@article{oikonomidis2025nonlinearly,
  title={Nonlinearly Preconditioned Gradient Methods under Generalized Smoothness},
  author={Oikonomidis, Konstantinos and Quan, Jan and Laude, Emanuel and Patrinos, Panagiotis},
  journal={arXiv preprint arXiv:2502.08532},
  year={2025}
}

@inproceedings{karimi2016linear,
	author = {Karimi, Hamed and Nutini, Julie and Schmidt, Mark},
	booktitle = {Joint European conference on machine learning and knowledge discovery in databases},
	organization = {Springer},
	pages = {795--811},
	title = {Linear convergence of gradient and proximal-gradient methods under the polyak-{\l}ojasiewicz condition},
	year = {2016}}

@article{tao1997convex,
	author = {Tao, Pham Dinh and An, LT Hoai},
	journal = {Acta mathematica vietnamica},
	number = {1},
	pages = {289--355},
	title = {Convex analysis approach to DC programming: theory, algorithms and applications},
	volume = {22},
	year = {1997}}

@article{laude2023dualities,
  title={Dualities for non-Euclidean smoothness and strong convexity under the light of generalized conjugacy},
  author={Laude, Emanuel and Themelis, Andreas and Patrinos, Panagiotis},
  journal={SIAM Journal on Optimization},
  volume={33},
  number={4},
  pages={2721--2749},
  year={2023},
  publisher={SIAM}
}

@article{bui2021zero,
	author = {Hoa T. Bui and Regina S. Burachik and Alexander Y. Kruger and David T. Yost},
	date-added = {2021-12-15 17:21:08 +0100},
	date-modified = {2021-12-15 18:34:39 +0100},
	journal = {Optimization},
	number = {0},
	pages = {1-37},
	publisher = {Taylor & Francis},
	title = {Zero duality gap conditions via abstract convexity},
	volume = {0},
	year = {2021}}

@article{devolder2014first,
  title={First-order methods of smooth convex optimization with inexact oracle},
  author={Devolder, Olivier and Glineur, Fran{\c{c}}ois and Nesterov, Yurii},
  journal={Mathematical Programming},
  volume={146},
  pages={37--75},
  year={2014},
  publisher={Springer}
}

@article{balder1977extension,
	author = {Balder, Erik Jan},
	journal = {SIAM Journal on Control and Optimization},
	number = {2},
	pages = {329--343},
	publisher = {SIAM},
	title = {An extension of duality-stability relations to nonconvex optimization problems},
	volume = {15},
	year = {1977}}

@article{bauschke2017descent,
	author = {Bauschke, Heinz H. and Bolte, J{\'e}r{\^o}me and Teboulle, Marc},
	journal = {Mathematics of Operations Research},
	number = {2},
	pages = {330--348},
	publisher = {Informs},
	title = {A descent lemma beyond {L}ipschitz gradient continuity: first-order methods revisited and applications},
	volume = {42},
	year = {2017}}

@misc{léger2023gradient,
      title={Gradient descent with a general cost}, 
      author={Flavien Léger and Pierre-Cyril Aubin-Frankowski},
      year={2023},
      eprint={2305.04917},
      archivePrefix={arXiv},
      primaryClass={math.OC}
}

@article{dolecki1978convexity,
	author = {Dolecki, Szymon and Kurcyusz, Stanis{\l}aw},
	journal = {SIAM Journal on Control and Optimization},
	nodoi = {10.1137/0316018},
	number = {2},
	pages = {277-300},
	title = {On {$\Phi$}-Convexity in Extremal Problems},
	volume = {16},
	year = {1978},
	bdsk-url-1 = {https://doi.org/10.1137/0316018}}

@article{lu2018relatively,
	author = {Lu, Haihao and Freund, Robert M. and Nesterov, Yurii},
	journal = {SIAM Journal on Optimization},
	number = {1},
	pages = {333--354},
	publisher = {SIAM},
	title = {Relatively smooth convex optimization by first-order methods, and applications},
	volume = {28},
	year = {2018}}

@article{moreau1970inf,
	author = {Moreau, Jean-Jacques},
	date-modified = {2022-09-25 08:39:37 +0200},
	journal = {Journal de Math{\'e}matiques Pures et Appliqu{\'e}es},
	pages = {33--41},
	title = {Inf-convolution, sous-additivit{\'e}, convexit{\'e} des fonctions num{\'e}riques},
	year = {1970}}

@incollection{penot1990strongly,
	author = {Penot, Jean-Paul and Volle, Michel},
	booktitle = {Generalized convexity and fractional programming with economic applications},
	pages = {198--218},
	publisher = {Springer},
	title = {On strongly convex and paraconvex dualities},
	year = {1990}}

@book{RoWe98,
	address = {New York},
	author = {Rockafellar, Ralph T. and Wets, Roger J.B.},
	publisher = {Springer},
	title = {Variational Analysis},
	year = {1998}}

@article{rockafellar1974augmented,
	author = {Rockafellar, R. Tyrrell},
	date-modified = {2022-06-06 15:33:01 +0200},
	journal = {SIAM Journal on Optimization},
	number = {2},
	pages = {268--285},
	title = {Augmented {L}agrange multiplier functions and duality in nonconvex programming},
	volume = {12},
	year = {1974}}

@article{bauermeister2021lifting,
	author = {Bauermeister, Hartmut and Laude, Emanuel and M{\"o}llenhoff, Thomas and Moeller, Michael and Cremers, Daniel},
	journal = {SIAM Journal on Imaging Sciences},
	number = {3},
	pages = {1253--1281},
	publisher = {SIAM},
	title = {Lifting the convex conjugate in Lagrangian relaxations: a tractable approach for continuous Markov random fields},
	volume = {15},
	year = {2022}}

@phdthesis{laude2021lower,
	author = {Laude, Emanuel},
	school = {Technical University of Munich},
	title = {Lower envelopes and lifting for structured nonconvex optimization},
	year = {2021}}

@article{toland1978duality,
  title={Duality in nonconvex optimization},
  author={Toland, John F},
  journal={Journal of Mathematical Analysis and Applications},
  volume={66},
  number={2},
  pages={399--415},
  year={1978},
  publisher={Elsevier}
}

@book{Vil08,
	author = {C. Villani},
	date-added = {2021-04-07 17:16:06 +0200},
	date-modified = {2021-04-07 17:16:06 +0200},
	publisher = {Springer},
	title = {Optimal Transport: Old and New},
	year = {2008}}

@article{stella2017forward,
	author = {Stella, Lorenzo and Themelis, Andreas and Patrinos, Panagiotis},
	journal = {Computational Optimization and Applications},
	number = {3},
	pages = {443--487},
	publisher = {Springer},
	title = {Forward--backward quasi-Newton methods for nonsmooth optimization problems},
	volume = {67},
	year = {2017}}

@article{bolte2020ah,
  title={Ah$\backslash$" olderian backtracking method for min-max and min-min problems},
  author={Bolte, J{\'e}r{\^o}me and Glaudin, Lilian and Pauwels, Edouard and Serrurier, Mathieu},
  journal={arXiv preprint arXiv:2007.08810},
  year={2020}
}

@article{nesterov2006cubic,
  title={Cubic regularization of Newton method and its global performance},
  author={Nesterov, Yurii and Polyak, Boris T},
  journal={Mathematical Programming},
  volume={108},
  number={1},
  pages={177--205},
  year={2006},
  publisher={Springer}
}

@article{mordukhovich2022convex,
  title={Convex analysis and beyond},
  author={Mordukhovich, Boris S and Nam, Nguyen Mau},
  journal={Basic Theory},
  volume={1},
  year={2022},
  publisher={Springer}
}

@article{amari1998natural,
  title={Natural gradient works efficiently in learning},
  author={Amari, Shun-Ichi},
  journal={Neural computation},
  volume={10},
  number={2},
  pages={251--276},
  year={1998},
  publisher={MIT Press}
}

@article{bachir2017krein,
  title={On the Krein--Milman--Ky Fan theorem for convex compact metrizable sets},
  author={Bachir, Mohammed},
  journal={Illinois Journal of Mathematics},
  volume={61},
  number={1-2},
  pages={1--24},
  year={2017},
  publisher={Duke University Press}
}

@book{Roc70,
	address = {New Jersey},
	author = {Rockafellar, R. Tyrrell},
	date-added = {2018-05-16 17:24:24 +0000},
	date-modified = {2022-06-06 15:33:15 +0200},
	publisher = {Princeton University Press},
	title = {Convex Analysis},
	year = {1970}}

@article{bolte2018first,
  title={First order methods beyond convexity and Lipschitz gradient continuity with applications to quadratic inverse problems},
  author={Bolte, J{\'e}r{\^o}me and Sabach, Shoham and Teboulle, Marc and Vaisbourd, Yakov},
  journal={SIAM Journal on Optimization},
  volume={28},
  number={3},
  pages={2131--2151},
  year={2018},
  publisher={SIAM}
}

@article{bauschke2019linear,
	author = {Heinz H. Bauschke and J{\'{e}}r{\^{o}}me Bolte and Jiawei Chen and Marc Teboulle and Xianfu Wang},
	date-modified = {2022-10-26 17:46:24 +0200},
	journal = {Journal of Optimization Theory and Applications},
	number = {3},
	pages = {1068--1087},
	title = {On Linear Convergence of Non-{Euclidean} Gradient Methods without Strong Convexity and {Lipschitz} Gradient Continuity},
	volume = {182},
	year = {2019},
	bdsk-url-1 = {https://doi.org/10.1007/s10957-019-01516-9}}

@misc{laude2022anisotropic,
      title={Anisotropic Proximal Gradient}, 
      author={Emanuel Laude and Panagiotis Patrinos},
      year={2022},
      eprint={2210.15531},
      archivePrefix={arXiv},
      primaryClass={math.OC}
}

@article{horst1999dc,
  title={DC programming: overview},
  author={Horst, Reiner and Thoai, Nguyen V},
  journal={Journal of Optimization Theory and Applications},
  volume={103},
  pages={1--43},
  year={1999},
  publisher={Springer}
}

@book{beck2017first,
  title={First-order methods in optimization},
  author={Beck, Amir},
  year={2017},
  publisher={SIAM}
}

@article{fajardo2022subdifferentials,
  title={On subdifferentials via a generalized conjugation scheme: an application to DC problems and optimality conditions},
  author={Fajardo, Maria Dolores and Vidal, Jos{\'e}},
  journal={Set-Valued and Variational Analysis},
  volume={30},
  number={4},
  pages={1313--1331},
  year={2022},
  publisher={Springer}
}

@inproceedings{faust2023bregman,
  title={A Bregman Divergence View on the Difference-of-Convex Algorithm},
  author={Faust, Oisin and Fawzi, Hamza and Saunderson, James},
  booktitle={International Conference on Artificial Intelligence and Statistics},
  pages={3427--3439},
  year={2023},
  organization={PMLR}
}
\end{document}